\begin{document}

\title{A stable rank filtration on direct sum $K$-theory}
\author[Campbell]{Jonathan Campbell}
\author[Zakharevich]{Inna Zakharevich}
\author[]{Appendix by Alexander Kupers}
\date{\today}
\maketitle

\begin{abstract}
  In the literature, there are two standard rank filtrations on $K$-theory: an
  ``unstable'' one which is traditionally defined through the homology of
  $GL_n$, and a ``stable'' one which was defined by Rognes using the simplicial
  structure on Waldhausen's $S_\dotp$-construction.  In this paper we give an
  alternate stable rank filtration, which uses the simplicial structure present
  in a $\Gamma$-space construction of $K$-theory; we investigate this in the
  case of ``convenient addition categories,'' and show that in good situtations
  where a notion of ``rank'' is present, the filtration quotients will be
  homotopy coinvariants of certain highly-connected suspension spectra.  This
  approach generalizes Rognes's results on the common basis complex, and
  produces an alternate spectral sequences converging to the homology of
  algebraic $K$-theory.
\end{abstract}

\section*{Introduction}

In \cite{quillen73-fingen}, Quillen showed that the groups $K_i(R)$, for $R$ a ring of algebraic integers, are finitely generated.  To prove this he used a \emph{rank filtration}: he observed that for the category $\Mod_R$ of finitely-generated projective $R$-modules, the category $Q\Mod_R$ had a natural filtration by dimension.  He was able to compute the homologies of the associated graded pieces, and use this to show that the $K$-groups of rings of algebraic integers are finitely generated.  This computation also connected algebraic $K$-groups to the homology of Lie groups made discrete.  However, this filtration has one key failure: it is \emph{unstable}, in that it does not extend to the spectral models of algebraic $K$-theory.


In \cite{rognes}, John Rognes constructed a rank filtration on the algebraic
$K$-theory of a commutative ring in such a way that the filtration existed on
the $K$-theory \emph{spectrum}, not just on the $K$-groups or the $K$-theory
space.  Moreover, Rognes was able to give an explicit model of the filtration quotients (as spectra), and analyzes them as the homotopy orbits of general linear groups acting on certain spectra associated to decompositions of free modules.  To understand these spectra, Rognes constructed a second filtration (the
\emph{poset} filtration) in order to show that the filtered quotients of the
rank filtration are equivalent to the homotopy coinvariants of a suspension
spectrum.  These results led Rognes to be able to show that $K_4(\Z) = 0$.  Rognes described the suspension spectrum using the \emph{common basis complex}: a simplicial complex whose $k$-simplices are sets of submodules of $R^n$ for which there exists a common basis.  Although this simplicial complex has dimension $2^n-3$, Rognes showed that its homology is restricted to dimensions $n-1$ through $2n-2$.  He conjectured that this homology was actually concentrated in degree $2n-2$; this was later shown for fields (and other nice rings) in \cite{galatius_kupers_randal-williams, miller_patzt_wilson}.


In this paper we construct an analog of this filtration for situations where
we are trying to analyze the $K$-theory of a particularly nice class of symmetric monoidal categories we call \emph{convenient addition categories} (see Definition~\ref{def:convenient}). We begin by defining the notion of a filtration on a spectrum $X$ indexed over a general poset $\P$. 
Using some generalizations of Rognes's techniques we are able to show the following:
\begin{maintheorem}[See Theorem~\ref{thm:graded-part-SA}]
    Let $\A$ be a convenient addition category equipped with a functor $\nu:\A \rto \P$ for a poset $\P$; suppose further that $\nu$ reflects isomorphisms.  There is a filtration $F_p$ (indexed over $p\in \P$) on the spectrum $K(\A)$.  Let $\Decomp_A^\circ$ be the poset of nontrivial decompositions of $A$ (see Definition~\ref{def:dec}).  Then
    \[F_pK(\A)/F_{<p}K(\A) \simeq K(S_A)_{h\Aut(A)}\]
    where $A\in \A$ is such that $\nu(A) = p$, and where $S_A$ is a certain $\Gamma$-space.
\end{maintheorem}

In order to analyze $K(S_A)$ we construct a valuation on $S_A$ indexed over the partial order $\Decomp_A$ of decompositions of $A$ in $\A$ (see Definition~\ref{def:dec}) and lift this valuation to $K(S_A)$. It turns out that $K(S_A)$ satisfies the conditions of the following technical theorem: 
\begin{maintheorem}[See Corollary \ref{cor:filtered-spectrum-susp}]
    Let $\P^\circ$ be a cosieve in $\P$.  Let $X$ be a symmetric spectrum of
  simplicial sets equipped with a $\P$-valuation $\nu$, and suppose that for all $n$,
  if $A\notin \P^\circ$, then $F_{\nu(A)}X \simeq S^n$ and if
  $A \in\P^\circ$, then $F_{\nu(A)}X$ is $2n-1$-connected.  Then,
  \[X \simeq \Sigma^\infty \Sigma' N\P^\circ.\]
\end{maintheorem}

Applying the above to the case of $K(S_A)$ gives the following:
\begin{maintheorem}[See Corollary~\ref{cor:KSA}]
Let $\Decomp_A^\circ$ be the poset of nontrivial decompositions of $A$ (see Definition~\ref{def:dec}).  Then, $\Aut(A)$-equivariantly,
    \[K(S_A) \simeq \Sigma^\infty \Sigma' N \Decomp_A^\circ,\]
    where $\Sigma' X = \Sigma X$ if $X$ is nonempty, and $\Sigma \emptyset = S^0$.
\end{maintheorem}

We use these theorems to give a new presentation of Rognes's proof of the Barratt--Priddy--Quillen Theorem, as well as to give new spectral sequences converging to the rationalized $K$-groups of commutative rings $R$ and of inner product spaces over an ordered field (see Section~\ref{sec:applications}).  Moreover, we explain how to generalize this analysis to Waldhausen categories so as to recover an alternate model of Rognes's common basis complex:
\begin{maintheorem}[See Section~\ref{sec:rogneswald}]
    Let $R$ be a commutative ring, and $C_{R,n}$ be Rognes's common basis complex for $R^n$.  Let $\P^\circ$ be the poset of nontrivial minimal spanning posets (see Definition~\ref{def:spanning}).  Then
    \[\Sigma^\infty \Sigma' C_n \simeq \Sigma^\infty \Sigma' N\P^\circ.\]
\end{maintheorem}

\subsection*{Organization} This paper is organized as follows.  In
Section~\ref{sec:valuation} we introduce the notion of a $\P$-valuation on a
$\Set$-valued functor, and use it to analyze the $K$-theory of a $\Gamma$-set.
In Section~\ref{sec:convenient} we introduce convenient addition categories and
their $K$-theory, and show that a valuation of a convenient addition category
produces a valuation on its $K$-theory.  In Section~\ref{sec:subobjects} we
consider a \emph{subobject structure} and show that the filtered quotients of
certain valuations can be analyzed as homotopy coinvariants of the $K$-theories
of subobject structures; we then use a new \emph{decomposition} valuation to
show that the $K$-theory of a subobject structure is a suspension spectrum.  In
Section~\ref{sec:applications} we give the mentioned applications of these results.

\subsection*{Notation and conventions}

$\mathbf{n}\in \FinSet_*$ is the set $\{*,1,\ldots,n\}$.

$S^1$ is the pointed simplicial set with $(S^1)_n = \mathbf{n}$.  $S^n =
(S^1)^{\wedge n}$.

For a $\Gamma$-space (or $\Gamma$-set) $X$ and a simplicial pointed finite set
$L: \Delta^\op \rto \FinSet_*$, we write $X(L)$ for the simplicial space
(resp. set) $X \circ L$.

For a pointed set $S$, write $S^\circ \defeq S \backslash \{*\}$.

A \emph{preorder} is a category where $|\Hom(A,B)| \leq 1$ for all $A,B$.  A
\emph{poset} is a skeletal preorder---i.e. a preorder in which all isomorphisms
are identity morphisms.  This definition agrees (in the case of small posets)
with the more classical definition of a poset---that of a set with an ordering
placed on it.  We will sometimes write $A \leq B$ for objects $A,B$ for which
$|\Hom(A,B)| = 1$.

\subsection*{Acknowledgements} The authors would like to thank Daniel Dugger,
who kindly pointed us the proof of Theorem~\ref{thm:nconn}, and Oscar Randal--Williams and Alexander Kupers for answering the question that originally led to Appendix~\ref{app:kupers}.  Zakharevich was supported in part by NSF CAREER DMS-1846767 and by a Simons Fellowship. Kupers acknowledges the support of the Natural Sciences and Engineering Research Council of Canada (NSERC) [funding reference number 512156 and 512250], and was supported by an Alfred P.~Sloan Research Fellowship.

\section{Valuations and filtrations on $\Set$-valued functors} \label{sec:valuation}

\subsection{Basic definitions}

The definition of a category of elements and the related results are standard;
we include them in order to standardize variances and notation, as well as for
ease of reading. See for example \cite[Section 8.3]{Borceux_1994} for a more
general introduction.

\begin{definition}
  Let $F: \I \rto \Set$ be any functor.  The \emph{category of elements} of $F$
  is the category $\El_\I F$ with
  \begin{description}
  \item[objects] The objects consist of pairs $(i,x)$, with $i\in \ob\I$ and
    $x\in F(i)$.  When it is clear from context we often denote objects of
    $\El_\I F$ as simply $x$, with $i$ suppressed.
  \item[morphisms]
    \[\Hom_{\El_\I F}\big((i,x),(j,y)\big) \defeq \{\phi \in \Hom_\I(j,i) \,|\,
      F(\phi)(y) = x\}.\]
  \end{description}
  When $\I$ is clear from context, or in the special case when
  $\I = \Delta^\op$, we write simply $\El F$ instead of $\El_{\I} F$.
\end{definition}

The variance here is chosen so that when $X$ is a simplicial set, a morphism
points from faces to the simplices that they are faces of, so that morphisms
point from ``smaller'' to ``larger'' for the face maps.  This will agree with
the variance that we want once we introduce a valuation.

\begin{lemma} \label{lem:precompose-int}
  Any functor $G:\mathcal{J} \rto \I$ induces a functor $\El_{\mathcal{J}} FG
  \rto \El_\I F$.  
\end{lemma}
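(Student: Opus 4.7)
The plan is to construct the induced functor explicitly and then verify it is a functor by a direct unwinding of the definitions.

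First I would define the candidate functor $\widetilde{G}: \El_{\mathcal{J}} FG \rto \El_\I F$ on objects by $(j, x) \mapsto (G(j), x)$; this makes sense because $x \in FG(j) = F(G(j))$, so the pair $(G(j), x)$ is a legitimate object of $\El_\I F$. For morphisms, a map $(j, x) \rto (j', x')$ in $\El_{\mathcal{J}} FG$ is by definition a morphism $\phi \in \Hom_{\mathcal{J}}(j', j)$ with $FG(\phi)(x') = x$; I would send it to $G(\phi) \in \Hom_\I(G(j'), G(j))$.

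Next I would check that $G(\phi)$ really is a morphism $(G(j), x) \rto (G(j'), x')$ in $\El_\I F$. Unwinding the definition of $\El_\I F$, this requires $F(G(\phi))(x') = x$, which is literally the equation $FG(\phi)(x') = x$ defining $\phi$ as a morphism in $\El_{\mathcal{J}} FG$. Finally, functoriality (preservation of identities and of composition) is immediate from the functoriality of $G$, since $\widetilde{G}$ is essentially $G$ on the morphism sets after forgetting the constraint equations.

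There is no real obstacle; the only thing that requires attention is variance, since the paper's convention is that a morphism $(i,x) \rto (j,y)$ in $\El_\I F$ comes from a morphism $j \rto i$ in $\I$. I would simply be careful at the step where I identify the source and target in $\El_\I F$ of the image of $\phi$: since $\phi: j' \rto j$ in $\mathcal{J}$, its image $G(\phi): G(j') \rto G(j)$ yields a morphism in $\El_\I F$ going from $(G(j), x)$ to $(G(j'), x')$, matching $\widetilde{G}$'s action on objects.
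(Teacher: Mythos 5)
Your construction is correct and is exactly the routine verification the paper leaves implicit (the lemma is stated without proof there): define the functor by $(j,x)\mapsto (G(j),x)$ and $\phi\mapsto G(\phi)$, note the constraint $F(G(\phi))(x')=x$ is literally the defining equation of $\phi$ as a morphism in $\El_{\mathcal{J}} FG$, and inherit functoriality from $G$. Your attention to the variance convention is the right thing to check, and you handle it correctly.
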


\begin{lemma}
  Let $\alpha: F \Rto G$ be a natural transformation of functors $\I \rto
  \Set$.  Then $\alpha$ induces a functor
  \[\El \alpha: \El F \rto \El G.\]
\end{lemma}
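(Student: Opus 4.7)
The plan is to construct $\El\alpha$ explicitly on objects and morphisms, and then verify functoriality. On objects, I would define
\[(\El\alpha)(i,x) \defeq \bigl(i,\alpha_i(x)\bigr),\]
which is a well-defined object of $\El G$ since $\alpha_i:F(i)\rto G(i)$. On morphisms, given $\phi:(i,x)\rto(j,y)$ in $\El F$, i.e.\ $\phi\in\Hom_\I(j,i)$ with $F(\phi)(y)=x$, I would declare $(\El\alpha)(\phi)\defeq \phi$, viewed now as a morphism from $(i,\alpha_i(x))$ to $(j,\alpha_j(y))$ in $\El G$.

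The one thing to check is that this is legal, i.e.\ that $G(\phi)\bigl(\alpha_j(y)\bigr)=\alpha_i(x)$. This is exactly where naturality of $\alpha$ enters: the naturality square for $\phi:j\rto i$ gives $G(\phi)\circ\alpha_j=\alpha_i\circ F(\phi)$, and evaluating at $y$ yields
\[G(\phi)\bigl(\alpha_j(y)\bigr) \;=\; \alpha_i\bigl(F(\phi)(y)\bigr) \;=\; \alpha_i(x),\]
as required. This is the only nontrivial step, and even it is immediate from the definitions.

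Functoriality of $\El\alpha$ is then automatic, since composition and identities in both $\El F$ and $\El G$ are inherited from $\I$, and the assignment on morphisms is the identity on the underlying arrows of $\I$. There is no real obstacle here; the content of the lemma is just the observation that naturality of $\alpha$ is precisely the condition needed to preserve the defining constraint $F(\phi)(y)=x$ under pushforward by $\alpha$.
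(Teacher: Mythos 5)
Your proof is correct and follows essentially the same route as the paper's: define $\El\alpha$ on objects via $\alpha_i$, send a morphism to the same underlying arrow of $\I$, and use the naturality square to verify $G(\phi)(\alpha_j(y))=\alpha_i(x)$, with functoriality automatic since composition in both categories of elements is inherited from $\I$. Nothing is missing.
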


\begin{proof}
  For every object $i\in \I$, $\alpha_i: F(i) \rto G(i)$.  In particular, we can
  define $(\El\alpha)(i,x) = \alpha_i(x)$.  This defines $\El \alpha$ on
  objects.  Consider a morphism $(i,x) \rto (j,y)$ in $\El F$ represented by
  $\phi: j \rto i$.  We claim that this induces a morphism
  $(i,\alpha_i(x)) \rto (j, \alpha_j(y))$ in $\El G$, and we can therefore
  define $\El \alpha$ to map it to this morphism; composition will then be
  automatically respected, since in both $\El F$ and $\El G$ composition is
  defined to be composition in $\I$.  It therefore suffices to show that
  $G(\phi)(\alpha_j(y)) = \alpha_i(x)$.  But, since $\alpha$ is natural,
  \[G(\phi)(\alpha_j(y)) = \alpha_i(F(\phi)(y)) = \alpha_i(x),\]
  as desired.
\end{proof}

The goal of the next few results is to generalize the notion of a filtration of
a space indexed by the integers, to a more general notion of a filtration that
can be indexed by any poset.  Although the results that follow should not be
surprising to experts, we were unable to find them in the literature and thus
include them here.

\begin{definition}
  Given a poset $\P$, the poset $\P_{\leq A}$ (for any object $A\in \P$)
  is the full subcategory of those objects $B$ for which there exists a morphism
  $B \rto A$.  The poset $\P_{<A}$ is the full subcategory of $\P_{\leq A}$ of
  those objects not isomorphic to $A$.

  Write $\P_+$ for the poset $\P$ with a new initial object $+$ added.
\end{definition}

\begin{definition}
  Let $\P$ be a small poset and $X: \I \rto \Set$ a functor.  A
  \emph{$\P$-valuation} on $X$ is a functor $\nu:\El X \rto \P_+$ such
  that the preimage of $+$ contains only the basepoint.  When $\P$ is clear from
  context we simply call it a \emph{valuation}.

  Given a $\P$-valuation $\nu$, there is a filtration on $X$, with every object
  $p\in \P$ giving a subfunctor $F_pX$ of $X$ by
  \[F_pX(i) = \{x\in X(i)\,|\, \nu(x) \leq p\}.\] (When the valuation is not
  clear from context we will write this as $F^{\nu}_pX$.)  We write
  \[F_{<p}X = \bigcup_{q\in \P_{<p}} F_qX.\]
  
  A valuation $\nu$ gives a functor $\tilde \nu: \P \rto [\I,\Set]$ by
  $p \rgoesto F_pX$.  
\end{definition}

As a direct conssequence of Lemma~\ref{lem:precompose-int} we see that given a
$\P$-valuation on $X$ we obtain a $\P$-valuation on any functor obtained by
postcomposing with $X$.

\begin{lemma} \label{lem:compose-well-constrained}
  Let $X: \I \rto \Set$ be a functor, and let $\nu$ be a $\P$-valuation on $X$.
  For any functor $Y: \mathcal{J} \rto \I$, $\nu$ induces a $\P$-valuation
  $\nu'$ on $XY$. 
\end{lemma}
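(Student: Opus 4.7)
The plan is to construct $\nu'$ by direct composition and then check the basepoint condition, so the argument is essentially bookkeeping on top of Lemma~\ref{lem:precompose-int}.

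First, I would apply Lemma~\ref{lem:precompose-int} with $F = X$ and $G = Y$ to obtain an induced functor $\El Y : \El_{\mathcal{J}} XY \rto \El_\I X$, sending an object $(j,x)$ (with $x \in XY(j) = X(Y(j))$) to the object $(Y(j), x) \in \El_\I X$, and acting on morphisms via $Y$ on the underlying arrows of $\I$.

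Next, I would simply set $\nu' \defeq \nu \circ \El Y : \El_\mathcal{J} XY \rto \P_+$. Since both $\El Y$ and $\nu$ are functors, $\nu'$ is automatically a functor, so the only thing left to verify is that the preimage of the formal initial object $+$ contains only the basepoint. Unwinding, $\nu'(j,x) = +$ iff $\nu(Y(j), x) = +$, which by the hypothesis that $\nu$ is a $\P$-valuation on $X$ holds iff $x$ is the basepoint of $X(Y(j))$. But this is exactly the condition that $(j,x)$ is the basepoint of $XY(j)$, so $\nu'$ satisfies the required basepoint condition.

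There is no real obstacle here; the statement is a naturality/pullback compatibility whose only substantive ingredient is the functoriality statement already isolated as Lemma~\ref{lem:precompose-int}. The proof is essentially one sentence: precompose the valuation with the induced functor on categories of elements and observe that basepoints pull back to basepoints.
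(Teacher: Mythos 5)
Your proof is correct and follows essentially the same route as the paper: the paper also defines $\nu'((j,x)) = \nu(Y(j),x)$, i.e.\ precomposes $\nu$ with the functor on categories of elements furnished by Lemma~\ref{lem:precompose-int}. Your explicit check of the basepoint condition is a detail the paper leaves implicit, but there is no difference in method.
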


\begin{proof}
  The valuation $\nu': \El XY \rto \P$ is given by $\nu'((j,x)) =
  \nu(Y(j),x)$.
\end{proof}





Consider the case when $\I = \Delta^\op$.  In any classical filtration on a
simplicial set, it should be the case that for any simplex $x$ and any $i$,
$\nu(d_ix) \leq \nu(x)$ and that $\nu(s_ix) = \nu(x)$.  The first of these
follows from the definition of a valuation, and the same definition implies that
$\nu(s_ix) \leq \nu(x)$.  The following lemma shows that, as expected,
$\nu(s_ix) = \nu(x)$, and moreover this holds for any morphism with a left
inverse.  

\begin{lemma} \label{lem:inv-inverse}
  Suppose that $\P$ is a small poset and $X: \I \rto \Set$ is any functor.  A
  $\P$-valuation $\nu$ on $X$ is invariant across any morphism represented by a
  morphism in $\I$ with a left inverse.
\end{lemma}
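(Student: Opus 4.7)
The plan is to exploit antisymmetry of $\P_+$: to force $\nu(i,x) = \nu(j,y)$ I will exhibit morphisms in $\El X$ going \emph{both} ways between $(i,x)$ and $(j,y)$, and then let the functoriality of $\nu$ do the work.

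First I would unpack the hypothesis. Let $\phi: j \rto i$ be a morphism of $\I$ with a left inverse $\psi:i \rto j$, so that $\psi\phi = \id_j$, and suppose $\phi$ represents a morphism $(i,x) \rto (j,y)$ in $\El X$; by definition of $\El X$ this is the same as the equation $X(\phi)(y) = x$. The core observation is that $\psi$ then represents a morphism in the opposite direction, $(j,y) \rto (i,x)$. This is a one-line functoriality check: $X(\psi)(x) = X(\psi)(X(\phi)(y)) = X(\psi\phi)(y) = X(\id_j)(y) = y$, which is exactly the condition required for $\psi$ to define a morphism $(j,y) \rto (i,x)$ in $\El X$.

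Finally, I would apply the valuation $\nu: \El X \rto \P_+$, which is a functor and therefore sends these two morphisms to inequalities $\nu(i,x) \leq \nu(j,y)$ and $\nu(j,y) \leq \nu(i,x)$ in $\P_+$. Because $\P_+$ is a poset (and not merely a preorder), antisymmetry forces $\nu(i,x) = \nu(j,y)$, as desired. I do not expect any real obstacle here; the only substantive content is recognising that the variance convention on $\El X$ was chosen precisely so that a retract in $\I$ produces a pair of opposing morphisms in $\El X$, turning the definition of a poset into an invariance statement.
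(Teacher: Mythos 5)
Your proposal is correct and follows essentially the same route as the paper: exhibit the reverse morphism in $\El X$ represented by the left inverse via the computation $X(\psi)(x) = X(\psi\phi)(y) = y$, and then use the poset structure of $\P_+$ (the paper phrases this as ``a morphism with a one-sided inverse must be the identity,'' you phrase it as antisymmetry, which is the same thing) to conclude invariance. No gaps.
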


\begin{proof}
  Let $\phi: i \rto j$ in $\I$ be a morphism with a left inverse
  $\psi:j \rto i$, and suppose that $\phi$ induces a morphism $f:x \rto y$ in
  $\El X$.  We have
  \[F(\psi)(x) = F(\psi)\big(F(\phi)(y)\big) = F(\psi\phi)(y) = y.\] Thus $\psi$
  represents a morphism $y \rto x$ which is a right inverse to $f$.  Then
  $\nu(f)$ has a right inverse.  But since $\P$ is a poset any morphism with a
  one-sided inverse must be the identity, so $\nu(f)$ is an identity morphism,
  as desired.
\end{proof}

As we will be studying $K$-theory functors, it is useful to be able to extend
the notion of a $\P$-valuation to work for symmetric spectra, as well as
simplicial sets.  

\begin{definition}
  Let $Y$ be any simplicial set together with a $\P$-valuation $\nu$.  For any
  simplicial set $K$, we define a $\P$-valuation $\nu\wedge K$ by $(\nu\wedge
  K)(y,k) = \nu(y)$.  
  
  Let $X$ be a symmetric spectrum of simplicial sets.

  A \emph{$\P$-valuation} $\nu_\dotp$
  on $X$ is a sequence of $\P$-valuations $\nu_n: \El X_n \rto \P$ such
  that for all $n \geq 0$ the diagram
  \begin{diagram}
    { \El (X_{n} \sma S^1) & & \El X_{n+1} \\
      & \P \\}; \to{1-1}{1-3}^{\sigma_{n}} \to{1-1}{2-2}_{\nu_{n}\wedge S^1}
    \to{1-3}{2-2}^{\nu_{n+1}}
  \end{diagram}
  commutes.  (Here, $\sigma_{n}$ is the structure map of $X$.)   
\end{definition}

An important example of $\P$-valuations on spectra arises from $\P$-valuations
on $\Gamma$-sets.  Recall that for a $\Gamma$-set $X: \FinSet_* \rto \Set$, the
$K$-theory spectrum of $X$ is defined to have $n$-th space
$X\circ S^n: \Delta^\op \rto \Set$.  Here, as mentioned in the notation section,
it is important to note that we are defining $S^n = (S^1)^{\wedge n}$, with
$S^1$ given by the standard functor $\Delta^\op \rto \FinSet_*$ taking $[n]$ to
$\mathbf{n}$.  For a more detailed analysis of this particular construction, see
\cite[top p.181]{elmendorfmandell}.

\begin{lemma} \label{lem:valtoK}
  Let $X$ be a $\Gamma$-set.  A $\P$-valuation $\nu$ on $X$ induces a $\P$-valuation
  $\nu_\dotp$ on $K(X)$.  
\end{lemma}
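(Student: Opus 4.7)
The plan is to build $\nu_n$ on the $n$-th space $X \circ S^n$ of $K(X)$ by pulling back the $\Gamma$-set valuation along $S^n: \Delta^\op \rto \FinSet_*$ using Lemma~\ref{lem:compose-well-constrained}, and then to verify compatibility with the spectrum structure maps via Lemma~\ref{lem:inv-inverse}. Concretely, for each $n \geq 0$, Lemma~\ref{lem:compose-well-constrained} applied to $S^n$ and $\nu$ produces a $\P$-valuation $\nu_n: \El(X \circ S^n) \rto \P_+$ sending a $k$-simplex $x \in X(S^n_k)$ to $\nu(S^n_k, x)$. This gives the sequence $\nu_\dotp$ as functors; the remaining work is to check the commutativity of the diagram in the definition of a $\P$-valuation on a symmetric spectrum.

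The needed identity is $\nu_{n+1} \circ \sigma_n = \nu_n \wedge S^1$, where $\sigma_n: X(S^n) \wedge S^1 \rto X(S^{n+1})$ is the structure map. At simplicial level $k$, a non-basepoint element of $(X(S^n) \wedge S^1)_k$ has the form $(x, t)$ with $x \in X(S^n_k)$ and $t \in S^1_k \setminus \{*\}$, and the $\Gamma$-space assembly sends it to $X(\iota_t)(x) \in X(S^n_k \wedge S^1_k) = X(S^{n+1}_k)$, where $\iota_t: S^n_k \rto S^n_k \wedge S^1_k$ is the pointed map $a \mapsto a \wedge t$. By construction $(\nu_n \wedge S^1)(x, t) = \nu_n(x) = \nu(S^n_k, x)$, while $\nu_{n+1}(X(\iota_t)(x)) = \nu(S^{n+1}_k, X(\iota_t)(x))$, so it suffices to show these two values of $\nu$ agree.

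The key observation is that $\iota_t$ admits a left inverse in $\FinSet_*$: the retraction $r_t: S^n_k \wedge S^1_k \rto S^n_k$ sending $a \wedge t \mapsto a$ and $a \wedge t' \mapsto *$ for $t' \neq t$ satisfies $r_t \circ \iota_t = \mathrm{id}$. Because $\iota_t$ represents a morphism $(S^{n+1}_k, X(\iota_t)(x)) \rto (S^n_k, x)$ in $\El X$ and has a left inverse in $\FinSet_*$, Lemma~\ref{lem:inv-inverse} yields $\nu(S^{n+1}_k, X(\iota_t)(x)) = \nu(S^n_k, x)$, as required; the basepoint case is automatic since both sides evaluate to $+ \in \P_+$.

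There is no substantive obstacle: the content of the lemma is that the $\Gamma$-space assembly at each level is built from split monomorphisms, so Lemma~\ref{lem:inv-inverse} applies essentially transparently. The one point requiring care is reconciling the contravariant variance of $\El$ with the direction of $\iota_t$ in $\FinSet_*$, so that the morphism in $\El X$ to which Lemma~\ref{lem:inv-inverse} is applied is the one whose representative in $\FinSet_*$ is $\iota_t$ itself, not $r_t$.
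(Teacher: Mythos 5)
Your proposal is correct and follows essentially the same route as the paper: pull the valuation back levelwise along $S^n:\Delta^\op\rto\FinSet_*$ (the paper cites Lemma~\ref{lem:precompose-int}, you use its valuation form Lemma~\ref{lem:compose-well-constrained}), and then verify the structure-map compatibility by observing that the assembly map is $X(\mathrm{inc}_i)$ for the split monomorphism including $S^n$ with constant last coordinate, so Lemma~\ref{lem:inv-inverse} applies. Your explicit retraction $r_t$ and the variance check match the paper's argument; the only thing the paper adds is the remark that commutativity on morphisms follows analogously, which is automatic since $\P_+$ is a poset.
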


\begin{proof}
  By Lemma~\ref{lem:precompose-int}, a $\P$-valuation
  $\nu: \El_\Gamma X \rto \P$ induces a $\P$-valuation on $XY$.  In particular,
  $\nu$ induces valuations $\nu_n: \El X(S^n) \rto \P$.  

  To check that a valuation on $X$ induces a valuation on $K(X)$ it therefore
  suffices to check that the diagram
  \begin{diagram}
    { \El (X(S^n) \sma S^1) && \El X(S^{n+1}) \\
      & \P \\};
    \to{1-1}{1-3}^{\sigma_n} \to{1-1}{2-2}_{\nu_n} \to{1-3}{2-2}^{\nu_{n+1}}
  \end{diagram}
  commutes.  First, consider an object in $\El(X(S^n) \sma S^1)$.  This is a
  triple $(m,x,i)$ with $m \geq 0$, $x\in X(S^n)_m$ and $i\in (S^1)_m$.  The map
  $\sigma_n$ takes $(x,i)$ to the simplex $\mathrm{inc}_i(x)$, where
  $\mathrm{inc}_i: S^n \rto S^{n+1}$ is the map including $S^n$ into
  $S^{n+1}$ with constant $n+1$-st coordinate equal to $i$.  Thus to show that
  this diagram commutes on objects it suffices to show that
  $\nu_{n+1}(\mathrm{inc}_i(x)) = \nu_n(x)$.  But $\mathrm{inc}_i$ has a left
  inverse, so by Lemma~\ref{lem:inv-inverse} $\nu$ is invariant across
  $\mathrm{inc}_i$.  Since both $\nu_n$ and $\nu_{n+1}$ are induced from $\nu$,
  these must agree, as desired.  That the diagram commutes on morphisms follows
  analogously. 
%
\end{proof}

\subsection{An aside on homotopy colimits}

In this section we discuss some results about homotopy colimits which we will
use in the next section to identify the homotopy type of certain filtered
$\Gamma$-sets.  

We begin by proving a standard lemma relating the homotopy type of a space to
the homotopy types of spaces covering it.  Although this type of result is
standard, we could not find the precise version that we needed in the
literature, and thus include it here.

\begin{lemma}[Mayer--Vietoris blowup] \label{lem:MVblow} Let $X\in s\Set_*$.
  Suppose that we are given a $\P$-valuation $\nu$ on $X$.  Then the natural
  inclusion $F_AX \rcofib X$ induces a weak equivalence
  \[\hocolim_\P \tilde \nu \rwe X.\]
\end{lemma}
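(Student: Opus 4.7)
The plan is to first exhibit $X$ as the ordinary colimit $\colim_\P \tilde \nu$ and then show that the canonical map $\hocolim_\P \tilde\nu \rto \colim_\P \tilde\nu$ is a weak equivalence. The colimit identification is essentially formal: the basepoint of $X$ lies in every $F_pX$ because $\nu$ sends it to $+ \in \P_+$, and every non-basepoint simplex $x$ lies in $F_{\nu(x)}X$ since $\nu(x) \leq \nu(x)$; because the $F_pX$ are subfunctors of $X$ which jointly exhaust it, $X = \colim_\P \tilde\nu$.

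For the equivalence $\hocolim \simeq \colim$, I would realize $\hocolim_\P \tilde\nu$ as the diagonal of the pointed Bousfield--Kan bar bisimplicial set
\[B_{m,n} \defeq \bigvee_{p_0 \leq \cdots \leq p_m} (F_{p_0}X)_n,\]
where the wedge runs over $m$-chains in $\P$, equipped with the evident projection $q: B_{\bullet,\bullet} \rto X$ onto $X$ viewed as constant in the bar direction (collapse the chain and include $F_{p_0}X \hookrightarrow X$). For each fixed $n$ I would analyze $q$ fiberwise: the preimage of a non-basepoint $x \in X_n$ consists of those chains $p_0 \leq \cdots \leq p_m$ with $\nu(x) \leq p_0$, i.e., is $N(\P_{\geq \nu(x)})_m$, which is contractible because $\nu(x)$ is initial in $\P_{\geq \nu(x)}$. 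The preimage of the basepoint is a single point. Hence $B_{\bullet,n} \rto X_n$ is a weak equivalence of simplicial sets at each $n$, and the standard realization lemma for bisimplicial sets (levelwise weak equivalences in one direction induce an equivalence on diagonals) yields the desired weak equivalence.

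The main obstacle, and the reason the result is not completely trivial, is handling the basepoint and the possibly large poset $\P$ coherently. In an unpointed bar construction the preimage of the basepoint would be all of $N\P$, which need not be contractible; working with pointed simplicial sets and the pointed bar construction, as above, collapses this spurious $N\P$ to a single point and makes the fiber analysis behave as expected. The only prerequisite is that $\tilde \nu$ genuinely is a diagram of \emph{pointed} simplicial sets, which is precisely the content of the convention that $\nu^{-1}(+) = \{\ast\}$; everything else is bookkeeping.
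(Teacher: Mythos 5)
Your proof is correct and follows essentially the same route as the paper: model the homotopy colimit by the (bar) bisimplicial set, map it to $X$ viewed as constant in the bar direction, and check for each simplex $x$ that the fiber is the nerve of the coslice $\P_{\nu(x)/}$, which is contractible because $\nu(x)$ is initial. Your explicit remark that one must use the \emph{pointed} bar construction so that the basepoint fiber collapses to a point (rather than all of $N\P$) is a worthwhile clarification of a detail the paper's argument passes over silently.
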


\begin{proof}
  We model $\hocolim_\P \tilde \nu$ as a bisimplicial set given by the functor
  $G:\Delta^\op \rto s\Set_*$ given by
  \[[m] \rgoesto^G \coprod_{p_0 \rightarrow\cdots\rightarrow p_m\in N\P} F_{p_0}X.\]
  This has a natural map to the functor $[m] \rgoesto X$; our goal is to show
  that this map is a weak equivalence after taking homotopy colimits.  Let $G_n$ be
  the simplicial set
  \[[m] \rgoesto \coprod_{p_0 \rightarrow\cdots\rightarrow p_m\in N\P}
    (F_{p_0}X)_n.\] Considering $X_n$ as a discrete simplicial set, it suffices
  to check that the map $\psi_n: G_n \rto X_n$ is a weak equivalence for all
  $n$.

  Fix $x\in X_n$, and let $G_{n,x} = \psi_n^{-1}(x)$; since $X_n$ is discrete,
  it suffices to check that the \emph{unpointed} simplicial set $G_{n,x}$ is
  contractible for all $n$ and $x$.  Let $\P_x$ be the full subcategory of those
  $A\in \P$ such that $x\in F_AX$.  Then $G_{n,x} \cong N\P_x$.  But by
  definition, $\P_x = \P_{\nu(x)/}$.  This has an initial object $\nu(x)$, and
  is therefore contractible, as desired.
\end{proof}

\begin{remark}
  This is called the \emph{Mayer--Vietoris blowup} because it is a
  generalization of the situation in which one takes a cover of $X$ by open
  sets, and takes the homotopy colimit of all finite intersections of the open
  sets.  
\end{remark}

Since homotopy colimits of symmetric spectra are levelwise, we can directly
extend this result to symmetric spectra.

\begin{corollary} \label{cor:spec-MV} Let $X$ be a spectrum equipped with a
  $\P$-valuation $\nu$.  Then there exists a weak equivalence
  \[\hocolim_\P \tilde \nu \rwe X.\]
\end{corollary}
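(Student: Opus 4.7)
The plan is to deduce the corollary directly from Lemma~\ref{lem:MVblow} by working one level at a time. First I would verify that the $\P$-valuation $\nu_\dotp$ on the symmetric spectrum $X$ assembles into a functor $\tilde\nu: \P \rto \mathrm{Sp}^\Sigma$ whose value at $p$ is a subspectrum $F_pX$ of $X$ with $(F_pX)_n = F^{\nu_n}_pX_n$. The compatibility condition on $\nu_\dotp$ with the structure maps---that $\nu_{n+1}\circ \sigma_n = \nu_n \wedge S^1$ on categories of elements---is exactly what forces $\sigma_n$ to restrict to a map $(F_pX)_n \wedge S^1 \rto (F_pX)_{n+1}$, so that each $F_pX$ really is a subspectrum of $X$ and $p \mapsto F_pX$ is functorial in $\P$.

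Next, using the fact (noted just above the corollary) that homotopy colimits of symmetric spectra are computed levelwise, I would identify $(\hocolim_\P \tilde\nu)_n$ with $\hocolim_\P \widetilde{\nu_n}$ equipped with its natural $\Sigma_n$-action inherited from $X_n$. Applying Lemma~\ref{lem:MVblow} to $(X_n, \nu_n)$ at each level produces a weak equivalence $\hocolim_\P \widetilde{\nu_n} \rwe X_n$, and the naturality of the bisimplicial model for the homotopy colimit used in the proof of that lemma ensures these levelwise maps respect both the structure maps of $X$ and the symmetric group actions.

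The main work is really just setting up the levelwise filtration; once that is in place, the corollary is a formal consequence of Lemma~\ref{lem:MVblow}. There is no genuine obstacle, only the bookkeeping needed to confirm that the valuation compatibility condition is strong enough to make everything levelwise coherent as a map of symmetric spectra, rather than merely at each level separately.
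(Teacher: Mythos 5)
Your proposal is correct and follows essentially the same route as the paper, which simply observes that homotopy colimits of symmetric spectra are computed levelwise and then invokes Lemma~\ref{lem:MVblow} in each level. The extra bookkeeping you supply (checking that the compatibility of $\nu_\dotp$ with the structure maps makes $p \mapsto F_pX$ a functor to subspectra, so the levelwise equivalences assemble into a map of spectra) is exactly the content the paper leaves implicit.
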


In order to show that certain types of $K$-theories of $\Gamma$-sets are
suspension spectra, it will be useful to have a concrete model of a simplicial
join. 

\begin{definition}
  For an unpointed simplicial set $X$ and a pointed simplicial set $Y$ (with
  basepoint $y_0$) write $X \star Y$ for the pointed simplicial set $(X * Y)/(X
  * y_0)$, where $*$ is the join of simplicial sets.  Since $X * y_0 \cong CX
  \simeq *$, this is homotopy equivalent to the unpointed join $X*Y$; thus $X
  \star Y \simeq \Sigma (X \wedge Y)$, where the $\wedge$ is done with any
  choice of basepoint of $X$.

  We write $\Sigma'X$ for the space $S^0 \star X$; this is homotopy equivalent
  to $\Sigma X$ when $X$ is nonempty, and $S^0$ when $X$ is empty.
\end{definition}

\begin{lemma} \label{lem:join} For a (pointed) simplicial set $K$, write $CK$
  for a choice of factorization $K \rcofib CK \rwe *$.  For any simplicial sets
  $X$ and $Y$,
  \[(CX\times Y) \underset{X \times Y}{\cup} (X \times CY) \simeq X * Y.\]
\end{lemma}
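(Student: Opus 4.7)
The plan is to establish the identity on the nose for a particular convenient choice of cone, and then transfer to arbitrary factorizations via the homotopy invariance of pushouts along cofibrations.

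First I would take the standard combinatorial model of the cone, namely $CK = (K \times \Delta^1)/(K \times \{1\})$, and the standard model of the join. Unwinding the definition, $X \ast Y$ is built from $X \times \Delta^1 \times Y$ by collapsing $X \times \{0\} \times Y$ onto $X$ (forgetting the $Y$-coordinate) and $X \times \{1\} \times Y$ onto $Y$ (forgetting the $X$-coordinate). Subdividing $\Delta^1$ at the midpoint produces two halves $X \times [0,\tfrac12] \times Y$ and $X \times [\tfrac12,1] \times Y$ which, after making the required collapses, are canonically isomorphic to $CX \times Y$ and $X \times CY$ respectively, glued along the common slice $X \times \{\tfrac12\} \times Y = X \times Y$. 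This yields a natural isomorphism $X \ast Y \cong (CX \times Y) \cup_{X \times Y} (X \times CY)$ for the standard cone.

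Next I would promote this to arbitrary cone-factorizations. Because a factorization $K \rcofib CK \rwe \ast$ is by definition a cofibration, and cofibrations in $s\Set$ are just monomorphisms, the map $X \times Y \rto CX \times Y$ is again a cofibration (products in $s\Set$ preserve monomorphisms). Hence the pushout $(CX \times Y) \cup_{X \times Y} (X \times CY)$ is in fact a homotopy pushout in $s\Set$, and its homotopy type depends only on the weak equivalence type of the diagram. Given any two choices of factorizations for $CX$ and $CY$, there is a zig-zag of weak equivalences of diagrams under $X \times Y$, and so their pushouts are weakly equivalent.

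Combining the two previous paragraphs, every factorization gives a pushout weakly equivalent to the one for the standard cone, which is honestly isomorphic to $X \ast Y$. The main point to be careful about is step two: one must verify that $X \times Y \hookrightarrow CX \times Y$ is genuinely a cofibration (equivalently, a levelwise injection) so that the pushout computes the homotopy pushout without needing to cofibrantly replace anything. Once that is in hand, the rest is a bookkeeping exercise in comparing the two models.
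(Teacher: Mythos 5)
Your overall strategy---realize the pushout as a homotopy pushout because its legs are cofibrations, and then compare with a reference model of the join---is sound and is close to what the paper does. But your first step, as stated, fails: for the simplicial join (which is what the paper means by $*$) there is no isomorphism $X * Y \cong (CX\times Y)\cup_{X\times Y}(X\times CY)$, even for the standard cone $CK = (K\times\Delta^1)/(K\times\{1\})$. Already for $X=Y=\Delta^0$ one has $C\Delta^0\cong\Delta^1$, so the right-hand side is two $1$-simplices glued along a vertex, whereas $\Delta^0 * \Delta^0\cong\Delta^1$ is a single $1$-simplex; these are weakly equivalent but not isomorphic. The ``subdivide $\Delta^1$ at the midpoint'' picture is a topological argument; simplicially, the double mapping cylinder $(X\times\Delta^1\times Y)/\!\sim$ is not the combinatorial join, and the comparison between the two is precisely the nontrivial content of the lemma. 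So your step one does not follow by ``unwinding the definition'': it silently invokes the standard weak equivalence between $X*Y$ and the homotopy pushout of $X\leftarrow X\times Y\rightarrow Y$, which is the fact that actually needs to be quoted.

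Once you replace ``isomorphic'' by ``weakly equivalent'' and cite that standard identification, your argument closes up and essentially becomes the paper's proof. The paper starts from the span $X\xleftarrow{pr_1}X\times Y\xrightarrow{pr_2}Y$, whose homotopy pushout is $X*Y$, maps it by levelwise weak equivalences to the span $X\times CY\leftarrow X\times Y\rightarrow CX\times Y$, and observes that since both legs of the latter are cofibrations its ordinary pushout computes the homotopy pushout. Your second step (the legs are monomorphisms because products preserve monomorphisms, hence the pushout is a homotopy pushout, hence its homotopy type is independent of the choice of cone factorization) is correct and is the same observation. The net effect is that the ``explicit model plus transfer'' detour is unnecessary: working up to weak equivalence from the start, with the projection span as the reference, avoids ever claiming an on-the-nose identification that does not hold in $s\Set$.
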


\begin{proof}
  Consider the following diagram:
  \begin{diagram}
    { X & X \times Y & Y \\
      X\times CY & X \times Y & CX \times Y \\};
    \to{1-2}{1-1}_{pr_1} \to{1-2}{1-3}^{pr_2}
    \cofib[left]{2-2}{2-1} \cofib{2-2}{2-3}
    \we{1-1}{2-1} \we{1-3}{2-3} \eq{1-2}{2-2}
  \end{diagram}
  The homotopy pushout of the top row is $X * Y$, so the homotopy pushout of the
  bottom row is, as well.  As both maps in the bottom row are cofibrations the
  homotopy pushout of the bottom row is the ordinary pushout.  This proves the lemma.
\end{proof}

\begin{proposition} \label{prop:tojoin}
  Let $\C$ be a contractible category, let $F:\C \rto s\Set_*$ be a functor, and
  let $\C^\circ$ be the full subcategory of those $A$ such that
  $F(A) \simeq *$.  Suppose that $\C^\circ$ is a cosieve in $\C$, and suppose
  that there exists a natural transformation $\alpha:c_K \Rto F$ such that for
  all $A\not\in \C^\circ$, the map $\alpha_A$ is an equivalence.  Then
  \[\hocolim_\C F \simeq N\C^\circ \star K.\]
\end{proposition}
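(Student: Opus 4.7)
The plan is to realize $\hocolim_\C F$ as a homotopy pushout and match it with the pushout model for the half-join given by Lemma~\ref{lem:join}. First I would use the natural transformation $\alpha$ to form a levelwise cofiber sequence $c_K \Rto F \Rto F/c_K$ of functors $\C \rto s\Set_*$; by hypothesis the cofiber $(F/c_K)(A)$ is contractible for $A \notin \C^\circ$, and weakly equivalent to $\Sigma K$ for $A \in \C^\circ$.

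Taking $\hocolim$ over $\C$ and over $\C^\circ$, with the cosieve inclusion $i:\C^\circ \hookrightarrow \C$ inducing vertical comparison maps, yields a map of cofiber sequences in $s\Set_*$. The crux is to show that the induced map $\hocolim_{\C^\circ}(F/c_K)|_{\C^\circ} \rto \hocolim_\C(F/c_K)$ is a weak equivalence. I would argue this via a Kan extension argument: for any functor $H:\C \rto s\Set_*$ objectwise contractible on the complementary sieve $\C^s = \C \setminus \C^\circ$, the counit $i_! i^* H \Rto H$ of the $i_! \dashv i^*$ adjunction is an objectwise equivalence---on $\C^\circ$ it is the identity, and on $A \in \C^s$ the comma category $\{B \in \C^\circ \mid B \rto A\}$ is empty by the cosieve property, so $(i_! i^* H)(A) = *$. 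Combined with the standard identity $\hocolim_\C \circ \, i_! \simeq \hocolim_{\C^\circ}$, this gives the equivalence for $H = F/c_K$, with common value $N\C^\circ_+ \wedge \Sigma K$.

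With the right vertical an equivalence and the rows cofiber sequences, the left square is a homotopy pushout. Since $\hocolim_\C c_K \simeq K$ (by contractibility of $\C$) and $\hocolim_{\C^\circ} F \simeq *$ (since $F|_{\C^\circ} \simeq c_*$), this identifies $\hocolim_\C F$ as the cofiber of the collapse map $N\C^\circ_+ \wedge K \rto K$. I would then use Lemma~\ref{lem:join} to identify this with $N\C^\circ \star K$: in the pushout model $(CN\C^\circ \times K) \cup_{N\C^\circ \times K} (N\C^\circ \times CK)$ for $N\C^\circ * K$, quotienting by $CN\C^\circ \times k_0$ reduces the first and third factors to $K$ and $*$ respectively (by contractibility of $CN\C^\circ$ and $CK$), glued along $N\C^\circ_+ \wedge K$. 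The main obstacle is the Kan extension argument, where the cosieve property of $\C^\circ$ enters essentially.
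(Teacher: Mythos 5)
Your overall architecture is sound and your Kan extension computation is correct: for the cosieve inclusion $i:\C^\circ \hookrightarrow \C$ the comma categories $i\downarrow A$ are either empty (for $A\notin\C^\circ$) or have a terminal object (for $A\in\C^\circ$), so the pointwise left Kan extension is already derived, the counit is an objectwise equivalence on any $H$ that is objectwise contractible off $\C^\circ$, and $\hocolim_\C i_! \simeq \hocolim_{\C^\circ}$. Your final identification of $\operatorname{cofib}(N\C^\circ_+\wedge K \to K)$ with $N\C^\circ\star K$ is also fine. But there is a genuine gap at the pivotal step: ``with the right vertical an equivalence and the rows cofiber sequences, the left square is a homotopy pushout.'' In pointed \emph{simplicial sets} this inference is false. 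A square is a homotopy pushout only if the induced map on horizontal cofibers is an equivalence; the converse fails unstably. Concretely, if $f:X\to X^+$ is the plus construction on a space with nontrivial perfect fundamental group, the square with identities on the top and left edges and $f$ on the right and bottom has both horizontal cofibers contractible (the cofiber of $f$ is acyclic and simply connected), yet its homotopy pushout is $X\not\simeq X^+$. Since the proposition is stated for arbitrary $F:\C\to s\Set_*$ (and is applied with $K=S^0$ elsewhere in the paper), you cannot appeal to simple connectivity to rescue the implication, so as written the argument does not close.

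The good news is that the gap is repairable with exactly the tools you already set up: instead of applying $i_!i^*$ only to the cofiber $F/c_K$, apply it to $c_K$ and $F$ themselves and form the objectwise homotopy pushout $H \defeq c_K \cup^h_{i_!i^*c_K} i_!i^*F$. Your comma-category computation shows $H(A)\simeq F(A)$ for $A\in\C^\circ$ (both legs are identities there) and $H(A)\simeq K\cup^h_* * \simeq K$ for $A\notin\C^\circ$, with the comparison $H\Rto F$ given by $\alpha_A$ in the latter case; so $H\to F$ is an objectwise equivalence by hypothesis. Since $\hocolim$ commutes with objectwise homotopy pushouts of functors, $\hocolim_\C F\simeq \hocolim_\C H \simeq K\cup^h_{N\C^\circ_+\wedge K}\hocolim_{\C^\circ}F \simeq \operatorname{cofib}(N\C^\circ_+\wedge K\to K)$, which is the square you wanted, now established directly rather than inferred from the cofibers. (You should also functorially replace $\alpha$ by a cofibration, e.g.\ via mapping cylinders, before forming strict cofibers, but that is routine.) For comparison, the paper avoids this issue entirely by reducing to the case $F(A)\in\{K,CK\}$ and decomposing the bar-construction model of the homotopy colimit by hand into $(N\C\boxtimes K)$ and $(N\C^\circ\boxtimes CK)$ glued along $(N\C^\circ\boxtimes K)$, then invoking Lemma~\ref{lem:join}; your route, once repaired, is more conceptual and isolates where the cosieve hypothesis enters.
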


\begin{proof}
  Pick a factorization $K \rcofib^\iota CK \racycfib *$.  Let us first consider
  the case when $F(A) = K$ whenever $A \notin \C^\circ$, and $F(A) = CK$ for
  $A\in \C^\circ$, with the morphisms in the image of $F$ being either the
  identity or $\iota$.

  Let $k_0$ be the basepoint of $K$.
  The homotopy colimit of $F$ can be modeled as the geometric realization of the
  simplicial space $X$ whose $m$-simplices are of the form
  \[X_m = \bigvee_{i_0 \rightarrow \cdots \rightarrow i_m} F(i_0).\] Since
  $F(i_0) = K$ or $CK$, and the morphisms are all either the identity or
  $\iota$, there is a natural levelwise cofibration taking this to the
  simplicial space $Y$ whose $m$-simplices are of the form
  \[Y_m = \bigvee_{i_0 \rightarrow \cdots \rightarrow i_m} CK = (N\C)_+
    \mathrel{\tilde\boxtimes} CK, = (N\C\boxtimes CK)/(N\C \boxtimes k_0)\]
  where $\boxtimes$ is the external product.  Every simplex in $X$ lies either
  inside $(N\C\boxtimes K)/(N\C \boxtimes k_0)$ or inside
  $(N\C^\circ\boxtimes CK)/(N\C^\circ \boxtimes k_0)$.  (Note that if
  $i_0 \in \C^\circ$ then so are all of the other $i$'s, since $\C^\circ$ is a
  cosieve in $\C$, so any simplex with $F(i_0) = CK$ is indexed by a simplex in
  $N\C^\circ$.)  These subspaces overlap in
  $(N\C^\circ \boxtimes K)/(N\C^\circ \boxtimes k_0)$.  Thus
  \[X \cong \big((N\C \boxtimes K) \cup_{N\C^\circ \boxtimes K} (N \C^\circ
    \boxtimes CK)\big) / ((N\C \boxtimes k_0) \cup_{N\C^\circ \boxtimes k_0} (N\C^\circ \boxtimes k_0)).\]
  Since $\C$ is contractible, $N\C \simeq k_0 \simeq C N \C^\circ$.  Taking
  geometric realizations of both sides and applying Lemma~\ref{lem:join} gives
  \[|X| \cong (N\C^\circ * K)/(N\C^\circ * k_0) = N\C^\circ \star K.\]
\end{proof}

\begin{corollary}
  Since homotopy colimits of spectra of simplicial sets are computed levelwise,
  the above proposition holds for categories into spectra of simplicial sets, as
  well.   
\end{corollary}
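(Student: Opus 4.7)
The plan is straightforward: invoke Proposition~\ref{prop:tojoin} at each spectrum level and then reassemble, exactly as Corollary~\ref{cor:spec-MV} was deduced from Lemma~\ref{lem:MVblow}.

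First, suppose we are given a functor $F: \C \rto \text{Sp}$ into symmetric spectra of simplicial sets, a spectrum $K$, and a natural transformation $\alpha: c_K \Rto F$ satisfying the hypothesis levelwise (i.e.\ $\alpha_A$ is a levelwise weak equivalence for $A \notin \C^\circ$, and $F(A)$ is levelwise contractible for $A\in \C^\circ$). For each $n \geq 0$ restriction to level $n$ produces a functor $F_n : \C \rto s\Set_*$ together with a natural transformation $\alpha_n : c_{K_n} \Rto F_n$; these satisfy the hypotheses of Proposition~\ref{prop:tojoin}, so that proposition yields a weak equivalence $\hocolim_\C F_n \simeq N\C^\circ \star K_n$ for each $n$.

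Second, I would assemble these levelwise equivalences into an equivalence of spectra. Since homotopy colimits of symmetric spectra of simplicial sets are computed levelwise, $(\hocolim_\C F)_n = \hocolim_\C F_n$, with structure maps inherited from those of $F$. Because the functor $X \rgoesto N\C^\circ \star X$ is natural in $X$ and commutes with $(-)\sma S^1$ up to the evident comparison map, the pointed simplicial sets $N\C^\circ \star K_n$ assemble into a spectrum whose structure maps are induced from those of $K$; this is the spectrum we denote by $N\C^\circ \star K$.

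The only point requiring any thought is checking that the levelwise equivalences produced by Proposition~\ref{prop:tojoin} are natural in the target simplicial set and therefore commute with the structure maps of the source and target spectra. Inspecting the proof, the equivalence is built from (i) the natural collapse $X \rto Y$ replacing each $F(i_0)$ by $CK_n$, (ii) the identification $Y \cong (N\C \boxtimes CK_n)/(N\C \boxtimes k_0)$, and (iii) the pushout decomposition to which Lemma~\ref{lem:join} is applied. Each of these is manifestly natural in $K_n$, so the levelwise maps commute with the structure maps and stabilize to the desired equivalence of spectra. This compatibility check is the main (albeit minor) obstacle, and once it is in hand the corollary follows immediately.
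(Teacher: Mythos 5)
Your proof is correct and follows essentially the same route as the paper, which justifies the corollary solely by the observation that homotopy colimits of symmetric spectra of simplicial sets are computed levelwise, so Proposition~\ref{prop:tojoin} can be applied at each level. Your additional check that the levelwise equivalences are natural in $K_n$ and hence compatible with the structure maps (so that the spaces $N\C^\circ \star K_n$ assemble into a spectrum) is exactly the detail the paper leaves implicit, and it is handled correctly.
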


Suppose that $f: X_\dotp \rto Y_\dotp$ is a map of symmetric spectra.  A map of
symmetric spectra inducing isomorphism of naive stable homotopy groups is a
stable equivalence (by \cite[Theorem I.4.23]{schwedebook}).  This means that if
for all $n$ we can find an $M$ such that for all $m>M$, $f_m$ induces an
isomorphism on $\pi_{n+m}$, then $f$ is a stable equivalence.  Thus all that is
necessary for $f$ to be a stable equivalence is for the number of homotopy
groups on which $f_m$ induces isomorphisms to grow sufficiantly faster than $m$.
It can therefore be possible to construct good Mayer--Vietoris approximations
even when they are not strictly equivalences.  The following theorem is
well-known to experts, but we include it for ease of reading.  We thank Daniel
Dugger for pointing out the correct reference.

\begin{theorem} \label{thm:nconn}
  Fix a positive integer $n$.  Let $\C$ be a small category,
  $F,G: \C \to s\Set_*$ two functors, and $\alpha: F \Rto G$ a natural
  transformation such that for all $C\in \C$, $\alpha_C$ is an isomorphism on
  $\pi_i$ for all $i < n$.  Then the induced map
  \[\hocolim_\C F \rto \hocolim_\C G\]
  is also an isomorphism on $\pi_i$ for all $i < n$.
\end{theorem}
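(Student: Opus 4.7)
The plan is to model both homotopy colimits via the bar construction and reduce the statement to two facts about $\pi_{<n}$-isomorphisms. Explicitly, I realize
\[\hocolim_\C F \simeq \bigl| [m] \mapsto B_m F \bigr|, \qquad B_m F = \bigvee_{c_0 \to \cdots \to c_m \in N\C_m} F(c_0),\]
and similarly for $G$, so that $\alpha$ induces a map $B_\bullet\alpha$ of simplicial pointed simplicial sets that in each simplicial degree is a wedge of copies of the maps $\alpha_{c_0}$. The theorem then follows from (a) the fact that in each simplicial degree $B_m\alpha$ is a $\pi_{<n}$-isomorphism, and (b) the fact that geometric realization of a levelwise $\pi_{<n}$-isomorphism between sufficiently cofibrant simplicial pointed simplicial sets is again a $\pi_{<n}$-isomorphism.

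For (a), I would first replace each $\alpha_{c_0}$ by a relative CW model; the hypothesis that $\alpha_{c_0}$ is iso on $\pi_{<n}$ allows me to arrange that all relative cells lie in dimensions $\geq n$. Wedging preserves this property, so the mapping cone of $B_m\alpha$ is a wedge of pointed CW complexes whose cells lie in dimensions $\geq n$, hence is $(n-1)$-connected. The cofibration long exact sequence in homotopy (valid in the connective range by Blakers--Massey) then yields the claimed $\pi_{<n}$-iso at each simplicial level.

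For (b), I would propagate the levelwise $\pi_{<n}$-isomorphism through the skeletal filtration of the realization. Each inclusion $|B_\bullet F|^{(m-1)} \rcofib |B_\bullet F|^{(m)}$ is a cofibration whose cofiber is essentially an $m$-fold suspension of the $m$-th non-degenerate simplices, so it is at least $(m-1)$-connected. A five-lemma argument applied to the long exact sequences in homotopy groups of the successive cofibrations inductively propagates the $\pi_{<n}$-iso from each skeletal stage to the next; since $\pi_i$ of the total realization is determined by only finitely many skeletal stages for each fixed $i$, the $\pi_{<n}$-iso survives the passage to the limit. The main obstacle I expect is the bookkeeping in (b): one needs the bar construction to be Reedy cofibrant so that the skeletal cofibers have the expected connectivity. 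This can be sidestepped either by working with the fat realization (dropping degeneracies entirely), or by observing directly that $B_\bullet F$ is Reedy cofibrant because its degeneracy maps are inclusions of wedge summands.
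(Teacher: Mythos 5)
Your overall architecture---model both homotopy colimits by the bar construction, check the comparison levelwise, then propagate through the skeletal filtration of the realization---is reasonable, and is essentially a hands-on unpacking of what the paper does abstractly. But both key steps (a) and (b) are justified by appeal to a ``cofibration long exact sequence in homotopy,'' and that is where the argument breaks. Unstably there is no long exact sequence of homotopy groups for a cofibration, and connectivity of the cofiber does not control the map on low-degree homotopy groups: the inclusion $S^1 \hookrightarrow D^2$ has cofiber $S^2$, which is $1$-connected, yet the map is not injective on $\pi_1$. More precisely, from ``relative cells in dimensions $\geq n$'' one can only conclude that the \emph{pair} is $(n-1)$-connected, hence that the map is an isomorphism on $\pi_i$ for $i < n-1$ and an epimorphism on $\pi_{n-1}$---one degree short of what you need. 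The cell model remembers only the connectivity of the pair, which is strictly weaker than the hypothesis that each $\alpha_{c_0}$ is an isomorphism on $\pi_{n-1}$, so no bookkeeping of cell dimensions alone can recover injectivity on $\pi_{n-1}$ after wedging; and Blakers--Massey does not close the gap without simple-connectivity assumptions that fail here, since the $F(c_0)$ are arbitrary pointed simplicial sets. The same defect recurs in (b): the ``five-lemma argument applied to the long exact sequences in homotopy groups of the successive cofibrations'' is not available, because the skeletal inclusions only give cofiber sequences, not fiber sequences.

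The statements you want in (a) and (b) are nevertheless true: wedges, cobase changes along cofibrations, and realizations of Reedy cofibrant simplicial objects all preserve $\pi_{<n}$-isomorphisms. But this is precisely the nontrivial content of the theorem, and proving it by hand requires confronting the fundamental group (van Kampen for $\pi_1$, then passage to universal covers and a Mayer--Vietoris argument for the higher groups), not a cofiber-connectivity count. The paper's proof sidesteps all of this: it identifies the $\pi_{<n}$-isomorphisms as the weak equivalences of the left Bousfield localization of $s\Set_*$ at $S^n \to *$ and then cites the general fact (Farjoun) that pointwise local equivalences induce local equivalences on homotopy colimits. As written, your proposal assumes, rather than proves, the wedge and gluing lemmas for $\pi_{<n}$-isomorphisms; either supply those by the localization argument (at which point the bar-construction scaffolding becomes unnecessary) or by an explicit low-dimensional analysis.
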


\begin{proof}
  Let $L_n s\Set_*$ be the left Bousfield localization of $s\Set_*$ with respect
  to the map $S^n \rto *$; it exists since $s\Set_*$ is combinatorial, left
  proper, and we are only localizing with respect to one map \cite[Theorem
  2.11]{barwick07}.  A morphism is a weak equivalence in $L_n s\Set_*$ if and
  only if it induces an isomorphism on $\pi_i$ for $i < n$.  Applying
  \cite[Proposition D.2]{farjoun96}, which states that a map between homotopy
  colimits which is induced by pointwise $L_n$-equivalences is itself an
  $L_n$-equivalence, implies the result.
\end{proof}

We will need the following technical lemma:
\begin{lemma} \label{lem:m-conn} Let $\P^\circ$ be a cosieve in $\P$.  Let $X$
  be a pointed simplicial set with a $\P$-valuation.  Suppose that there exists
  a pointed simplicial set $K$, and that the functor $\tilde \nu$ receives a
  natural transformation $\alpha: c_K \Rto \tilde \nu$ (where $c_K$ is the
  constant functor at $K$) such that
  \begin{itemize}
  \item when $A\notin \P^\circ$, the map $\alpha_A$ is a weak equivalence, and
  \item when $A\in \P^\circ$, the space $F_AX$ is $m$-connected.
  \end{itemize}
  Then there exists a functor $G: \P \rto sSet_*$ and a natural zig-zag of maps
  \[\hocolim_\P \tilde \nu \lwe \hocolim_\P G \rto N\P^\circ \star K\]
  where the first map is a weak equivalence and the second is an isomoprhism on
  $\pi_i$ for $i<m$.  
\end{lemma}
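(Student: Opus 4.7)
The plan is to construct $G$ as a mapping-cylinder thickening of $\tilde\nu$, and then compare it to the model functor of Proposition~\ref{prop:tojoin} using Theorem~\ref{thm:nconn} to absorb the gap between $m$-connectedness and strict contractibility. Concretely, I would define $G:\P\to s\Set_*$ by setting $G(A)=K$ when $A\notin\P^\circ$ and $G(A)=\mathrm{Mc}(\alpha_A)$, the mapping cylinder of $\alpha_A:K\to F_AX$, when $A\in\P^\circ$. For morphisms I take the identity on $\P\setminus\P^\circ$, the canonical inclusion $K\cong K\times\{0\}\hookrightarrow\mathrm{Mc}(\alpha_B)$ for a crossing morphism $A\to B$, and the map of mapping cylinders induced by $F_AX\to F_BX$ inside $\P^\circ$; naturality of $\alpha$ ensures that $G$ is a functor. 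Assembling the deformation retractions $\mathrm{Mc}(\alpha_A)\twoheadrightarrow F_AX$ on $\P^\circ$ with the $\alpha_A$ on the complement yields a pointwise weak equivalence $G\Rto\tilde\nu$, so $\hocolim_\P G\rwe \hocolim_\P\tilde\nu$ is the first leg of the zig-zag.

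Next I would define $G^{00}:\P\to s\Set_*$ by $G^{00}(A)=K$ for $A\notin\P^\circ$ and $G^{00}(A)=CK$ for $A\in\P^\circ$, with identity or cone-inclusion morphisms, so that Proposition~\ref{prop:tojoin} (applied after extending trivially to $\P_+$, which is contractible, with $G^{00}_+(+)=K$) identifies $\hocolim_\P G^{00}\simeq N\P^\circ\star K$. I then construct a natural transformation $\beta:G\Rto G^{00}$ given by the identity on $\P\setminus\P^\circ$ and, on $A\in\P^\circ$, by the cone quotient on the cylinder summand $K\times[0,1]$ of $\mathrm{Mc}(\alpha_A)$ together with the constant map collapsing $F_AX$ onto the cone apex; naturality follows by direct checking of the three morphism-square types. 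Under the retractions $\mathrm{Mc}(\alpha_A)\simeq F_AX$ and $CK\simeq *$, the map $\beta_A$ on $A\in\P^\circ$ corresponds to the collapse $F_AX\to *$, which is an isomorphism on $\pi_i$ for $i\le m$ by $m$-connectedness, while on the complement $\beta_A$ is the identity. Theorem~\ref{thm:nconn} then implies that the induced map $\hocolim_\P G\to \hocolim_\P G^{00}\simeq N\P^\circ\star K$ is an isomorphism on $\pi_i$ for $i<m+1$, hence for $i<m$.

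The main obstacle is the dual role required of $G$: it must both admit a pointwise weak equivalence to $\tilde\nu$ and carry a natural transformation to $G^{00}$ that is only a $\pi_i$-isomorphism for $i\le m$ (since $F_AX$ is merely $m$-connected, not contractible). The mapping cylinder is chosen precisely to support this, since $\mathrm{Mc}(\alpha_A)$ retracts to $F_AX$ and simultaneously contains $K$ as a subobject from which a natural map to $CK$ is evident. A subsidiary technical point is the contractibility hypothesis of Proposition~\ref{prop:tojoin}, which is handled by extending $G^{00}$ to $\P_+$ as indicated above before invoking the proposition.
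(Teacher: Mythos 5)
Your strategy coincides with the paper's: replace $\tilde\nu$ by a pointwise equivalent functor that is constant at $K$ off $\P^\circ$, map that functor onward to the ``cone model'' ($K$ off $\P^\circ$, a contractible value on $\P^\circ$), and conclude with Theorem~\ref{thm:nconn} plus Proposition~\ref{prop:tojoin}. The paper's $G$ simply takes the value $F_AX$ on $\P^\circ$ (with crossing morphisms given by $\alpha$) and maps, via the inclusions $F_AX \rto CX$, to the functor with value $CX$ on $\P^\circ$; your mapping-cylinder thickening and your target $G^{00}$ with value $CK$ are inessential variants of this, and your naturality checks for $\beta$ are fine (if anything, using $CK$ matches the case actually treated in the proof of Proposition~\ref{prop:tojoin} more literally than the paper's $CX$ does).

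The one place you genuinely depart from the paper is your treatment of the contractibility hypothesis of Proposition~\ref{prop:tojoin}, and that step does not work as written. Extending $G^{00}$ to $\P_+$ with value $K$ at $+$ changes the homotopy colimit: for instance, if $\P$ is the discrete poset $\{a,b\}$ and $G^{00}$ is constant at $K$, then $\hocolim_\P G^{00}\simeq K\vee K$ while $\hocolim_{\P_+}G^{00}_+\simeq K$, since $\P_+$ now has an initial object. In general $\hocolim_{\P_+}G^{00}_+$ agrees with $\hocolim_\P G^{00}$ only when $N\P$ is contractible, which is exactly the hypothesis you were trying to avoid; nor can you run the whole zig-zag over $\P_+$, since a valuation forces $F_{+}X=\ast$, so a putative $G_+(+)=K\Rto \ast$ is no longer an equivalence. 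In fact some hypothesis of this kind is required for the statement itself: with $\P=\{a,b\}$ discrete, $\P^\circ=\emptyset$, $X=S^1\vee S^1$ valued summandwise and $K=S^1$, the $m$-connectivity hypothesis is vacuous for every $m$, yet $\hocolim_\P\tilde\nu\simeq S^1\vee S^1$ cannot map to $N\P^\circ\star K\simeq S^1$ by a $\pi_1$-isomorphism. The paper's own proof silently applies Proposition~\ref{prop:tojoin} with $\C=\P$, i.e.\ it implicitly assumes $\P$ is contractible; this holds in every application (e.g.\ $\P=\Decomp_A$ has an initial object), and the correct repair is to carry that hypothesis along rather than to pass to $\P_+$. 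Apart from this point, your argument is the paper's.
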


\begin{proof}
  Let $G: \P \rto s\Set_*$ be the functor defined by $G(A) = K$ for $A \notin
  \P^\circ$, and $G(A) = \tilde\nu(A)$ for $A\in \P^\circ$.  Then the natural
  transformation $G \Rto \nu$ (given by $\alpha$ on the portion not in
  $\P^\circ$) is a levelwise weak equivalence, and thus induces an equivalence
  on the homotopy colimit.
  
  Let $CX$ be any model of the cone on $X$.  Note that there is a natural
  transformation $\tilde \nu \rto c_{CX}$.  Let $F: \P \rto s\Set_*$ be the
  functor taking $A\notin\P^\circ$ to $K$ and $A\in \P^\circ$ to $CX$ (that
  this is well-defined follows because $\P^\circ$ is a cosieve).  Then there is
  a natural transformation $G \rto F$ which is an isomorphism on $\pi_i$ for
  $i < m$ for all $A\in \P$, which by Theorem~\ref{thm:nconn} implies that
  $\hocolim_\P \tilde \nu \rto \hocolim_\P F$ is also an isomoprhism on $\pi_i$
  for $i <m$.  $\hocolim_\P F \simeq N\P^\circ \star K$ by
  Proposition~\ref{prop:tojoin}, as desired.
\end{proof}

\subsection{Applications to spectra and $\Gamma$-sets with valuations}

The key technical result for analyzing the homotopy type of the filtered spectra
that appear in our filtrations will be the following:
\begin{corollary} \label{cor:filtered-spectrum-susp}
  Let $\P^\circ$ be a cosieve in $\P$.  Let $X$ be a symmetric spectrum of
  simplicial sets equipped with a $\P$-valuation, and suppose that for all $n$,
  if $A\notin \P^\circ$, then $\tilde \nu_n(A) \simeq S^n$ and if
  $A \in\P^\circ$, then $\tilde \nu_n(A)$ is $2n-1$-connected.  Then,
  \[X \simeq \Sigma^\infty \Sigma' N\P^\circ.\]
\end{corollary}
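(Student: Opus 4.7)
The plan is to work levelwise, using the spectrum-level Mayer--Vietoris equivalence to reduce $X$ to a homotopy colimit of the $\tilde\nu_n$, and then extract a suspension-spectrum model via Lemma~\ref{lem:m-conn} at each level. First, I would apply Corollary~\ref{cor:spec-MV} to obtain a levelwise weak equivalence $\hocolim_\P \tilde\nu \rwe X$, so it suffices to identify the homotopy type of each $\hocolim_\P \tilde\nu_n$.

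Fix $n$. The hypothesis that $\tilde\nu_n(A)\simeq S^n$ for $A\notin\P^\circ$ should be promoted to an honest natural transformation $\alpha^{(n)}\colon c_{S^n}\Rto \tilde\nu_n$ which is a levelwise weak equivalence on the sieve $\P\setminus\P^\circ$; this uses the fact that the transition maps $F_A X_n \rto F_B X_n$ between two objects outside $\P^\circ$ are, by connectivity and the hypothesis, equivalences between spaces of the homotopy type of $S^n$. Combined with the $(2n-1)$-connectivity of $F_A X_n$ for $A\in\P^\circ$, Lemma~\ref{lem:m-conn} applied with $K=S^n$ and $m=2n-1$ then produces a natural zig-zag
\[\hocolim_\P \tilde\nu_n \lwe \hocolim_\P G_n \rto N\P^\circ \star S^n\]
whose left arrow is a weak equivalence and whose right arrow is an isomorphism on $\pi_i$ for $i<2n-1$.

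Next I would identify the target. Using $X\star Y \simeq \Sigma(X\wedge Y)$, we have $N\P^\circ\star S^n \simeq S^{n+1}\wedge N\P^\circ$, which (when $N\P^\circ$ is nonempty) is precisely the $n$-th level of $\Sigma^\infty \Sigma' N\P^\circ$; when $N\P^\circ$ is empty, every $\tilde\nu_n$ is globally $S^n$-equivalent, so $X\simeq \mathbb{S}=\Sigma^\infty \Sigma'\emptyset$ already. Having assembled the levelwise zig-zags into a zig-zag of symmetric spectra $X\lwe Z \rto \Sigma^\infty\Sigma' N\P^\circ$, the induced map at level $n$ is an isomorphism on $\pi_{n+k}$ for every $k<n-1$, so by the naive stable homotopy group criterion recalled just before Theorem~\ref{thm:nconn}, letting $n\to\infty$ yields a stable equivalence of spectra.

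The main obstacle is not the individual levels, which are handled by Lemma~\ref{lem:m-conn}, but the compatibility across levels: one must choose the rigidifications $\alpha^{(n)}$ and the intermediate functors $G_n$ so that they respect the structure maps of $X$ and of $\Sigma^\infty\Sigma' N\P^\circ$, producing an actual zig-zag of symmetric spectra rather than only a levelwise collection of zig-zags. Once this coherence is in place, everything else is forced by the fact that the connectivity bound $2n-1$ grows faster than $n$, making the naive stable homotopy group criterion applicable without further work.
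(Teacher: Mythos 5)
Your proposal follows essentially the same route as the paper: replace $X$ by $\hocolim_\P\tilde\nu$ via the Mayer--Vietoris blowup (Lemma~\ref{lem:MVblow}/Corollary~\ref{cor:spec-MV}), apply Lemma~\ref{lem:m-conn} levelwise with $K=S^n$ and $m=2n-1$ to get maps to $N\P^\circ\star S^n$, and conclude a stable equivalence because the connectivity $2n-1$ outpaces $n$. The coherence issues you flag (rigidifying $\tilde\nu_n(A)\simeq S^n$ to a natural transformation $c_{S^n}\Rto\tilde\nu_n$ and assembling the levelwise zig-zags into a map of spectra) are likewise left implicit in the paper's proof, which simply asserts that the maps from Lemma~\ref{lem:m-conn} assemble into a spectral map, so your treatment is at the same level of rigor as the original.
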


\begin{proof}
  Let $\widehat X = \hocolim_\P \tilde \nu$, where $\tilde \nu(A)$ is the
  spectrum with $\tilde \nu_n(A)$ at level $n$.  (This gives a well-defined
  spectrum by the assumption that $X$ is equipped with a $\P$-valuation.)  By
  Lemma~\ref{lem:MVblow}, since homotopy colimits of spectra are pointwise,
  $X \simeq \hocolim_\P \tilde \nu$.  The maps constructed in
  Lemma~\ref{lem:m-conn} assemble into a spectral map
  $\hocolim_\P G_n \rto \Sigma^n \Sigma' N\P$.  Since the $n$-th map induces
  isomorphisms up to $\pi_{2n-1}$, these form a stable equivalence, as desired.
\end{proof}

This result can also be extended naturally to filtrations on $K$-theory of
$\Gamma$-sets.
\begin{corollary} \label{cor:gamma-space-indec} Let $\P^\circ$ be a cosieve in
  $\P$.  Let $X: \FinSet_* \rto \Set$ be a $\Gamma$-set equipped with a
  $\P$-valuation $\nu$, and suppose that for all $n$, if $A\notin \P^\circ$ then
  $F_AX(S^n)\simeq S^n$ and if $A\in\P^\circ$ then $F_AX(L)$ is $2m+1$-connected
  for any $m$-connected $L\in s\FinSet_*$ (assuming $m \geq 1$).  Then
  \[K(X) \simeq \Sigma^\infty \Sigma' N\P^\circ.\]
\end{corollary}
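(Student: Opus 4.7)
The plan is to reduce this statement to Corollary~\ref{cor:filtered-spectrum-susp} by transferring the hypotheses from the $\Gamma$-set $X$ to its $K$-theory spectrum $K(X)$. First, I would invoke Lemma~\ref{lem:valtoK}, which says precisely that a $\P$-valuation on a $\Gamma$-set $X$ induces a $\P$-valuation $\nu_\dotp$ on the spectrum $K(X)$, with $(\nu_\dotp)_n$ obtained by pulling back $\nu$ along the map $\El X(S^n) \rto \El_\Gamma X$ from Lemma~\ref{lem:precompose-int}. In particular, since taking the $p$-th filtered piece commutes with the pullback of the valuation, $F_{p}K(X)_{n} = F_p X(S^n)$ as simplicial sets.

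Next I would verify the two connectivity hypotheses of Corollary~\ref{cor:filtered-spectrum-susp} at each spectrum level. For $A \notin \P^\circ$, the equality $F_A K(X)_n = F_A X(S^n)$ combined with the first hypothesis directly gives $F_A K(X)_n \simeq S^n$. For $A \in \P^\circ$, I apply the second hypothesis with $L = S^n$: the sphere $S^n$ is $(n-1)$-connected, so taking $m = n-1$ yields that $F_A X(S^n)$ is $2(n-1)+1 = 2n-1$-connected, which is exactly the connectivity called for in Corollary~\ref{cor:filtered-spectrum-susp}.

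The only subtlety is that the hypothesis of the corollary requires $m \geq 1$, so the argument above literally applies only for $n \geq 2$; for $n = 0, 1$, the sphere $S^n$ does not meet the connectivity requirement on $L$. This is harmless: as discussed in the paragraph preceding Theorem~\ref{thm:nconn}, a map of symmetric spectra of simplicial sets is a stable equivalence as soon as the number of homotopy groups on which the level maps induce isomorphisms grows faster than the level, so behavior at any finite collection of initial levels is irrelevant. Thus the hypotheses of Corollary~\ref{cor:filtered-spectrum-susp} are satisfied for all sufficiently large $n$, which suffices. Applying that corollary yields $K(X) \simeq \Sigma^\infty \Sigma' N\P^\circ$, as desired.

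The main obstacle, then, is not a deep one but simply the bookkeeping of matching up the connectivity indices: confirming that the $\Gamma$-space hypothesis (stated in terms of arbitrary highly connected $L$) specializes correctly to the $L = S^n$ case and matches the $2n-1$-connectivity required spectrum-level, and that no pathology at low $n$ obstructs the stable equivalence.
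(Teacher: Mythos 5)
Your proposal is correct and follows essentially the same route as the paper: transfer the valuation to $K(X)$ via Lemma~\ref{lem:valtoK}, check levelwise that $F_AK(X)_n = F_AX(S^n)$ satisfies the $S^n$ (resp.\ $(2n-1)$-connectivity) hypotheses, and conclude by Corollary~\ref{cor:filtered-spectrum-susp}. Your extra remark about the low levels $n=0,1$ (where the $m\geq 1$ restriction prevents the hypothesis from applying) is a point the paper glosses over, and your resolution via stable equivalence being insensitive to finitely many initial levels is sound.
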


\begin{proof}
  By Lemma~\ref{lem:valtoK}, a $\P$-valuation on $X$ induces a $\P$-valuation on
  $K(X)$.  Moreover, since $K(X)_n = X(S^n)$, it is the case that for
  $A\notin\P^\circ$, $F_AK(X) \simeq \S$, and for $A\in \P^\circ$,
  $F_AK(X) \simeq *$.  Thus, by Corollary~\ref{cor:filtered-spectrum-susp} we
  have
  \[K(X) \simeq \hocolim_\P \tilde \nu \simeq \Sigma^\infty \Sigma' N\P^\circ,\]
  as desired.
\end{proof}

\subsection{A note about $G$-equivariance}

In Section~\ref{sec:subobjects} we will want to work in spaces (and spectra)
equipped with a $G$-action for some discrete group $G$.  All of the results of
the previous section work if we replace ordinary functors $F: \I \rto \Set$ with
functors $F: \I \rto G\Set$ with values in the category of $G$-sets.  If
$F:\I \rto G\Set$ is a functor then $\El_\I F$ is a $G$-category.

\begin{definition}
  Given a functor $F: \I \rto G\Set$, and a $G$-poset $\P$, a $G$-equivariant
  $\P$-valuation is an equivariant functor $\nu:\El_\I F \rto \P_+$ (where $G$
  acts on the new initial object $+$ trivially).
\end{definition}

They key result we will need is the following:
\begin{corollary} \label{cor:Gsusp}
  Let $\P^\circ$ be a cosieve in $\P$ such that $\P^\circ$ is itself a
  $G$-category.  Let $X: \FinSet_* \rto G\Set$ be a $G$-equivariant $\Gamma$-set
  equipped with a $G$-equivariant $\P$-valuation $\nu$, and suppose that for all
  $n$, if $A \notin \P^\circ$ then there is a $G$-equivariant equivalence
  $F_AX(S^n) \simeq S^n$ (where $G$ acts trivially on $S^n$) and if $A\in
  \P^\circ$ then $F_A X(L)$ is $2m+1$-connected for any $m$-connected $L\in
  sG\FinSet_*$ (assuming $m \geq 1$).  Then 
  \[K(X) \simeq \Sigma^\infty \Sigma' N\P^\circ\]
  $G$-equivariantly.
\end{corollary}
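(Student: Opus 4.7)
My plan is to reproduce the proof of Corollary~\ref{cor:gamma-space-indec} step by step, tracking the $G$-action through each intermediate construction. First, I would verify that Lemma~\ref{lem:valtoK} lifts to the equivariant setting: given a $G$-equivariant $\P$-valuation $\nu$ on $X$, the induced valuations $\nu_n$ on $X(S^n) = K(X)_n$ are $G$-equivariant because $S^n$ carries the trivial $G$-action. Compatibility with the structure maps is automatic, since $\sigma_n$ is built from the inclusions $\mathrm{inc}_i : S^n \rto S^{n+1}$, which are equivariant for the trivial actions on source and target.

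Second, I would check that Lemma~\ref{lem:MVblow} and Corollary~\ref{cor:spec-MV} go through equivariantly. Because $\nu$ is $G$-equivariant, each subfunctor $F_p X$ is $G$-stable, so the bisimplicial model of $\hocolim_\P \tilde \nu$ inherits a natural $G$-action. The fiberwise contractibility argument identifying the fibers $G_{n,x}$ with $N\P_{\nu(x)/}$ is insensitive to the group action. This gives a $G$-equivariant weak equivalence $\hocolim_\P \tilde \nu \rwe K(X)$ of spectra.

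Third, the auxiliary functor $G : \P \rto s\Set_*$ of Lemma~\ref{lem:m-conn} can be constructed equivariantly because $\P^\circ$ is a $G$-stable cosieve, so the case split between $A \in \P^\circ$ and $A \notin \P^\circ$ is $G$-invariant. I would choose the cone functorially, for instance $C(-) = (-) \wedge I_+ / (-) \wedge \{1\}_+$, so that it carries a natural $G$-action; the target functor $F$ and the comparison natural transformation are then equivariant. The identification $\hocolim_\P F \simeq N\P^\circ \star K$ from Proposition~\ref{prop:tojoin} is natural enough to be $G$-equivariant once the cover data are, and the $G$-action on $N\P^\circ$ is the one induced from the $G$-action on $\P^\circ$.

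The main subtlety is the use of Theorem~\ref{thm:nconn} equivariantly. Since the hypothesis $F_A X(S^n) \simeq S^n$ is stated as an \emph{underlying} weak equivalence realized by a $G$-map, I would apply Theorem~\ref{thm:nconn} to the underlying diagrams of simplicial sets while retaining the $G$-action on each level; the resulting maps are then automatically $G$-equivariant and simultaneously induce isomorphisms on underlying homotopy groups in the relevant range. Assembling these levelwise maps into a spectrum map exactly as in the proof of Corollary~\ref{cor:filtered-spectrum-susp}, and using that the $(2n-1)$-connectivity hypothesis on the $\P^\circ$-parts makes the range of $\pi_\ast$-isomorphism grow faster than $n$, produces a stable equivalence. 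The output is a $G$-equivariant identification $K(X) \simeq \Sigma^\infty \Sigma' N\P^\circ$.
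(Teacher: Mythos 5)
Your argument is essentially the paper's: the paper gives no separate proof of this corollary, merely asserting that the results of Section~\ref{sec:valuation} carry over to $G\Set$-valued functors, and your proposal traces the $G$-action through exactly the same chain (Lemma~\ref{lem:valtoK}, Lemma~\ref{lem:MVblow}, Lemma~\ref{lem:m-conn}, Proposition~\ref{prop:tojoin}, Theorem~\ref{thm:nconn}) that proves Corollary~\ref{cor:gamma-space-indec}. One small imprecision: the pieces $F_pX$ are not individually $G$-stable unless $p$ is $G$-fixed, since equivariance of $\nu$ gives $g\cdot F_pX = F_{g\cdot p}X$; what actually makes the Mayer--Vietoris blowup and the comparison functors equivariant is that $\tilde\nu$ is a $G$-diagram over the $G$-poset $\P$ (with $G$ permuting the summands of the bisimplicial model, and the case split in Lemma~\ref{lem:m-conn} being $G$-invariant because $\P^\circ$ and its complement are $G$-stable) --- with that rephrasing your proof is the intended one.
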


\section{Convenient addition categories and their $K$-theory} \label{sec:convenient}

In this section we introduce convenient addition categories and their
$K$-theory.  We describe the $K$-theory in two ways: via the
$U_\dotp$-construction, which is analogous to Waldhausen's
$S_\dotp$-construction, and via a $\Gamma$-space.  We will not use the
$U_\dotp$-construction in later parts of the paper, but we include it as it is
sometimes easier for those familiar with the $S_\dotp$-construction to
understand, and it is isomorphic to the $\Gamma$-space construction in relevant
parts (as shown in Proposition~\ref{prop:DtoU}).

\subsection{Basic definitions and examples}

In this section we define the basic object of study of this paper: convenient
addition categories.  This class of categories is one in which addition is
``convenient'': it is a symmetric monoidal structure which also has some of the
properties of a pushout.

\begin{definition} \label{def:convenient}
  A \emph{convenient addition category} is a symmetric monoidal category
  $(\A,\oplus, \initial)$ such that the following conditions hold:
  \begin{itemize}
  \item[(CA1)] $\initial$ is initial in $\A$ and all morphisms in $\A$ are
    monic.
  \item[(CA2)] A morphism $A\oplus B \rto C$ is uniquely determined by the induced
    morphisms $A \rcofib A\oplus B \rto C$ and $B \rcofib A\oplus B \rto C$.
  \item[(CA3)] For all $A,B\in \A$, $A\times_{A\oplus B} B = \initial$.
  \end{itemize}
\end{definition}

Many symmetric monoidal categories produce convenient addition categories when
restricted to the subcategory of monomorphisms.  The idea of convenient addition
categories is that instead of restricting to isomorphisms in order to take the
$K$-theory of a symmetric monoidal categories, we can instead restrict to the
subcategory of monomorphisms (or a nice subcategory thereof) in order to retain
some extra tools for analysis. The $U_\dotp$-construction, introduced in the
next section, is from this perspective simply a model for the bar construction.  


\begin{example} \label{ex:finset}
  The category $\FinSet^{inj}$ of finite sets, injections, and disjoint union is a
  convenient addition category.
\end{example}

\begin{example} Let $k$ be an ordered field. The category $\Inn_k$ of
  finite-dimensional inner product spaces, isometric inclusions, and orthogonal
  sums is convenient addition category.
\end{example}

\begin{example}
  Let $\E$ be an exact category, with $m\E$ the subcategory containing all
  admissible monomorphisms.  We claim that $m\E$ is a convenient addition
  category if we define $\oplus$ to be the coproduct in $\E$.  Then (CA1) holds
  because in any category with a zero object, if a morphism $X \rto 0$ is monic
  then it is an isomorphism.  Let $\oplus$ be the coproduct in $\E$.  (CA2)
  holds because any monoidal structure determined by coproduct (possible in a
  larger category) satisfies this property. (CA3) holds because it's true in the
  category of $R$-modules of any ring $R$, and any exact category is a
  subcategory of modules.

  In particular, for a commutative ring $R$, the category $\Mod_R$ of free
  finite rank $R$-modules and injective linear maps, together with direct
  sum of modules, is a convenient addition category.  In the case when $R$ is
  field we will write $\Vect_R$. 
\end{example}

\begin{non-example}
  Consider $\Z_{\geq 0}$ as a poset, with $\initial = 0$, and define $\oplus$ to
  be addition.  Then (CA1) and (C2)
  hold, but (CA3) does not hold.

  More generally let $\C$ be a poset with an initial object and closed under
  coproducts.  Define $\oplus$ to be the pushout.  Then (CA1) holds by
  definition, and (CA2) holds because they hold in any poset.  However, there
  is no reason for (CA3) to hold.  For example, in the poset consisting of a
  single commutative square plus a new initial object, (CA3) does not hold.
\end{non-example}

A more general example illustrating a general method of constructing convenient
addition categories from symmetric monoidal categories is the following:

\begin{example}
  Let $\C$ be any category closed under pushouts which has an initial object
  $\initial$.  Moreover, suppose that in $\C$, pushout squares are also pullback
  squares, and pushouts preserve monomorphisms.  Then the subcategory of
  monomorphisms of $\C$, with $\oplus$ defined as the pushout in $\C$, is a
  convenient addition category.  Example~\ref{ex:finset} is an example of such
  as structure.
\end{example}

\subsection{The $K$-theory of a convenient addition category and the $U_\dotp$-construction}

We give two models for the $K$-theory of a convenient addition category.  The
first of these is based off of Waldhausen's $S_\dotp$-construction; although
this is clearly well-known to experts, we present it in detail so that the
analogy to the $S_\dotp$-construction is clear.  It's closely related to the iterated bar construction (see for example \cite{kadeishvili98, fresse11}) and other $E_k$-constructions such as \cite[Definition 17.17]{galatius_kupers_randal-williams18}.   In the next subsection we will
give a different presentation of this same $K$-theory using a $\Gamma$-space.

\begin{definition}
  Let $(\A,\oplus,\initial)$ be a convenient addition category.  We define the
  category $U_{n}\A$ to have
  \begin{description}
  \item[objects] Tuples
    $(A_1,\ldots,A_n, \{(S_{ij},\gamma_{ij})\}_{1 \leq i < j \leq n} )$, where
    $A_i\in \ob \A$ and $\gamma_{ij}: S_{ij} \rto^{\cong} \bigoplus_{k=i}^j
    A_k$.  We often write $(A,S,\gamma)_n$ for such an object in $U_n\A$. 
  \item[morphisms] A morphism
    $(A_1,\ldots,A_n,\{(S_{ij},\gamma_{ij})\}) \rto (A_1', \ldots,A_n',
    \{(S'_{ij},\gamma_{ij})\})$ is a tuple of morphisms $\phi_i: A_i \rto A_i'$.
    Note that the $\phi_i$ induce, for all $i < j$, unique morphisms
    $\phi_{ij}: S_{ij} \rto S'_{ij}$ making the diagram
    \begin{diagram}[5em]
      { S_{ij} & S'_{ij} \\
        \bigoplus_{k=i}^j A_k & \bigoplus_{k=i}^j A'_k\\};
      \arrowsquare{\phi_{ij}}{\gamma_{ij}}{\gamma'_{ij}}{\bigoplus_{k=i}^j \phi_k}
    \end{diagram}
    commute.
  \end{description}
  The category $U_n\A$ is itself symmetric monoidal with the monoidal structure
  given pointwise, as it is equivalent to the category $\A^n$.

  The category $U_0\A$ is the trivial category with only one object $(I, \{\})$.

  For all $i<j<k$ we write $\gamma_{ijk}: S_{ij} \rto S_{ik}$ for the induced
  morphism
  \[S_{ij} \rto^{\gamma_{ij}} \bigoplus_{\alpha=i}^j A_\alpha
    \setlen{13em}{\rto^{\bigoplus_{\alpha=i}^j 1_{A_\alpha} \oplus \bigoplus_{\alpha=j+1}^k
      (\initial \rightarrow A_\alpha)} \bigoplus_{\alpha=i}^k A_\alpha}
    \rto^{\gamma_{ik}^{-1}} S_{ik}.\]
\end{definition}

The categories $U_n\A$ fit into a simplicial category, where $d_k: U_n \A \rto
U_{n-1}\A$ are defined for $0 < k < n$ by
\[d_k(A_1,\ldots,A_n,\{S_{ij}\}) = (A_1,\ldots,S_{k(k+1)},A_{k+2},\ldots,A_n,
  \{(S_{ab}',\gamma_{ab}')\}),\]
where
\[S'_{ab} =
  \begin{cases}
    S_{ab} & b < k, \\
    S_{(a+1)(b+1)} & a > k, \\
    S_{a(b+1)} & a \leq k \leq b,
  \end{cases}
\]
and $\gamma'_{ab}$ is defined analogously.
We also define $d_0: U_n \A \rto U_{n-1}\A$ by
\[d_0(A_1,\ldots,A_n, \{(S_{ij},\gamma_{ij})\}) = (A_2,\ldots,A_n,
  \{(S_{(i+1)(j+1)},\gamma_{(i+1)(j+1)}\})\] and
\[d_n(A_1,\ldots,A_n,\{(S_{ij},\gamma_{ij})\}) = (A_1,\ldots,A_{n-1},\{(S_{ij},\gamma_{ij})\}).\]
Degeneracies $s_i$ are defined by adding an $I$ to the $i$-th slot and choosing
the $S_{ab}$ to disregard it.

\begin{remark} \label{rem:U.bar}
  The $U_\dotp$-construction is simply a model for the bar construction in the
  case where the sum $\oplus$ is not strictly associative.  Thus it is the case
  that $|iU_\dotp\A| \simeq B|i\A|$.
\end{remark}

Analogously to the case of Waldhausen categories and the $S_\dotp$-construction,
$U_\dotp\A$ is a simplicial symmetric monoidal category with the monoidal
structure induced by pointwise $\oplus$, so the construction can be iterated.

\begin{proposition}
  \[|NiU_\dotp\A| \simeq \Omega |NiU_\dotp U_\dotp\A|.\]
\end{proposition}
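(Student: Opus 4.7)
The plan is to leverage Remark~\ref{rem:U.bar} and iterate it. That remark identifies $|iU_\dotp\A|$ with the bar construction $B|i\A|$ on the topological monoid $|i\A|$; applying the same reasoning with $\A$ replaced by the simplicial symmetric monoidal category $U_\dotp\A$ (using the pointwise $\oplus$ noted above) should give an analogous identification at the iterated level, and the desired loop-space statement will then follow from the fact that the bar construction of a connected $H$-space deloops it.

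First, I would establish the following extension of Remark~\ref{rem:U.bar}: for any simplicial symmetric monoidal category $\mathcal{B}_\bullet$, there is a natural equivalence
\[|NiU_\dotp\mathcal{B}_\bullet| \simeq B|Ni\mathcal{B}_\bullet|.\]
This is a Fubini-type argument. The object $NiU_\dotp\mathcal{B}_\bullet$ is bisimplicial; applying Remark~\ref{rem:U.bar} in the simplicial direction coming from $U_\dotp$ for each level of $\mathcal{B}_\bullet$, and then commuting the two geometric realizations, yields the claim. Applying this with $\mathcal{B}_\bullet = U_\dotp\A$ (which is a simplicial symmetric monoidal category via the pointwise monoidal structure discussed above) gives
\[|NiU_\dotp U_\dotp\A| \simeq B|NiU_\dotp\A|.\]

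Second, I would invoke the standard delooping property of the bar construction. By Remark~\ref{rem:U.bar}, $|NiU_\dotp\A| \simeq B|i\A|$ is path-connected (bar constructions of topological monoids are always connected), and any connected $H$-space is automatically group-like. Therefore the classical result that $\Omega BX \simeq X$ for group-like $H$-spaces $X$ applies with $X = |NiU_\dotp\A|$, and combining with the previous step yields
\[\Omega|NiU_\dotp U_\dotp\A| \simeq \Omega B|NiU_\dotp\A| \simeq |NiU_\dotp\A|,\]
as desired.

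The main obstacle is justifying the Fubini swap in the first step, in particular verifying that geometric realization in the outer simplicial direction genuinely commutes with the bar construction arising from $U_\dotp$ up to the appropriate equivalence. An alternative route that sidesteps this is a direct simplicial path-space argument, in the spirit of Waldhausen's delooping proof for the $S_\dotp$-construction: form the simplicial path construction on $U_\dotp U_\dotp\A$ in the outer simplicial direction, observe that its nerve is contractible via the extra degeneracy $s_{-1}$, identify the fiber of the residual $d_0$ over the basepoint with $NiU_\dotp\A$, and verify that the resulting sequence is a quasi-fibration---where the quasi-fibration hypothesis is again ensured by the group-likeness of the (connected) base. Either approach delivers the stated equivalence.
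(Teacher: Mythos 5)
Your argument is correct, but it takes a genuinely different route from the paper. The paper follows Waldhausen's Proposition 1.5.5 pattern: it forms the path-object $U_\dotp U_{\dotp+1}\A$ (contractible via the extra degeneracy), checks in each simplicial degree that $U_\dotp U_{n+1}\A$ is equivalent to the product $U_\dotp\A \times U_\dotp U_n\A$ compatibly with the inclusion and $d_0$, and concludes that $|NiU_\dotp\A| \rto |NiU_\dotp U_{\dotp+1}\A| \rto |NiU_\dotp U_\dotp\A|$ is a homotopy fiber sequence, which yields the delooping directly; note that your ``alternative route'' at the end is essentially this proof. Your main route instead identifies the outer $U_\dotp$ direction as a bar construction on the connected (hence group-like) H-space $|NiU_\dotp\A|$ and invokes $\Omega B X \simeq X$. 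That is legitimate given Remark~\ref{rem:U.bar}: each $U_n\A$ is symmetric monoidal (indeed equivalent to $\A^n$, which is again a convenient addition category), so the levelwise identification $|NiU_\dotp U_n\A| \simeq B|NiU_n\A|$ makes sense, and connectedness of a bar construction gives group-likeness for free. What you buy is a shorter, more conceptual explanation of why the second $U_\dotp$ deloops (the first one already lands in connected spaces); what you pay is exactly the obstacle you flag, namely making the levelwise bar identifications natural in the outer simplicial variable (the comparison in the paper goes through a monoidal strictification and a choice of comparison functor, so a zig-zag natural in $[n]$ has to be arranged) plus the appeal to strict monoid models so that $B$ and $\Omega B X \simeq X$ apply literally; these are standard but not free, and they import the strictification machinery that the paper only uses later, in Theorem~\ref{thm:grpcomp}. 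The paper's argument stays self-contained in the simplicial framework, and its levelwise-product mechanism (rather than ``quasi-fibration from group-likeness of the base,'' as in your fallback sketch) is what lets it iterate immediately to $U_\dotp^{(k)}\A$ in the subsequent corollary.
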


\begin{proof}
  (Following \cite[Proposition 1.5.5]{waldhausen}, as usual.)  We show that the
  sequence
  \[|NiU_\dotp \A| \rto |NiU_\dotp U_{dotp+1}\A| \rto |NiU_\dotp U_\dotp\A|\] is
  a homotopy fiber sequence.  From this the proposition will follow, as
  $|NiU_0| = *$, the middle object is contractible and therefore
  $|Ni U_\dotp\A| \simeq \Omega |NiU_\dotp U_\dotp\A|$.

  The sequence is defined in the following manner.  Using the fact that
  $U_1\A \cong \A$, we use the inclusion
  $U_\dotp U_1\A \rto U_\dotp U_{\dotp+1}\A$ to define the first map.  The
  second map is defined by the map $d_0: U_{\dotp+1}\A \rto U_\dotp \A$.  In the
  following we drop the $|Ni-|$, as well as the choices of $\gamma$-maps, to
  clean up the notation.  Following the referenced proof, it suffices to check
  that the sequence
  \[U_\dotp \A \rto U_\dotp U_{n+1}\A \rto U_\dotp U_n \A\]
  is a homotopy fiber sequence.  We define a map 
  \[U_k \A \times U_k U_n \A \rto U_k U_{n+1}\A\]
  on objects by
  \[(A_1,\ldots,A_k, \{S_{ij}\}) \times ((B_{j\ell})_{1\leq j \leq k, 1 \leq
      \ell \leq n}, \{T_{ij}\}) \rgoesto ((C_{j\ell})_{1\leq j \leq k, 1 \leq
      \ell \leq n+1}, \{V_{ij}\})\]
  where
  \[C_{j\ell} =
    \begin{cases}
      A_j \caseif \ell=1 \\
      B_{j(\ell+1)} \caseotherwise.
    \end{cases}\] and $V_{ij}$ is an $n+1$-tuple where the first entry is
  $S_{ij}$ and the last $n$ entries come from $T_{ij}$.  As morphisms are
  pointwise, if we extend this pointwise to morphisms it gives a functor.
  Moreover, for all $k$ it is full, faithful, and essentially surjective, since
  all objects associated to the same tuple of objects are canonically
  isomorphic.  Thus after applying $|Ni-|$ it is a weak equivalence.  

  Consider the following diagram:
  \begin{diagram}
    { U_\dotp \A & U_\dotp \A \times U_\dotp U_n \A & U_\dotp U_n \A \\
      U_\dotp \A & U_\dotp U_{n+1} \A & U_\dotp U_n \A \\};
    \to{1-1}{1-2} \to{1-2}{1-3}
    \to{2-1}{2-2} \to{2-2}{2-3}
    \eq{1-1}{2-1} \eq{1-3}{2-3}
    \to{1-2}{2-2}
  \end{diagram}
  The left-hand square commutes because with either composition it takes an
  object $A$ to the $n+1$-tuple $(A,I,\ldots,I)$.  The right-hand square
  commutes because it takes a pair
  $(A_1,\ldots,A_k, S_{ij}) \times ((B_{j\ell}), T_{ij})$ to
  $((B_{j\ell}), T_{ij})$, either by first assembling into a single object and
  then dropping the first column, or else by just projecting.  The top row is
  trivially a fibration sequence.  As the middle map is a weak equivalence, the
  bottom row is therefore a homotopy fibration sequence, as desired.
\end{proof}

In an analogous fashion we can prove the following:
\begin{corollary}
  For $k\geq 1$,
  \[|NiU_\dotp^{(k)}\A| \simeq \Omega |Ni U_\dotp^{(k+1)}\A|.\]
\end{corollary}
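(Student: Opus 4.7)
The plan is an induction on $k$, with the base case $k=1$ being the preceding proposition. Before starting the induction, I would first establish an auxiliary closure property: since $U_n\A$ is equivalent as a symmetric monoidal category to $\A^n$, and since each of the axioms (CA1)--(CA3) is preserved under finite products, $U_n\A$ is again a convenient addition category. Consequently, $U_\dotp^{(j)}\A$ inherits the structure of a $j$-fold simplicial convenient addition category for every $j \geq 0$, and in particular the preceding proposition is applicable to $U_{\vec n}^{(j)}\A$ for any multi-index.

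For the inductive step, assuming the result for $k-1$, I would replay the proof of the preceding proposition in the outermost simplicial direction. Fixing a multi-index $\vec n = (n_1,\ldots,n_{k-1})$ for the inner simplicial coordinates, the exact argument given above produces a homotopy fiber sequence
\[|NiU_\dotp (U_{\vec n}^{(k-1)}\A)| \rto |NiU_\dotp U_{\dotp+1}(U_{\vec n}^{(k-1)}\A)| \rto |NiU_\dotp U_\dotp (U_{\vec n}^{(k-1)}\A)|\]
whose middle term is contractible (the path-space argument using the extra degeneracy in the $U_{\dotp+1}$-coordinate). These sequences are natural in $\vec n$, so by realizing over the remaining $k-1$ simplicial directions one obtains the desired delooping $|NiU_\dotp^{(k)}\A| \simeq \Omega |NiU_\dotp^{(k+1)}\A|$.

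The main obstacle is verifying that both the homotopy fiber sequence and the contractibility of the middle term survive the realization over the extra $k-1$ simplicial directions. For contractibility, this is essentially automatic: the extra-degeneracy argument applies coordinate-wise at each multi-index, hence levelwise, and geometric realization preserves contractibility. For the fiber sequence, the auxiliary functor $U_k\A \times U_k U_n \A \rto U_k U_{n+1}\A$ in the preceding proof is full, faithful, and essentially surjective degreewise, so after $|Ni{-}|$ it remains a levelwise equivalence in every remaining simplicial coordinate, and realizing preserves both the equivalence and the resulting homotopy fiber sequence (via stability of homotopy pullbacks along the realization in each direction). Once these two points are in hand, the standard argument from the proposition — that the end-spaces of the contractible-middle fiber sequence exhibit one as the loop space of the other — immediately yields the corollary.
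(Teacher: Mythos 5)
Your proposal is correct and is exactly what the paper intends by its one-line proof (``in an analogous fashion''): replay the fiber-sequence argument of the preceding proposition with $\A$ replaced levelwise by $U_{\vec n}^{(k-1)}\A$, and check that the levelwise equivalences and the contractibility of the middle term survive realization over the remaining simplicial directions. Your preliminary observation that $U_n\A$ is again a convenient addition category (being equivalent to $\A^n$) is a reasonable way to justify iterating the construction, consistent with the paper's remark that $U_\dotp\A$ is a simplicial symmetric monoidal category.
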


Define the spectrum $U(\A)$ to have
\[U(\A)_k = |Ni U_\dotp^{(k)}\A|.\] Then $U(\A)$ is an $\Omega$-spectrum above
level $0$.  We can think of $U^{(n)}_{\dotp}(\A)$ as containing $n$-dimensional
grids of objects in $\A$, together with choices for all sums of
``$n$-dimensional subgrids.''  

\begin{theorem} \label{thm:grpcomp}
  \[\Omega^\infty U(\A) \simeq \Omega B|i\A|.\]
\end{theorem}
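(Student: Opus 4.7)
The plan is to combine two ingredients established just above: the preceding corollary showing that $U(\A)$ is an $\Omega$-spectrum above level $0$, and Remark~\ref{rem:U.bar} which identifies $|NiU_\dotp\A|$ with the classifying space $B|i\A|$ of the topological monoid structure that $\oplus$ induces on $|i\A|$. Nothing beyond these two facts should be needed.

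First, I would unwind $\Omega^\infty U(\A)$ as the sequential homotopy colimit of the spaces $\Omega^n U(\A)_n$ along the maps $\Omega^n U(\A)_n \rto \Omega^{n+1} U(\A)_{n+1}$ induced by the adjoints of the structure maps. The corollary just proved implies that the adjoint structure map $U(\A)_n \rto \Omega U(\A)_{n+1}$ is a weak equivalence for every $n\geq 1$; applying $\Omega^n$ to such a map yields a weak equivalence, so each connecting map in the colimit beyond level $1$ is a weak equivalence. The canonical comparison $\Omega U(\A)_1 \rto \Omega^\infty U(\A)$ is therefore a weak equivalence. Invoking Remark~\ref{rem:U.bar} to identify
\[\Omega U(\A)_1 \;=\; \Omega |NiU_\dotp\A| \;\simeq\; \Omega B|i\A|\]
then yields the claim.

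The proof is essentially bookkeeping, so I do not expect a serious obstacle. The one delicate point worth monitoring is that the equivalence of Remark~\ref{rem:U.bar} should be arranged as an equivalence of pointed spaces (based at the unique $0$-simplex $(I,\emptyset)$ on the left and the standard basepoint of the bar construction on the right), so that applying the functor $\Omega$ in the final step is legitimate. This is automatic from the standard comparison between the $U_\dotp$-model and the simplicial bar construction on the (not strictly associative) monoid $i\A$, which is a levelwise equivalence of pointed bisimplicial sets.
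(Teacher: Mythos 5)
Your reduction is the same one the paper makes: since $U(\A)$ is an $\Omega$-spectrum above level $0$, the theorem comes down to the space-level identification $|NiU_\dotp\A| \simeq B|i\A|$, and your colimit bookkeeping for $\Omega^\infty$ is fine. The gap is what you do next: you dispose of that identification by citing Remark~\ref{rem:U.bar} and calling the rest routine. But that remark is an unproved assertion in the paper, and the proof of Theorem~\ref{thm:grpcomp} is exactly where it gets justified — so as written your argument assumes the substantive content of the theorem. Your parenthetical justification, that the comparison with ``the simplicial bar construction on the (not strictly associative) monoid $i\A$'' is automatic and a levelwise equivalence of pointed bisimplicial sets, does not stand on its own: because $\oplus$ is not strictly associative or unital, there is no simplicial bar construction on $i\A$ to compare with in the first place, so $B|i\A|$ has to be given a model before any levelwise comparison can be made.

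The paper fills this in by strictifying: let $\D$ be a monoidal strictification of $\A$, so that $B|i\A|$ can be modeled by the bar construction $B(*,i\D,*)$, whose level $n$ is $\D^n$; functoriality of $U_\dotp$ gives $|iU_\dotp\A|\simeq|iU_\dotp\D|$; and then one defines a simplicial functor $\D^n \to U_n\D$ by choosing the partial sums $S_{ij}$ via the strict structure and checks that it is full, faithful and essentially surjective — this is where (CA2) is used, since morphisms in $U_n\D$ are determined by their components and all choices of the $S_{ij}$ for a fixed tuple are canonically isomorphic. This levelwise equivalence of categories yields $|iU_\dotp\D|\simeq B(*,i\D,*) = B|i\D| \simeq B|i\A|$. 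If you supply this strictification-and-comparison argument (or an equivalent one), your outline becomes the paper's proof; without it, the key step is missing rather than merely delicate.
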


\begin{proof}
  Since $U(\A)$ is an $\Omega$-spectrum above level $0$, it suffices to check
  that $|iU_\dotp\A| \simeq B|i\A|$.  Let $\D$ be the monoidal strictification
  of $\A$; then we can make a model of $B|i\A|$ by applying the bar construction
  to $i\D$ relative to this monoidal structure; this will have, at level $n$,
  $\D^n$.  Since $U_\dotp$ is functorial, $|iU_\dotp\A|\simeq |iU_\dotp
  \D|$. There is a functor $\D^n \rto U_n\D$ given by choosing the objects
  $S_{ij}$ using this strictly associative monoidal structure.  As this is full,
  faithful, and essentially surjective, this is an equivalence of categories.
  It is also a simplicial functor, and thus induces an equivalence on
  realizations.  Thus we have an equivalence
  \[|iU_\dotp\A| \simeq |iU_\dotp\D| \simeq B(*,i\D,*) = B|i\D| \simeq B|i\A|,\]
  as desired.
\end{proof}

\begin{example}
  Let $R$ be a ring, and $\Proj_R$ the category of finitely-generated projective
  $R$-modules.  Then $|iU_\dotp\A| \simeq B |i\A|$, and thus
  $\Omega |iU_\dotp\A| \simeq \Omega B |i\A|$.  Thus in particular
  $U(\Proj_R) \simeq K(R)$. 

  By an abuse of notation, we write $U(R)$ for $U(\Proj_R)$.

  Analogously we can consider $U(\Free_R)$, whose homotopy groups will agree
  with those of $U(\Proj_R)$ apart from at $\pi_0$.
\end{example}

\begin{example}
  Let $F$ be an ordered field.  Write $\Orth_F$ for the category of
  finite-dimensional inner product spaces over $F$ and isometric injections,
  with the monoidal structure given by orthogonal sums.  The $U(\Orth_F)$ will
  be the group completion of $|i\Orth_F|$ which, as discussed in \cite[Theorem 2.2]{eberhardt22} (and \cite[Theorem IV.4.9]{weibel_kbook}, will be $\Z \times BO(F)^+$.
\end{example}

For later reference it will be useful to give some ``all at once'' notation for
objects in $U_\dotp^{(n)}(\A)$.

\begin{definition}
  For an object $X = (A,S,\gamma)_n \in U_n(\A)$, write $X_i \defeq A_i$.
  Define
  \[\max X \defeq
    S_{1n}.\] For each $i\leq j$ there is an induced map
  $\phi_i: X_i \rto \max X$.
  
  For an object $X \in U_{k_1} \cdots U_{k_n}(\A)$, 

  For $n > 1$, and $X = (A,S,\gamma)_{k_1}\in \in U_{k_1}\cdots U_{k_n}(\A)$,
  write $X_{i_1\cdots i_n}$ for the object $(X_{i_1})_{i_2\cdots i_n}$.  If we
  think of $X$ as a $k_1 \times\cdots\times k_n$ grid of objects, this is simply
  pulling out the $(i_1,\ldots,i_n)$-th coordinate.  Also, let 
  \[\max X \defeq \max S_{1n}. \]
  Informally, $\max X$ is the sum of all of the $A$'s in all of the coordinates
  of $X$.  Then there is an induced map
  $\phi_{i_1\cdots i_n}: X_{i_1\cdots i_k} \rto \max X$.
\end{definition}

\subsection{$K$-theory of a convenient addition category via a special
  $\Gamma$-category}

In this section we give a different construction of the $K$-theory of a
convenient addition category $\A$ using a $\Gamma$-space.  This definition was clearly understood by Segal in \cite{segal74}, and is very closely related to the construction described in \cite[Section 4]{elmendorfmandell}, and will therefore be very familiar to experts in the field.  However, as we could not find a description for the general symmetric monoidal case given in the literature, we describe it here.

\begin{definition}
  For any $A,B\in \A$, write $\inc_{A,B}: A \rto A\oplus B$ for the morphism $A
  \cong A \oplus \initial \rto A \oplus B$.  By an abuse of notation we will
  sometimes also write $\inc_{A,B}$ for the morphism $A \rto B\oplus A$ given
  by composing $\inc_{A,B}$ with the symmetry.  
\end{definition}

\begin{definition}
  For any finite pointed set $S$, let $\I_S$ be the category subsets of $S$
  which do \emph{not} include the basepoint, and inclusions.  Given a function
  $f: S \rto T$ there is a functor $f^*:\I_T \rto \I_S$ given by
  $B \rgoesto f^{-1}(B)$; note that since $B$ does not include the basepoint,
  $f^{-1}(B)$ will also not include the basepoint, so this is well-defined.

  Let $\A$ be a convenient addition category. A functor $F: \I_S \rto \A$ is an
  \emph{addition functor} if $F(\emptyset) = \initial$, and for every disjoint
  $T,T'\in \I_S$, there exists an isomorphism
  $\phi_{T,T'}:F(T) \oplus F(T') \rto F(T \cup T')$ (which is unique by (CA2))
  making the diagram
  \begin{diagram}
    { F(T)  \\
      F(T) \oplus F(T')  & F(T \cup T')\\
      F(T')\\};
    \to{1-1}{2-1}_{\inc_{F(T),F(T')}} \to{3-1}{2-1}^{\inc_{F(T'),F(T)}} \to{2-1}{2-2}^\phi
    \to{1-1}{2-2}^{F(T \subseteq T\cup T')} \to{3-1}{2-2}_{F(T' \subseteq T \cup T')}
  \end{diagram}
  commute.  (If $\cup$ were defined for all pairs of morphisms, this would
  simply be the statement that $F$ is strong symmetric monoidal.)
\end{definition}

\begin{definition}
  Let $\A$ be a convenient addition category.  Define $D_\A: \FinSet_* \rto
  \Cat$ to send a set $S$ to the category of addition functors $\I_S \rto \A$
  and natural isomorphisms, which we denote by $[\I_S,\A]_{add}$.  A function
  $f: S \rto T$ induces a functor $[\I_S,\A]_{add} \rto [\I_T,\A]_{add}$ by
  precomposition with $f^*$.
\end{definition}

\begin{lemma}
  $D_\A$ is well-defined and is a special $\Gamma$-category.
\end{lemma}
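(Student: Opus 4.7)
The plan is to prove two things: that $D_\A$ is a well-defined functor $\FinSet_* \rto \Cat$, and that the Segal maps are equivalences of categories. For well-definedness, I would first check that for any $f: S \rto T$ in $\FinSet_*$, precomposition with $f^*$ sends addition functors to addition functors. The key observation is that the preimage assignment $f^*$ preserves $\emptyset$, disjoint unions, and disjointness, so if $F: \I_S \rto \A$ is an addition functor, then for disjoint $T_1, T_2 \in \I_T$ the required isomorphism
\[(F\circ f^*)(T_1) \oplus (F\circ f^*)(T_2) \rto (F \circ f^*)(T_1 \cup T_2)\]
is supplied directly by the addition isomorphism of $F$ at the disjoint subsets $f^*(T_1), f^*(T_2)$, and the coherence diagram for $F\circ f^*$ is pulled back from that for $F$. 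Functoriality in $f$ follows from $(gf)^* = f^*g^*$ on subsets, and on morphisms one simply whiskers natural isomorphisms.

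For the Segal condition, I would begin by identifying $D_\A(\mathbf{1})$. The category $\I_{\mathbf{1}}$ has only the two objects $\emptyset \subsetneq \{1\}$, and an addition functor is forced to send $\emptyset \mapsto \initial$ with the unique morphism $\initial \rto F(\{1\})$; evaluation at $\{1\}$ therefore gives an equivalence $D_\A(\mathbf{1}) \simeq i\A$, the groupoid of isomorphisms in $\A$. The Segal map $D_\A(\mathbf{n}) \rto (i\A)^n$ is induced by the $n$ pointed projections $p_i: \mathbf{n} \rto \mathbf{1}$ (whose pullbacks send $\{1\} \mapsto \{i\}$), and so sends $F \mapsto (F(\{1\}), \ldots, F(\{n\}))$. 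For essential surjectivity, given $(A_1, \ldots, A_n)$ I would define $F(T) = \bigoplus_{i \in T} A_i$ using a fixed iterated sum (e.g.\ left-associated in the natural ordering of $\mathbf{n}$), with $F(T \subseteq T')$ the canonical monoidal inclusion; the addition-functor property then reduces to coherence for symmetric monoidal categories.

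Fully faithfulness is where the convenient axiom (CA2) does the real work. Given $F, G$ in $D_\A(\mathbf{n})$ and isomorphisms $\alpha_i: F(\{i\}) \rto G(\{i\})$, the coherence diagram for addition functors, together with the addition isomorphisms $\phi^F_T, \phi^G_T$, forces $\alpha_T$ to equal $\phi^G_T \circ (\bigoplus_{i \in T} \alpha_i) \circ (\phi^F_T)^{-1}$; axiom (CA2) guarantees uniqueness by pinning down any map out of a direct sum by its components on each summand. Naturality of $\alpha$ across inclusions $T \subseteq T'$ then follows by applying (CA2) one summand at a time. The main obstacle is the coherence bookkeeping in essential surjectivity, since the construction $T \mapsto \bigoplus_{i \in T} A_i$ depends on choices of parenthesization; this can be handled cleanly by strictifying $\A$ as in the proof of Theorem~\ref{thm:grpcomp}, after which $\oplus$ is strictly associative and commutative and the addition functor is manifestly well-defined.
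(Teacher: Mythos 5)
Your proof is correct and follows essentially the same route as the paper's: well-definedness via the observation that $f^{-1}$ preserves disjointness so the addition isomorphisms pull back, and the Segal condition via the fact that an addition functor is determined up to unique natural isomorphism by its values on singletons. The paper leaves the latter point as an immediate consequence of the definition, whereas you unpack it into essential surjectivity (building $F(T)=\bigoplus_{i\in T}A_i$) and full faithfulness via (CA2); this is just a more detailed account of the same argument.
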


\begin{proof}
  To check that $D_\A$ is well-defined we must check that for all $f: S \rto T$,
  if $F\in [\I_S, \A]_{add}$ then $F\circ f^*\in [\I_T,\A]_{add}$.

  We must show that for disjoint $A,B\subseteq T$ there exists an isomorphism
  $\psi_{A,B}$ making the following diagram commute:
  \begin{diagram}
    { F(f^{-1}(A)) \\
      F(f^{-1}(A)) \oplus F(f^{-1}(B)) & F(f^{-1}(A \cup B)) \\
      F(f^{-1}(B)) \\};
    \to{1-1}{2-2} \to{1-1}{2-1} \to{3-1}{2-1} \to{2-1}{2-2}^{\psi_{A,B}} \to{3-1}{2-2}
  \end{diagram}
  Since $A$ and $B$ are disjoint, $f^{-1}(A)$ and $f^{-1}(B)$ are also disjoint.
  Since $F$ is an addition functor, setting
  $\psi_{A,B} = \phi_{f^{-1}(A),f^{-1}(B)}$ works, as desired.

  It remains to check that $D_\A$ is special. Since $D_\A(*) = *$, it suffices
  to check that the functor
  \[\rho_n: [\I_{\mathbf{n}},\A]_{add} \rto [\I_{\mathbf{1}},\A]_{add}^n\]
  is an equivalence of categories.  A functor $\I_{\mathbf{1}} \rto \A$ is
  simply a choice of object in $\A$.  Thus the fact that $\rho_n$ is an
  equivalence of categories is simply the statement that an addition functor
  $F:\I_{\mathbf{n}} \rto \A$ is uniquely determined (up to natural isomorphism)
  by its values on $\{*,1\},\ldots,\{*,n\}$, which is exactly what the
  definition of an addition functor ensures. 
\end{proof}

The following is a consequence of the definitions:
\begin{proposition} \label{prop:DtoU}
  There is a morphism of multisimplicial categories
  $u_n: D_\A(S^n) \rto iU_\dotp^{(n)}(\A)$ which is a levelwise equivalence of
  categories.  Consequently, these assemble into a morphism of symmetric spectra
  $u: K(D_\A) \rto U(\A)$ which is a level equivalence.
\end{proposition}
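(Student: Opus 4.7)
The plan is to define $u_n$ explicitly at each multisimplicial level $(k_1,\ldots,k_n)$, check that it is an equivalence of categories there, verify compatibility with all face and degeneracy maps in each simplicial direction, and then conclude the level equivalence of symmetric spectra by passing to diagonals. Throughout, the non-basepoint subsets of $\mathbf{k_1}\wedge\cdots\wedge\mathbf{k_n}$ are exactly the subsets of $[k_1]\times\cdots\times[k_n]$, and the iterated $U$-objects consist of primary objects $A_{i_1\cdots i_n}$ together with chosen sums $S_R$ indexed by multi-intervals $R = [i_1,j_1]\times\cdots\times[i_n,j_n]$.

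First I would define $u_n$ on objects by sending an addition functor $F: \I_{\mathbf{k_1}\wedge\cdots\wedge\mathbf{k_n}} \rto \A$ to the iterated $U$-datum with $A_{i_1\cdots i_n} \defeq F(\{(i_1,\ldots,i_n)\})$ and $S_R \defeq F(R)$ for each multi-interval $R$; the isomorphism $\gamma_R$ identifying $F(R)$ with the appropriate iterated direct sum is built by repeated application of the addition-functor structure $\phi_{T,T'}$ (iteratively slicing $R$ along each axis). On morphisms, a natural isomorphism $\alpha: F \Rto F'$ restricts to a family of component isomorphisms; those on singletons give the tuple $(\phi_{i_1\cdots i_n})$, those on multi-intervals give the induced $\phi_R$, and the compatibility required by the iterated $U$-construction follows from naturality and (CA2).

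Next I would show $u_n$ is an equivalence at each multi-level by factoring both sides through $\A^{k_1\cdots k_n}$. On the $D_\A$ side, restricting an addition functor to singletons is essentially surjective (since one can build an addition functor out of singleton values by choosing sums using $\oplus$) and fully faithful (morphisms of addition functors are determined by their singleton components, by (CA2) applied inductively). On the $U$ side, the projection to the primary objects $A_{i_1\cdots i_n}$ is an equivalence by the same argument as in Theorem~\ref{thm:grpcomp}, applied iteratively: passing to a strict monoidal model of $\A$ gives a canonical lift. Since $u_n$ commutes with these projections, it is an equivalence. The multisimplicial compatibility is then essentially automatic: face maps in the $\ell$-th direction on both sides correspond to merging adjacent coordinates along the $\ell$-th axis (the inner $d_i$ on $\mathbf{k}$ on the $\Gamma$-space side matches the $d_i$ of $U_{k_\ell}$), and degeneracies similarly correspond to inserting $\initial$ in one axis.

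For the final assembly, $K(D_\A)_n$ is the nerve of the diagonal of the multisimplicial category $D_\A(S^n)$, and $U(\A)_n = |NiU_\dotp^{(n)}\A|$ is the realization of the diagonal of $iU_\dotp^{(n)}\A$; since $Nu_n$ is a levelwise equivalence of multisimplicial sets, its diagonal is a weak equivalence, giving a level equivalence of spaces. Compatibility with the spectrum structure maps reduces to verifying that the natural map $U_\dotp^{(k)}\A \wedge S^1 \rto U_\dotp^{(k+1)}\A$ is intertwined by $u_n$ with the $\Gamma$-space structure map $D_\A(S^n)\wedge S^1 \rto D_\A(S^{n+1})$, which is immediate from the construction since both implement the same inclusion of ``one more axis of rectangles.'' The only real obstacle is the notational bookkeeping in checking that $\gamma_R$ is built consistently across face maps and across the two nested $U$-constructions; everything else is formal from (CA2), the definition of an addition functor, and the standard strictification argument of Theorem~\ref{thm:grpcomp}.
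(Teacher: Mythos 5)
Your proposal is correct and takes essentially the same route as the paper: the paper defines $u_n$ as the functor that forgets the chosen sums for non-contiguous subsets of the grid (exactly your restriction of an addition functor to multi-intervals and singletons), deduces that it is a levelwise equivalence from (CA2), and concludes the spectrum statement because equivalences of categories induce homotopy equivalences on nerves. Your factoring of both sides through $\A^{k_1\cdots k_n}$ is just a repackaging of that same (CA2) argument (the paper has already recorded that $D_\A$ is special and that $U_k\A \simeq \A^k$), so no genuinely new input is involved.
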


\begin{proof}
  The simplices of $iU_\dotp^{(n)}(\A)$ contain choices of sums for all
  contiguous subsets of an ordered grid.  The simplices of $D_\A(S^n)$ contain
  choices of sums for all subsets of the same ordered grid.  Thus $u_n$ simply
  forgets the choices. (NB: it is important for the existence of this grid that
  we are using the definition $S^n = (S^1)^{\wedge n}$, with the standard
  definition of $S^1$.)  That it is an equivalence of categories follows from
  (CA2).  The rest of the proposition follows directly from the fact that
  equivalences of categories induce homotopy equivalences on classifying spaces.
\end{proof}

\subsection{Valuations on convenient addition categories}

\begin{definition}
  Let $\A$ be a convenient addition category, and $\P$ a poset.  A
  \emph{$\P$-valuation} on $\A$ is a functor $\A^{mon} \rto \P$, where
  $\A^{mon}$ is the subcategory of monomorphisms of $\A$.
\end{definition}

\begin{proposition}
  A $\P$-valuation on a convenient addition category $\A$ induces a
  $\P$-valuation on $D_\A$ and on $U(\A)$, which agree, in the sense that $u$
  induces an equivalence $F_pK(D_\A) \rto F_pU(\A)$ for all $p\in \P$.
\end{proposition}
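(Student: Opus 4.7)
The plan is to define both valuations intrinsically via a ``top'' element and then observe that $u$ intertwines them on the nose, so that the filtered equivalence follows formally from the fact that each $u_n$ is already an equivalence of categories.

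For the $\P$-valuation on $D_\A$, I set $\hat\nu(F) \defeq \nu(F(S^\circ))$ for each addition functor $F:\I_S\rto\A$. Since the morphisms of $[\I_S,\A]_{add}$ are natural isomorphisms and $\nu$ is valued in a poset, $\hat\nu$ is invariant along them. For functoriality on the category of elements, a morphism $(S,F)\rto(T,G)$ in $\El_{\FinSet_*} D_\A$ is given by a pointed map $\phi:T\rto S$ with $F=G\circ\phi^*$; then $F(S^\circ) = G(\phi^{-1}(S^\circ))$, and since $\phi$ is basepoint-preserving the inclusion $\phi^{-1}(S^\circ)\subseteq T^\circ$ holds in $\I_T$, yielding a monomorphism $F(S^\circ)\rcofib G(T^\circ)$ in $\A$ and hence $\hat\nu(F)\leq\hat\nu(G)$. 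Applying the argument of Lemma~\ref{lem:valtoK} levelwise to the nerve of $D_\A$ produces the promised $\P$-valuation on $K(D_\A)$. Analogously, on $U(\A)$ I define $\bar\nu(X)\defeq\nu(\max X)$ for $X\in U_{k_1}\cdots U_{k_n}(\A)$; morphisms in $iU_n\A$ are isomorphisms and induce isomorphisms on $\max$, while the face and degeneracy maps send $\max$ to a subobject of the new $\max$ via a canonical monomorphism forced by (CA2).

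To compare the two, I will use the description of $u_n$ in Proposition~\ref{prop:DtoU}: it forgets the sums chosen over non-contiguous subsets but retains the top contiguous sum, which by (CA2) is canonically identified with $F(S^\circ)$. This gives $\hat\nu = \bar\nu\circ u_n$ pointwise, so $u_n$ restricts to a functor $F_p u_n : F_p D_\A(S^n)\rto F_p iU_\dotp^{(n)}\A$ for every $p\in\P$. The restriction inherits full faithfulness from $u_n$, and is essentially surjective because any $Y$ with $\bar\nu(Y)\leq p$ admits some $F$ with $u_n(F)\cong Y$, and then $\hat\nu(F)=\bar\nu(u_n(F))=\bar\nu(Y)\leq p$ by invariance under isomorphism. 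Taking nerves and assembling over $n$ yields the claimed level equivalence $F_p K(D_\A)\rwe F_p U(\A)$.

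The main obstacle will be the spectrum-level bookkeeping: $U(\A)$ is constructed by iterating $U_\dotp$ rather than directly from a $\Gamma$-space, so checking that $\bar\nu$ is compatible with the structure maps $U(\A)_k\sma S^1\rto U(\A)_{k+1}$ requires tracking how ``top contiguous sum'' behaves under the inclusion used in the proof of Theorem~\ref{thm:grpcomp}. Once this is verified, one could alternatively simply transport $\hat\nu$ along $u$ and check that the resulting valuation on $U(\A)$ coincides with $\bar\nu$, bypassing the need to reprove the compatibility conditions.
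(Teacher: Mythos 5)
Your proposal is correct and follows essentially the same route as the paper: the paper likewise defines the valuation on $D_\A$ by $\nu_D(F) = \nu(F(S^\circ))$ and on $U(\A)$ by $\nu_U(X) = \nu(\max X)$, observes that $\nu_U \circ u_n = \nu_D$, and deduces the filtered equivalence from the fact that $u_n$ is a filtration-compatible equivalence of multisimplicial categories. Your extra remarks on essential surjectivity of the restricted functor and on the structure maps are just a more explicit spelling-out of what the paper leaves implicit.
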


\begin{proof}
  Let $\nu: \A \rto \P$ be the $\P$-valuation on $\A$.  We define a
  $\P$-valuation $\nu_D$ on $D_\A$ by setting $\nu_D(F) = \nu(F(S))$ for a
  functor $F \in[ \I_S,\A]_{add}$.  Given a morphism $f: S \rto T$ in
  $\FinSet_*$, it induces a morphism $F\circ f^* \rto F$ in $\El D_\A$; since
  $f^{-1}(T^\circ) \subseteq S^\circ$; thus $f$ produces a morphism
  $\nu(F(f^{-1}(T))) \rto \nu(F(S))$, as desired.  (Since $\P$ is a poset we do
  not need to check that composition is respected to check that $\nu$ is a
  functor.)  We define a $\P$-valuation $\nu_U$ on $U_\dotp^{(n)}(\A)$ by
  setting $\nu_U(X) = \nu(\max X)$ for a simplex $X\in U_\dotp^{(n)}(\A)$.  In
  particular, this implies that $\nu_U(u_n(x)) = \nu_D(x)$ for any
  $x\in D_\A(S^n)$, so $u_n$ (and therefore $u$) are compatible with the
  filtration.  Since $u_n$ is a filtered equivalence of multisimplicial
  categories, it induces an equivalence of filtered components as well.
\end{proof}

\subsection{A brief remark about Waldhausen categories}

We can extend the definition of a $\P$-valuation on a convenient addition
category to one on a Waldhausen category $\C$ by defining a $\P$-valuation to be
a functor $c\C \rto \P$, where $c\C$ is the subcategory of cofibrations of $\C$.
Via a similar construction to the one on $U_\dotp$, the valuation extends to a
valuation on $S_\dotp$ and thus to $K(\C)$.  This is exactly the way that Rognes
defines his filtration in \cite{rognes}.  We will discuss the implications of
this in in Section~\ref{sec:rogneswald} but will not discuss this in detail, as
Rognes has already discussed it at length in the paper, and all of the
constructions in this paper are directly inspired by those in Rognes's paper.

The case of Waldhausen categories is interesting, in addition, because for a
Waldhausen category $\C$, the category $c\C$ of cofibrations is often a
convenient addition category (with the monoidal structure given by the
coproduct).  In these cases, $U_\dotp(c\C)$ is exactly a model for the the
direct sum $K$-theory of $\C$.  Even in those cases where it is not a convenient
addition category, there exists a comparison functor to $K(\C)$ by taking a grid
of objects to its ``partial sums.''

\section{Subobject structures in convenient addition categories} \label{sec:subobjects}

\subsection{Computing filtered components in terms of subobject structures}

We work in a choice of convenient addition category $\A$.

\begin{definition}
  Let $A\in \A$.  Write $\A_{/A}^{mon}$ for the full subcategory of $\A_{/A}$ of
  those morphisms $f: B \rto A$ which are monomorphisms.  Note that
  $\A_{/A}^{mon}$ is a preorder.

  A \emph{subobject structure} on $A$, denoted $\Sub_A$ is a choice of skeleton
  of $\A_{/A}^{mon}$.  This choice is unique up to unique isomorphism, and we
  will by an abuse of language sometimes refer to ``the subobject structure'' on
  $A$.

  We will often write ``let $B\in \Sub_A$'', suppressing the morphism component;
  when we want to refer to the morphism $B \rcofib A$ we will call it $\iota_B$.

  Any object $B \in \Sub_A$ induces a subobject structure on $B$ by considering
  the full subcategory of objects above $B$ (and forgetting the monomorphism to
  $A$).  
\end{definition}

\begin{definition}  \label{def:autaction}
  The category $\Sub_A$ comes with an action by the group $\Aut(A)$.  The
  element $g\in \Aut(A)$ takes an object $B\in \Sub_A$ to the object
  $B'\in \Sub_A$ for which there exists an isomorphism $B \rto B'$ making the
  diagram
  \begin{diagram}
    { B & B' \\
      A & A \\};
    \to{1-1}{1-2}^\cong \to{2-1}{2-2}^g
    \cofib{1-1}{2-1}_{\iota_B} \cofib{1-2}{2-2}^{\iota_{B'}}
  \end{diagram}
  commute.  Since $B'$ is unique, this action is well-defined.
\end{definition}

\begin{definition}
  Let $B,B'\in \Sub_A$.  If there exists a monomorphism
  $\iota: B \oplus B' \rcofib A$ in $\A$ making the diagram
  \begin{diagram}
    { B & B \oplus B' & B' \\
      & A \\};
    \cofib{1-1}{1-2} \cofib{1-3}{1-2} \cofib{1-2}{2-2}^\iota
    \cofib{1-1}{2-2} \cofib{1-3}{2-2}
  \end{diagram}
  commute, then there exists a unique $C\in \Sub_A$ and isomorphism
  $B \oplus B' \rto^\cong C$ in $\A/A$.  We write $B\relplus B'\defeq C$, as
  objects of $\Sub_A$.  
\end{definition}

Note that $\relplus$ is a partially defined symmetric monoidal structure: it is
unital, $(B\relplus B') \relplus B''$ is defined if and only if
$B \relplus (B' \relplus B'')$ is defined, and the associator exists whenever it
is defined (and similarly for the commutator).  Moreover, $\relplus$ is in fact
\emph{strictly} associative, commutative, and unital whenever it is defined.
The subcategory of $\Sub_A\times \Sub_A$ on which $\relplus$ is defined is a
sieve.

\begin{definition}
  Let $\Sub_A$ be a subobject structure on $A$.  Define the $\Gamma$-set
  $S_{\leq A}$ to be the $\Gamma$-subset of $\ob D_\A$ of those functors which factor
  through the forgetful functor $U:\Sub_A \rto \A$.  In other words, an object
  of $S_{\leq A}(S)$ can be encoded as a tuple $\{A_s\}_{s\in S^\circ}$ of
  objects in $\Sub_A$, such that for any $T \subseteq S$, $\bigrelplus_{t\in T}
  A_t$ is defined in $\Sub_A$.  In this encoding, a function
  $f:S \rto T$ is taken to the function which maps a tuple
  $\{A_s\}_{s\in S}$ to the tuple $\{B_t\}_{t\in T}$ where
  \[B_t \defeq \bigrelplus_{s \in f^{-1}(t)} A_s.\]
  The $\Gamma$-set
  $S_{<A}$ is defined to be the $\Gamma$-subset of $S_A$ of those functors that
  factor through $\Sub_A$ without hitting $A$.  Also define
  \[S_A \defeq S_{\leq A}/S_A.\]
\end{definition}

The key point of all of these definitions is the following results, which
describe the filtered part of the $K$-theory of a convenient addition category
in terms of a subobject structure.

The following is direct from the definitions:
\begin{lemma} \label{lem:filt-to-gamma}
  \[F_p K(D_\A) = K(F_p D_\A),\]
  levelwise.
\end{lemma}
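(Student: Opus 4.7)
The plan is to verify this equality by unwinding both sides and appealing to the construction of $\nu_n$ from $\nu_D$ given in the proof of Lemma~\ref{lem:valtoK} (which applies Lemma~\ref{lem:precompose-int}). The underlying point is that Segal's $K$-theory functor $X \rgoesto X \circ S^\dotp$ commutes with the formation of $\P$-valued subfunctors of a $\Gamma$-set, so the filtration is taken level-by-level at the $\Gamma$-set level.

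First, I would recall how $\nu_n$ is constructed. Applying Lemma~\ref{lem:precompose-int} to $S^n: \Delta^\op \rto \FinSet_*$, the valuation $\nu_n: \El(D_\A \circ S^n) \rto \P_+$ is the composite of the induced functor $\El(D_\A \circ S^n) \rto \El_{\FinSet_*} D_\A$ with $\nu_D$. Concretely, for an $m$-simplex $x \in D_\A((S^n)_m)$, the formula is $\nu_n(m, x) = \nu_D((S^n)_m, x)$.

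Second, I would compute the $m$-simplices of the $n$-th space on both sides. On the left, by the definition of the spectrum-level filtration, $(F_p K(D_\A)_n)_m = \{x \in D_\A((S^n)_m) \mid \nu_n(m, x) \leq p\}$. On the right, $K(F_p D_\A)_n = (F_p D_\A)(S^n)$, whose $m$-simplices are $(F_p D_\A)((S^n)_m) = \{x \in D_\A((S^n)_m) \mid \nu_D((S^n)_m, x) \leq p\}$ by the definition of $F_p D_\A$ as a subfunctor of $D_\A$. The formula for $\nu_n$ from the first step identifies these two subsets. The face and degeneracy maps agree on both sides, since they are induced by $D_\A$ applied to the face and degeneracy maps of $S^n$, and the spectrum structure maps $\sigma_n$ agree for the same reason. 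There is no genuine obstacle here; the lemma is a bookkeeping identity asserting that $p \rgoesto F_p D_\A$ pulled back through $S^n$ agrees levelwise with the spectrum-level filtration of $K(D_\A)$, so I would present the argument as a brief paragraph of definition-chasing.
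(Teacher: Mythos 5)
Your argument is correct and is exactly the definition-chasing the paper has in mind: the paper offers no proof beyond the remark that the identity is ``direct from the definitions,'' and your unwinding of $\nu_n(m,x) = \nu_D((S^n)_m,x)$ via Lemma~\ref{lem:precompose-int} is precisely the content of that remark. The verification that face, degeneracy, and structure maps agree on both sides is the right bookkeeping to include.
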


Note that although $F_pD_\A$ is a $\Gamma$-set, it is not a special
$\Gamma$-set.

The $\Aut(A)$-action on $\Sub_A$ (see Definition~\ref{def:autaction}) extends to
$\Aut(A)$-actions on $S_{\leq A}$, $S_{<A}$ and $S_A$ by postcomposition.  In
many cases, this action models the filtered portions of the $\P$-valuation:

\begin{theorem} \label{thm:graded-part-SA}
  Let $\A$ be a convenient addition category equipped with a $\P$-valuation.
  Suppose in addition that if $\nu(A) = \nu(B)$ then $A \cong B$.  Then for all
  $p\in \P$,
  \[F_pK(\A)_n/F_{<p}K(\A)_n \simeq (K(S_A)_n)_{h\Aut(A)} \qqand F_pK(\A)/F_{<p}
    K(\A) \simeq K(S_A)_{h\Aut(A)},\] where $A$ is and object of $\A$ such that
  $\nu(A) = p$.
\end{theorem}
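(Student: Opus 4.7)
The plan is to work at the $\Gamma$-level and invoke Lemma~\ref{lem:filt-to-gamma} to reduce the spectrum-level statement to a statement about $D_\A$. Since $K(-)$ commutes with homotopy colimits levelwise on $\Gamma$-sets, the claim reduces to producing a natural equivalence of pointed $\Gamma$-objects
\[F_p D_\A / F_{<p} D_\A \simeq (S_A)_{h\Aut(A)}.\]
Unwinding definitions, for each $S\in\FinSet_*$ the pointed set $(F_p D_\A / F_{<p} D_\A)(S)$ consists of addition functors $F:\I_S \rto \A$ with $\nu(F(S)) = p$, together with a disjoint basepoint collecting everything with $\nu(F(S)) < p$. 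The hypothesis that $\nu(A)=\nu(B)$ implies $A\cong B$ then guarantees that $F(S)\cong A$ for every such $F$.

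I would next construct a natural comparison map
\[\pi_S: S_A(S) \rto (F_p D_\A / F_{<p} D_\A)(S)\]
sending a tuple $\{A_s\}_{s\in S^\circ}\in S_A(S)$ with $\bigrelplus_{s\in S^\circ} A_s = A$ in $\Sub_A$ to the addition functor $T \rgoesto \bigrelplus_{t\in T^\circ} A_t$, viewed as an object of $\A$ after forgetting its tautological embedding into $A$. This map is manifestly $\Aut(A)$-invariant, because the $\Aut(A)$-action on $S_A$ only twists the chosen embeddings. Surjectivity holds because any $F$ with $\nu(F(S))=p$ can be transported into $\Sub_A$ by choosing an isomorphism $\phi:F(S) \rto A$ and replacing each $F(\{s\})$ by its unique representative in the skeleton $\Sub_A$ under $\phi$; two such choices differ precisely by an element of $\Aut(A)$. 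Hence $\pi_S$ induces a natural bijection $S_A(S)/\Aut(A) \cong (F_p D_\A / F_{<p} D_\A)(S)$ of pointed $\Gamma$-sets.

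Finally, to promote this bijection of set quotients to an equivalence with the homotopy coinvariants, I would verify that $\Aut(A)$ acts freely on $S_A(S)^\circ$. If $g\in\Aut(A)$ fixes a tuple $\{A_s\}$ with $\bigrelplus A_s = A$, then $g$ restricts to the identity on each inclusion $A_s \rcofib A$, and iterated application of (CA2) to the decomposition $A=\bigoplus A_s$ forces $g=\id_A$. Freeness ensures that $S_A(S)/\Aut(A) \simeq (S_A(S))_{h\Aut(A)}$ as pointed spaces, and applying $K$ then yields both the levelwise and spectrum-level equivalences in the theorem. The main technical obstacle will be the bookkeeping in the middle step: confirming that the fibers of $\pi_S$ are \emph{exactly} the $\Aut(A)$-orbits (so that inequivalent addition functors never accidentally coincide after choosing $\phi$) and that the construction is sufficiently natural in $S$ to assemble into an honest map of $\Gamma$-sets. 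This is where (CA2) and the skeletal nature of $\Sub_A$ both play essential roles.
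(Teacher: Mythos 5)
Your reduction to the $\Gamma$-level and your essential-surjectivity argument (transporting an addition functor into $\Sub_A$ by a choice of isomorphism $F(S)\cong A$, with two choices differing by an element of $\Aut(A)$) are on the right track, but the final step of your proposal contains a genuine error: the $\Aut(A)$-action on $S_A(S)^\circ$ is \emph{not} free, and the strict quotient $S_A(S)/\Aut(A)$ is \emph{not} equivalent to the homotopy quotient. An element $g\in\Aut(A)$ fixes the tuple $\{A_s\}$ exactly when, for each $s$, there is \emph{some} isomorphism $\phi_s:A_s\rto A_s$ with $\iota_{A_s}\phi_s=g\,\iota_{A_s}$ (see Definition~\ref{def:autaction}); this says $g$ preserves each subobject, not that it restricts to the identity on it, so (CA2) only yields $g=\bigoplus_s\phi_s$, which need not be $\id_A$. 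Concretely, for $\A=\FinSet^{inj}$, $A=\mathbf{4}$ and the tuple $(\{1,2\},\{3,4\})$ the stabilizer is $\Sigma_2\times\Sigma_2$; for $\A=\Vect_k$, $A=k^2$ decomposed into the two coordinate lines, the stabilizer is the diagonal torus. Since stabilizers are nontrivial, $S_A(S)_{h\Aut(A)}$ contains a copy of $B(\mathrm{Stab}(x))$ over each orbit and genuinely differs from the orbit set, so even a correct set-level bijection $S_A(S)/\Aut(A)\cong(F_pD_\A/F_{<p}D_\A)(S)$ would prove the wrong statement.

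The missing ingredient is that $D_\A(S)$ is a \emph{groupoid} (addition functors and natural isomorphisms), not a set, and the nontrivial automorphism groups of addition functors are precisely what matches the nontrivial stabilizers above. The paper's proof models $S_A(S)_{h\Aut(A)}$ as the nerve of the translation groupoid $\D$ of the $\Aut(A)$-action on $S_A(S)$ and shows that the forgetful functor $\D\rto F_pD_\A(S)/F_{<p}D_\A(S)$ is an equivalence of categories: essential surjectivity is your transport argument, and full faithfulness comes from the observation that a natural isomorphism of addition functors is uniquely determined by its component at $S^\circ$ because all the structure maps $X(T)\rto X(S^\circ)$ are monic. Your proposal flattens the target to a set of functors (or isomorphism classes), which discards exactly the automorphism data that forces the answer to be homotopy coinvariants rather than strict coinvariants; to repair it you should replace the claimed bijection-plus-freeness step with this comparison of groupoids.
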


\begin{proof}
  By the condition on $\nu$, the right-hand side is well-defined, since the
  choices of $A$ are all isomorphic, and thus their subobject structures and
  groups of automorphisms are isomorphic.  Let $\Sub_A$ be a chosen subobject
  structure on $A$.

  The second part of the theorem is a direct consequence of the first, so we
  focus on proving the first statement.

  By Lemma~\ref{lem:filt-to-gamma}
  and a direct comparison of the simplicial structure,
  \[F_p K(\A)/F_{<p} K(\A) \simeq K(F_p D_\A)/K(F_{<p} D_\A) \simeq K(F_p
    D_\A/F_{<p}D_\A).\] Thus to prove the theorem it suffices to show that
  \[K(F_pD_\A/F_{<p} D_\A)_n \simeq (K(S_A)_{h \Aut(A)})_n \simeq (K(S_A)_{h
      \Aut(A)})_n.\]
  To check this it suffices to prove a much more direct statement:
  \[(NF_pD_\A(S))/(NF_{<p}D_\A(S)) \simeq S_A(S)_{h\Aut(A)}\] for any finite
  pointed set $S$.  The space on the right is modeled as $N\D$, where $\D$ is
  the category with
  \begin{description}
  \item[objects] elements of $S_A(S)$ (plus a basepoint), and
  \item[morphisms] $\Hom(x,y) = \{g\in \Aut(A) \,|\, g\cdot x = y.\}$ (with no
    morphisms into or out of the basepoint),
  \end{description}
  with composition via multiplication in $\Aut(A)$.  The functor
  $\D \rto F_pD_\A/F_{<p} D_\A$ is induced by the forgetful functor on objects.
  any element in $\Aut(A)$ induces a natural action on $\Sub_A$, and thus by
  postcomposition acts on the objects of $\D$.  Moreover, it is essentially
  surjective on the subcategory of $F_p D_\A(S)$ of those functors whose value
  on $S^\circ$ is isomorphic to $A$, and thus it is essentially surjective when
  mapped to the quotient $F_p D_\A(S)/F_{<p} D_\A(S)$.  (Here we are using the
  fact that if $\nu(A) = \nu(B)$ then $A \cong B$.) Thus to prove that it is an
  equivalence it suffices to prove that it is full and faithful.  Faithfulness
  is immediate: two different elements in $\Aut(A)$ map to different
  isomorphisms, since when restricted to the $S$-coordinate of the functor
  $\I_S \rto \A$ they will be different.  For fullness, it suffices to check
  that for any $T \subseteq S^\circ$, given $X,Y: \I_S \rto \Sub_A$, the
  function
  \[\operatorname{NatIso}(X,Y) \rto \Aut(A) \qquad \alpha \rgoesto \alpha_S\]
  is injective.  (This shows that any natural isomorphism is uniquely determined
  by its value at $S$, and since the morphisms in $\D$ are defined to be
  precisely those induced correctly on the $S$-coordinate, this shows that the
  functor is full.)  In any diagram
  \begin{diagram}
    { X(T) & X(S^\circ) \\ Y(T) & Y(S^\circ) \\};
    \arrowsquare{}{\alpha_T}{\alpha_S}{}
  \end{diagram}
  the horizontal morphisms are monic, which implies that for any choice of
  $\alpha_S$, if $\alpha_T$ exists to complete the diagram, it must be unique.
  This shows that the above function is injective, as desired.
\end{proof}

We have therefore shown that the filtration components of $K(\A)$ can be
investigated by understanding the $K$-theory of $S_A$ and the action of
$\Aut(A)$ on this $K$-theory.

\begin{example}
  Consider the case when $\A = \FinSet^{inj}$ and $\P = \Z_{\geq 0}$.  We can
  define a $\P$-valuation by sending a finite set to its cardinality.  Then
  $S_n$ is the $\Gamma$-set of those functors whose value on $S$ is
  $\mathbf{n}$, and $\Aut(\mathbf{n}) = \Sigma_n$.  Thus
  Theorem~\ref{thm:graded-part-SA} states that
  \[F_n/F_{n-1}K(\FinSet^{inj}) \simeq K(S_{n})_{h\Sigma_n}.\]
\end{example}

\begin{example}
  Consider the case when $\A=\Vect_k$ and $\P = \Z_{\geq 0}$.  We define a
  $\P$-valuation by sending a vector space to its dimension.  Then $S_{k^n}$ is
  the $\Gamma$-set of those fucntors whose value on $S$ is $k^n$, and
  $\Aut(k^n) = \GL_n(k)$.  Thus Theorem~\ref{thm:graded-part-SA} states that
  \[F_n/F_{n-1}K(\Vect_k) \simeq K(S_{k^n})_{hGL_n(k)}.\]
  (This is analogous to Rognes's \cite[Proposition 3.8]{rognes}.)
\end{example}

\begin{example}
  Consider the case when $k$ is a square-root closed ordered field, $\A = \Inn_k$, and $\P =
  \Z_{\geq 0}$.  We can similarly define a $\P$-valuation by taking a space to
  its dimension.
  Applying Theorem~\ref{thm:graded-part-SA} analogously to the
  above examples gives
  \[F_n/F_{n-1}K(\Inn_k) \simeq K(S_{k^n})_{h O_n(k)}.\]
\end{example}


\subsection{Analyzing the $K$-theory of subobject structures}

\begin{definition} \label{def:dec}
  Let $A\in \A$.  The \emph{decomposition category of $A$}, written $\Decomp_A$,
  is defined as follows.
  \begin{description}
  \item[objects] Objects are finite sets $\{A_1,\ldots,A_n\}$, of non-initial
    objects in $\Sub_A$ such that $\relplus_{i=1}^n A_i = A$.
  \item[morphisms] A morphism $\{B_1,\ldots,B_m\} \rto \{A_1,\ldots,A_n\}$ is a
    function $f:\{1,\ldots,n\} \rto \{1,\ldots,m\}$ such that for all
    $1 \leq j \leq m$, $\relplus_{i\in f^{-1}(j)} A_i = B_j$.
  \end{description}
  For an object $p = \{A_1,\ldots,A_n\}$ in $\Decomp_A$, we write $|p| \defeq
  n$.
  
  Let $\Decomp_A^\circ$ be the full subcategory of $\Decomp_A$ except for
  the initial object $\{A\}$---i.e. it is the full subcategory of those $p$ with
  $|p| \geq 2$.
\end{definition}

Note that $\Decomp_A$ is closed under finite products and has an initial object
$\{A\}$.

\begin{lemma}
  If there is a morphism $\{B_1,\ldots,B_m\} \rto \{A_1,\ldots,A_n\}$ in $\Decomp_A$
  then for each $1 \leq i \leq n$ there is a unique $1 \leq j \leq m$ such that
  there exists a monomorphism $A_i \rto B_j$.  Therefore $\Decomp_A$ is a
  poset.
\end{lemma}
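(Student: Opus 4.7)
My plan is to prove existence and uniqueness of $j$ separately, and then derive the poset claim as a formal consequence.

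For existence, I will take $j = f(i)$, where $f$ is the underlying function of the given morphism in $\Decomp_A$. By the definition of a morphism in $\Decomp_A$, $B_{f(i)} = \bigrelplus_{i' \in f^{-1}(f(i))} A_{i'}$ contains $A_i$ as a summand, so the canonical summand inclusion $A_i \hookrightarrow B_{f(i)}$ provides the required monomorphism in $\Sub_A$ (compatible with the inclusions $\iota_{A_i}$ and $\iota_{B_{f(i)}}$ into $A$).

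For uniqueness, I will suppose some $j' \neq f(i)$ also admits a monomorphism $A_i \hookrightarrow B_{j'}$ in $\Sub_A$ and derive a contradiction. Setting $C = \bigrelplus_{k \neq j'} B_k$ yields $A = B_{j'} \relplus C$ with $B_{f(i)}$ a summand of $C$, so the two compositions
\[A_i \hookrightarrow B_{j'} \hookrightarrow A \qquad \text{and} \qquad A_i \hookrightarrow B_{f(i)} \hookrightarrow C \hookrightarrow A\]
both equal $\iota_{A_i}$. By the universal property of the pullback $B_{j'} \times_A C$, together with axiom (CA3) identifying this pullback with $\initial$, I will obtain a morphism $A_i \to \initial$. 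By (CA1) this morphism is monic, and the unique map $\initial \to A_i$ is automatically a right inverse to it, so a standard monic-cancellation argument forces $A_i \cong \initial$, contradicting non-initiality of $A_i$ as an element of the decomposition.

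For the poset conclusion, the uniqueness of $j$ shows that the underlying function of any morphism $\{B_1,\ldots,B_m\} \to \{A_1,\ldots,A_n\}$ is determined by its source and target (since $f(i)$ must be the unique $j$ with $A_i \hookrightarrow B_j$), so $\Decomp_A$ is a preorder. To verify skeletality, I will suppose morphisms exist in both directions between $P = \{B_1,\ldots,B_m\}$ and $Q = \{A_1,\ldots,A_n\}$, with underlying functions $f$ and $g$. Since each fiber of either function must be nonempty (otherwise some $B_j$ or $A_i$ would be $\initial$), a cardinality comparison forces $m = n$ and both $f, g$ to be bijections. Substituting one decomposition into the other then gives $A_i = A_{(g \circ f)^{-1}(i)}$ as elements of $\Sub_A$, and skeletality of $\Sub_A$ implies $g \circ f = \operatorname{id}$, so $f$ and $g$ are mutually inverse bijections identifying $P$ and $Q$ as sets of subobjects.

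The main obstacle will be the uniqueness step, which requires combining (CA3) (to split the relevant pullback as $\initial$) with (CA1) (to deduce that no non-initial object admits a morphism to $\initial$).
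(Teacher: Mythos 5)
Your proposal is correct and follows essentially the same route as the paper: the heart of both arguments is the observation that, by (CA3) and (CA1), a non-initial object of $\Sub_A$ cannot admit monomorphisms into two summands of a single decomposition, which forces uniqueness of $j$ and hence of the underlying function of any morphism. The only differences are cosmetic (you group the complementary summands into a single $C$ rather than applying the observation directly to $B_{f(i)}$ and $B_{j'}$) plus the fact that you explicitly verify skeletality, a step the paper leaves implicit.
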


\begin{proof}
  Consider the following special case.  Suppose that $Y \relplus Z$ is defined
  and that $X$ is such that $X \rto Y$ and $X \rto Z$ are morphisms in
  $\Sub_A$.  We claim that this implies that $X \cong \initial$.
  In particular, this implies that the diagram
  \begin{diagram}
    { X & Y \\ Z & Y\oplus Z \\
      & &  A \\};
    \arrowsquare{}{}{}{}
    \to{1-2}{3-3} \to{2-1}{3-3} \to{2-2}{3-3}
  \end{diagram}
  commutes.  Since $Y \times_{Y\oplus Z} Z =\initial$ by (CA3), this implies
  that there exists a morphism $X \rto \initial$; by (CA1) this is an
  isomorphism.

  A morphism $(B_1,\ldots,B_m) \rto (A_1,\ldots,A_n)$ is determined by a
  function $f: \{1,\ldots,n\} \rto \{1,\ldots,m\}$.  Given such a morphism,
  there exists a morphism $A_i \rto B_{f(i)}$ for all $1 \leq i \leq n$.  Thus
  to prove the lemma it suffices to check that the function is unique.  Thus
  suppose that there are two functions $f,f'$ which satisfy the condition in the
  definition, and suppose that $f(i) \neq f'(i)$ for some $i\in \{1,\ldots,n\}$.
  This means that there exist morphisms $A_i \rto B_{f(i)}$ and
  $A_i \rto B_{f'(i)}$ in $\Sub_A$.  By the above observation, this implies that
  $A_i \cong \initial$, contradicting the definition of $\Decomp_A$.  Thus $f$
  is unique, as desired.
\end{proof}

There is a $\Decomp_A$-valuation on $S_A$ given by sending any element
$\{A_1,\ldots,A_i\}$ to the set
\[\{A_j \,|\, j\in \mathbf{i},\ A_j \neq I\}.\] We call this the \emph{canonical
  decomposition valuation} on $S_A$ and denote by $\delta$.

The filtered stages of the $\Gamma$-set $S_A$ turns out out to be surprisingly
simple to analyze.

\begin{definition}
  For any pointed simplicial set $L$ and pointed finite set $T$ write
  $L^{\wedge T}$ for the pointed simplicial set whose $n$-simplices are given by
  the set of pointed functions $T \rto L_n$ which are either constant at the
  basepoint or injective.  The face and degeneracy maps are constructed by
  postcomposition.
\end{definition}

\begin{proposition} \label{prop:level-analysis-and-decompositions}
  Let $L: \Delta^\op \rto \FinSet_*$ be a simplicial pointed finite set, and let
  $\A$ be a convenient addition category and $A\in \A$.  Then for all $p\in
  \Decomp_A$, $F_pS_A(L) \cong L^{\wedge p}$.  
  Moreover, there is a $(\Decomp_A)_{\leq p}$-valuation on $L^{\wedge p}$ making
  this isomorphism a filtered isomorphism.
\end{proposition}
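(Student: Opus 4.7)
The plan is to produce an explicit levelwise bijection $\Phi \colon L^{\wedge p} \rto F_p S_A(L)$ and then equip $L^{\wedge p}$ with a $(\Decomp_A)_{\leq p}$-valuation transporting across $\Phi$ to the canonical decomposition valuation $\delta$ on the right-hand side. Write $p = \{B_1, \ldots, B_m\}$. At level $n$, an $n$-simplex of $L^{\wedge p}$, encoded as a pointed function $f \colon p_+ \rto L_n$ (where $p_+$ is $p$ with a disjoint basepoint added), is sent to the tuple $\Phi(f) = \{A_s\}_{s \in L_n^\circ}$ defined by
\[A_s \defeq \bigrelplus_{i \colon f(B_i) = s} B_i,\]
with the empty relplus interpreted as $\initial$. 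Because $p$ is a decomposition of $A$, every sub-relplus here is defined in $\Sub_A$, so the tuple lies in $S_{\leq A}(L_n)$; when no $B_i$ maps to the basepoint, $\bigrelplus_s A_s = A$ and $\Phi(f)$ is a non-basepoint simplex of $S_A(L_n)$ whose decomposition $\delta(\Phi(f))$ is the grouping of $p$ by the nontrivial fibers of $f$, a coarsening of $p$, and hence $\Phi(f) \in F_p S_A(L_n)$.

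The remaining verifications split as follows. First, $\Phi$ commutes with faces and degeneracies: both $L^{\wedge p}$ and $F_p S_A(L)$ have their simplicial structure induced by postcomposing with the face and degeneracy maps of $L$ (on the right via the $\Gamma$-action of $S_A$), and both computations reduce to taking $\bigrelplus$ over fibers of the composite function, so the two actions agree tautologically. Second, the inverse of $\Phi$ sends a tuple $\{A_s\} \in F_p S_A(L_n)$ to the function $p_+ \rto L_n$ placing each $B_i$ at the slot of the non-initial entry into whose relplus decomposition $B_i$ appears; uniqueness of this reconstruction is exactly the statement that $\Decomp_A$ is a poset, the lemma proved immediately above, which forces the grouping witnessing $\delta(\{A_s\}) \leq p$ to be unique. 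Third, setting $\mu(f) \defeq \delta(\Phi(f)) \in (\Decomp_A)_{\leq p}$ defines the advertised valuation on $L^{\wedge p}$, and $\Phi$ is then filtered by construction.

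The main obstacle I anticipate is handling the simplicial structure of $L^{\wedge p}$ coherently: its $n$-simplices are recorded as pointed functions $p_+ \rto L_n$ that are either constant-basepoint or injective, while the construction of $\Phi$ naturally admits any pointed function as input. Reconciling these means recognizing that a non-injective $f$ corresponds precisely to a tuple $\Phi(f)$ with $\delta(\Phi(f)) < p$ strictly in $\Decomp_A$, i.e., to an element of a strictly lower filtration piece, so the simplices whose composites under face maps become non-injective should not be collapsed to the basepoint outright but instead filtered down. The $(\Decomp_A)_{\leq p}$-valuation on $L^{\wedge p}$ is exactly what records this information, and the verification that face maps of $L$ produce the appropriate coarsenings on the decomposition side is where the real content of the proof lies; once this bookkeeping is in place, the isomorphism and its filtered refinement follow formally.
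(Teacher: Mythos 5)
Your proposal is correct and follows essentially the same route as the paper: both establish the bijection (simplices of $F_pS_A(L)$ with valuation $q\leq p$) $\leftrightarrow$ (injections $q\hookrightarrow L_n^\circ$) $\leftrightarrow$ (arbitrary functions $p\to L_n^\circ$, recovered by grouping $p$ along fibers), observe that faces and degeneracies act by postcomposition with the structure maps of $L$ on both sides, and define the valuation on $L^{\wedge p}$ as the fiber-grouping of $p$. The injectivity tension you flag is genuine but is already present in the paper's own definition of $L^{\wedge T}$; its proof likewise counts \emph{all} functions $(L_n^\circ)^{|p|}$ as the non-basepoint $n$-simplices, so your resolution --- that a non-injective function corresponds to a simplex of strictly smaller valuation rather than to the basepoint --- is exactly what is intended.
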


\begin{proof}
  A non-basepointe $n$-simplex in $S_A(L)$ with valuation $q$ is given by a
  function $f:L_n^\circ \rto \ob\Sub_A$ such that
  $f(L_n^\circ) \cap (\ob\Sub_A \backslash \{\initial\}) = q$.  Moreover, by the
  definition of $S_A$, $f$ is injective away from $\{\initial\}$, so such a
  simplex is uniquely defined by a function $f': q \rto L_n^\circ$.  Given a
  morphism $q \rto p$ in $\Decomp_A$ $f'$ uniquely defines (and is defined by) a
  function $\widehat f: p \rto L_n^\circ$.  Thus the $n$-simplices of
  $F_pS_A(L)$ is exactly the set $(L_n^\circ)^{|p|}$.  The face and degeneracy
  maps are given by postcomposing this map with the appropriate face and
  degeneracy maps, so we see that $S_A(L) \cong L^{\wedge |p|}$, as desired.

  The valuation on $L^{\wedge p}$ takes a function $f: p \rto L_n$ to the minimum
  $q \rto q$ such that there exists a function $f': q \rto L_n$ making the
  diagram
  \begin{diagram}
    { p & L_n \\
      q \\};
    \to{1-1}{1-2}^f \to{2-1}{1-1} \to{2-1}{1-2}_{f'}
  \end{diagram}
  commute.
\end{proof}

\begin{corollary} \label{cor:KSA}
  $S_A$ satisfies the conditions of Corollary~\ref{cor:gamma-space-indec}, and
  thus
  \begin{align*}
    K(F_pS_A) &\simeq \Sigma^\infty \Sigma' N (\Decomp_A^\circ)_{\leq p} \\
    K(F_{<p}S_A) &\simeq \Sigma^\infty \Sigma' N(\Decomp_A^\circ)_{<p} \\
    K(S_A) &\simeq \Sigma^\infty \Sigma' N (\Decomp_A^\circ)
  \end{align*}
\end{corollary}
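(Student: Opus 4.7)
The plan is to verify the hypotheses of Corollary~\ref{cor:gamma-space-indec} for the $\Gamma$-set $S_A$ equipped with the canonical decomposition valuation $\delta$, taking $\P = \Decomp_A$ and $\P^\circ = \Decomp_A^\circ$, and then to invoke that corollary (along with its restrictions to $F_pS_A$ and $F_{<p}S_A$) to obtain all three equivalences. The heavy lifting has already been done in Proposition~\ref{prop:level-analysis-and-decompositions}, which identifies the filtered pieces with concrete smash-product models; what remains is a cosieve check and a connectivity estimate.

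First I would verify that $\Decomp_A^\circ$ is a cosieve in $\Decomp_A$: any morphism $p \to q$ exhibits $q$ as a refinement of $p$, so $|q| \geq |p|$, and hence $|p| \geq 2$ forces $|q| \geq 2$. For the trivial decomposition $\{A\}$ (the unique object of $\Decomp_A$ outside $\Decomp_A^\circ$), Proposition~\ref{prop:level-analysis-and-decompositions} identifies $F_{\{A\}}S_A(L)$ with $L$ itself, so in particular $F_{\{A\}}S_A(S^n) \simeq S^n$. For $p \in \Decomp_A^\circ$ the same proposition describes the $n$-simplices of $F_pS_A(L)$ as $(L_n^\circ)^{|p|}$ together with a basepoint, which is the ordinary $|p|$-fold smash product $L^{\wedge |p|}$. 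The classical connectivity estimate for smash products then gives that $L^{\wedge |p|}$ is $(|p|(m+1) - 1)$-connected whenever $L$ is $m$-connected; since $|p| \geq 2$ and $m \geq 1$, this bound is at least $2(m+1) - 1 = 2m + 1$, matching the hypothesis of Corollary~\ref{cor:gamma-space-indec}.

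With the hypotheses verified, Corollary~\ref{cor:gamma-space-indec} directly yields $K(S_A) \simeq \Sigma^\infty \Sigma' N\Decomp_A^\circ$. The two filtered statements follow from the same argument applied to the $\Gamma$-sets $F_pS_A$ and $F_{<p}S_A$: their restricted valuations take values in the sub-posets $(\Decomp_A)_{\leq p}$ and $(\Decomp_A)_{<p}$, the relevant cosieves $(\Decomp_A^\circ)_{\leq p}$ and $(\Decomp_A^\circ)_{<p}$ remain cosieves by the same combinatorial argument, and the connectivity analysis transfers verbatim since $F_qF_pS_A = F_qS_A$ for $q \leq p$. The only nontrivial ingredient is the connectivity bound for the iterated smash product; everything else is a direct assembly of previously established results, and once the identification with $L^{\wedge |p|}$ is in hand the verification is essentially formal.
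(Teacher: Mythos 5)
Your proposal is correct and is exactly the argument the paper intends: the corollary is stated as a direct application of Corollary~\ref{cor:gamma-space-indec} via Proposition~\ref{prop:level-analysis-and-decompositions}, and your verification of the cosieve condition, the identification $F_{\{A\}}S_A(S^n)\simeq S^n$, and the $(|p|(m+1)-1)$-connectivity of the smash-product model for $|p|\geq 2$ supplies precisely the omitted details, including the restriction to $(\Decomp_A)_{\leq p}$ and $(\Decomp_A)_{<p}$ for the filtered statements. No gaps.
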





\section{Applications} \label{sec:applications}

This section contains some applications and consequences of the theory developed in the previous sections.  It also relies heavily on a result of Alexander Kupers in Appendix~\ref{app:kupers} which shows that, in many cases, the spaces $N \Decomp_A^\circ$ are homotopy equivalent to a wedge of spheres.

\subsection{Finite Sets}
  Consider the case $\A = \FinSet$, with $\oplus = \amalg$.  Let $A =
  \mathbf{n}$.  Then $\Decomp_A$ is the category of unordered partitions of
  $\A$, ordered by refinement.  $\Decomp^\circ_A$ is then the full
  subcategory of nontrivial partitions.  When $n > 1$ this has a terminal
  object (given by the discrete decomposition) and is therefore contractible.
  When $n=1$, $\Decomp^\circ_A$ is empty, and thus $K(S_A) \simeq \S$.

  Let $d\in \Decomp_A$ be the discrete partition (i.e. the terminal object),
  with $|d|=n$. Then by the valuation constructed in the proof of
  Proposition~\ref{prop:level-analysis-and-decompositions}, $F_{<d}S_A(L)_m$ is
  the set of functions $p \rto L_m$ such that at least two of the coordinates
  are the same---in other words, it is the fat diagonal in $L^{\wedge p}$.  In
  the case when $L = S^k$, this is the fat diagonal in $S^{kn}$.  By Alexander
  duality,
  \[\tilde H_q(F_{<d}S_A(S^k)) \cong \tilde
    H^{nk-q-1}(\mathrm{Conf}_n(\R^k)).\] By \cite[Proposition
  3.1.4]{knudsen18} (originally in \cite{cohen73}), 
  $H^*(\mathrm{Conf}_n(\R^k))$ is generated as an algebra in degree $k-1$, and
  is concentrated in degrees $0,k-1,\ldots,(k-1)(n-1)$, with all groups free.
  Thus we see that $\tilde H_q(F_{<d} S^{nk})$ is free and concentrated in
  degrees $nk-(n-1)(k-1)-1, nk-(n-1)(k-2)-1, \ldots, nk-(n-1)-1$.  The lowest
  degree is therefore $n+k-2$.  Stabilizing with respect to $k$, the homology of
  $F_{<\delta_k} K(S_A)$ will be concentrated in degree $n-2$.

  For any $p\in \Decomp_A$, $(\Decomp_A)_{\leq p} \cong \Decomp_{p}$, and thus
  $(\Decomp_A)_{<p} \cong \Decomp_p \backslash \{\mathrm{discrete}\}$.  Thus the
  above case is sufficient to understand any of the filtered portions of
  $\Decomp_A$. 

\begin{corollary}[Barratt--Priddy--Quillen Theorem, \cite{barratt_priddy}]
  \[K(\FinSet_*) \simeq \S\]
\end{corollary}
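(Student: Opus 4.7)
The plan is to apply the filtration machinery of Theorem~\ref{thm:graded-part-SA} together with the suspension spectrum identification of Corollary~\ref{cor:KSA} to the convenient addition category $\FinSet^{inj}$ equipped with the cardinality valuation $\nu: \FinSet^{inj} \rto \Z_{\geq 0}$. Since two finite sets of the same cardinality are isomorphic, $\nu$ reflects isomorphisms, and combining the two results gives
\[F_n K(\FinSet^{inj})/F_{n-1} K(\FinSet^{inj}) \simeq \bigl(\Sigma^\infty \Sigma' N\Decomp_{\mathbf{n}}^\circ\bigr)_{h\Sigma_n}.\]

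Next, I would analyze $N\Decomp_{\mathbf{n}}^\circ$ case by case, reusing the computation at the start of Section~\ref{sec:applications}. When $n=1$, the poset $\Decomp_{\mathbf{1}}^\circ$ is empty, so $\Sigma' N\Decomp_{\mathbf{1}}^\circ = S^0$ and the first graded quotient is $\S_{h\Sigma_1} \simeq \S$. When $n \geq 2$, the discrete partition $\{\{1\},\ldots,\{n\}\}$ is a terminal object of $\Decomp_{\mathbf{n}}^\circ$, and since this partition is preserved by every element of $\Sigma_n$, the canonical natural transformation to the constant functor at it is $\Sigma_n$-equivariant. Thus $N\Decomp_{\mathbf{n}}^\circ$ is $\Sigma_n$-equivariantly contractible, and the corresponding graded quotient is contractible.

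Finally, I would conclude by observing that $F_0 K(\FinSet^{inj})$ is contractible (its underlying $\Gamma$-set consists only of functors that are constant at $\initial$), and combining this with the two cases above shows that $F_1 K(\FinSet^{inj}) \simeq \S$ while the inclusion $F_n K(\FinSet^{inj}) \rto F_{n+1} K(\FinSet^{inj})$ is an equivalence for all $n \geq 1$. Since every object of $\FinSet^{inj}$ has finite cardinality and hence lies in some $F_n$, the filtration is exhaustive, so $K(\FinSet^{inj}) \simeq \colim_n F_n K(\FinSet^{inj}) \simeq \S$.

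The main obstacle is really just the equivariance bookkeeping for $n \geq 2$: one must check that the contraction of $N\Decomp_{\mathbf{n}}^\circ$ supplied by the terminal object is $\Sigma_n$-equivariant, which reduces to the fact that the discrete partition is $\Sigma_n$-fixed, after which the homotopy orbits of a $\Sigma_n$-equivariantly contractible suspension spectrum are automatically contractible.
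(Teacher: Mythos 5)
Your proposal is correct and follows essentially the same route as the paper: the cardinality valuation on finite sets, Theorem~\ref{thm:graded-part-SA} plus Corollary~\ref{cor:KSA} to identify the graded quotients as $(\Sigma^\infty \Sigma' N\Decomp_{\mathbf{n}}^\circ)_{h\Sigma_n}$, contractibility of $\Decomp_{\mathbf{n}}^\circ$ for $n\geq 2$ via the terminal discrete partition, and the conclusion $K \simeq F_1 \simeq \S$. The only (harmless) extra is your equivariance worry: since homotopy orbits preserve maps that are underlying weak equivalences, non-equivariant contractibility of $N\Decomp_{\mathbf{n}}^\circ$ already forces the homotopy orbits to vanish, so the $\Sigma_n$-fixedness of the discrete partition is not strictly needed.
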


\begin{proof}
  This is the same proof as found in \cite[Section 4]{rognes}, but stated in
  terms of the language in this paper.  We have a $\Z_{\geq 0}$-valuation on
  $\FinSet_*$ given by the size of the set.  By Corollary~\ref{cor:KSA} we have
  \[K(F_d S_{\mathbf{n}}) \simeq \Sigma^\infty \Sigma'' N (
    \Decomp^\circ_{\mathbf{n}}) \simeq
    \begin{cases}
      \S \caseif n = 1\\
      * \caseotherwise.
    \end{cases}\]
  By Theorem~\ref{thm:graded-part-SA}, for $n > 1$ we have
  \[F_n K(\A) / F_{n-1} K(\A) \simeq K(S_{\mathbf{n}})_{h\Sigma_n} \simeq *;\]
  thus \[K(\A) \simeq F_1(\A) \simeq \Sigma^\infty S^0 = \S,\]
  by Corollary~\ref{cor:KSA}.
\end{proof}


\subsection{Inner product spaces}

Let $k$ be a ordered field.  By Theorem~\ref{thm:graded-part-SA}, the dimension
valuation on $K(\Inn_k)$ has
\[F_n/F_{n-1} K(\Inn_k) \simeq K(S_{k^n})_{h O_n(k)}.\] By
Corollary~\ref{cor:Gsusp} there is an $O_n(k)$-equivariant equivalence
\[K(S_{k^n}) \simeq \Sigma^\infty \Sigma' N(\Decomp^\circ_{k^n}).\]

\begin{lemma} \label{lem:innsphere}
  $N\Decomp^\circ_{k^n}$ is a wedge of $n-2$-spheres.  
\end{lemma}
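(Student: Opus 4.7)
The plan is to show that $N\Decomp^\circ_{k^n}$ is at most $(n-2)$-dimensional and is $(n-3)$-connected; the standard fact that a $(d-1)$-connected CW complex of dimension $\leq d$ splits as a wedge of $d$-spheres will then yield the lemma.

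First I would bound the dimension. Every object of $\Decomp^\circ_{k^n}$ is a set $\{V_1,\ldots,V_m\}$ of $m \geq 2$ nonzero, pairwise orthogonal summands of $k^n$, so $m \leq n$. Because $\Decomp^\circ_{k^n}$ is a genuine poset and strict refinement strictly increases the cardinality, a chain in $\Decomp^\circ_{k^n}$ corresponds to a strictly increasing sequence drawn from $\{2,3,\ldots,n\}$ and therefore has at most $n-1$ vertices. Hence $\dim N\Decomp^\circ_{k^n} \leq n-2$, with equality witnessed by any chain that refines one summand at a time from an orthogonal splitting into two summands down to an orthogonal splitting into lines.

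The heart of the argument is the $(n-3)$-connectivity of $N\Decomp^\circ_{k^n}$. My approach is to invoke the general result of Appendix~\ref{app:kupers}, which, as announced in the introduction to this section, gives sufficient conditions on a convenient addition category $\A$ (equipped with a dimension-like valuation) under which $N\Decomp^\circ_A$ is homotopy equivalent to a wedge of top-dimensional spheres. To apply this to $\A = \Inn_k$ with $A = k^n$, one must check that $\Inn_k$ satisfies the hypotheses of that criterion. The critical input at this step is the assumption that $k$ is a square-root closed ordered field: this is exactly what ensures that every finite-dimensional inner product space over $k$ admits an orthonormal basis, that every inner product subspace has an orthogonal complement, and that finite families of orthogonal decompositions can be refined to a common decomposition into orthogonal lines.

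The main obstacle is this verification of Kupers' hypotheses. Unlike the purely linear case, where one can freely combine subspaces into common refinements by taking sums and intersections, in the orthogonal setting each refinement step must respect the bilinear form, so the argument has to exploit the existence of simultaneous orthonormal bases and the fact that orthogonal projections exist in the square-root closed setting. Once the Kupers criterion is seen to apply, combining the dimension bound from the first step with the resulting $(n-3)$-connectivity gives the asserted wedge-of-$(n-2)$-spheres decomposition of $N\Decomp^\circ_{k^n}$.
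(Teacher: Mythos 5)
Your overall strategy (bound the dimension by $n-2$, prove $(n-3)$-connectivity, conclude a wedge of spheres) is sound and is the strategy underlying the paper's argument too, and your dimension count is fine. But there is a genuine gap in how you use the appendix. Theorem~\ref{thm:deorder} is not a criterion that, once its hypotheses are verified, \emph{outputs} that $N\Decomp^\circ_A$ is a wedge of top-dimensional spheres. It is a comparison statement: under the stated hypotheses, $\Decomp^\circ_A$ is a wedge of $(\rk(A)-2)$-spheres \emph{if and only if} the ordered decomposition poset $(\Decomp^{ord}_A)^\circ$ is. So after you have checked that $\Inn_k$ has ranks and that longest chains in $\Decomp_{k^n}$ have length $n-1$, you have only traded the unordered problem for the ordered one; you still owe an independent proof that $(\Decomp^{ord}_{k^n})^\circ$ is a wedge of $(n-2)$-spheres, and your proposal never supplies it.

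The paper closes exactly this gap by identifying $N(\Decomp^{ord}_{k^n})^\circ$ with the barycentric subdivision of the Tits building of $k^n$: a flag $U_0 \subseteq \cdots \subseteq U_m$ corresponds to the ordered orthogonal decomposition $(U_0, U_0^\perp \cap U_1, \ldots, U_{m-1}^\perp \cap U_m)$, and the Solomon--Tits theorem (\cite[Theorem 2]{quillen73-fingen}) says the Tits building is a wedge of $(n-2)$-spheres. This is where the ordered-field/inner-product structure is really used (to pass back and forth between flags and orthogonal splittings via orthogonal complements), rather than in verifying abstract hypotheses of the appendix. To repair your argument, add this identification and citation as the connectivity input feeding into Theorem~\ref{thm:deorder}.
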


\begin{proof}
This is an application of Theorem~\ref{thm:deorder}.  By Theorem~\ref{thm:deorder}, $\Decomp^\circ_{k^n}$ is $n-2$-connected if and only if $(\Decomp_{k^n}^{ord})^\circ$ is.  But $N(\Decomp^{ord}_{k^n})^\circ$ is isomorphic to the barycentric subdivision of the Tits building of $k^n$ (where a flag is associated to a decomposition by taking $U_0 \subseteq \cdots \subseteq U_m$ to $(U_0,U_0^\perp \cap U_1,\ldots,U_{m-1}^\perp\cap U_m)$.  Since the Tits building is homotopy equivalent to a wedge of $n-2$-spheres \cite[Theorem 2]{quillen73-fingen}, so is $N(\Decomp_{k^n}^{ord})^\circ$. Thus $N(\Decomp^\circ_{k^n})$ is a wedge of $n-2$-spheres, as desired.
\end{proof}

\begin{definition}
    Let
    \[DO_n \defeq \tilde H_{n-2}(N \Decomp^\circ_{k^n};\Q). \]
\end{definition}

\begin{corollary}
\[\pi_i\left(K(S_{k^n})_{hO(n)}\right) \otimes \Q \cong H_i\left(K(S_{k^n})_{hO(n)}; \Q\right) \cong H_{i-n+1}\left(O(n), DO_n\right).\]
\end{corollary}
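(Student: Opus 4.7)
The plan is that both isomorphisms reduce to standard facts once one identifies the rational homology of $K(S_{k^n})$ as an $O_n(k)$-module. The first isomorphism $\pi_i(Y)\otimes\Q \cong H_i(Y;\Q)$ holds for any spectrum $Y$ (for instance by the rational Atiyah--Hirzebruch spectral sequence, or because rationally the sphere spectrum agrees with $H\Q$), so I would apply this to $Y = K(S_{k^n})_{hO(n)}$ and concentrate on the second isomorphism.

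For the second isomorphism, the first step is to determine $H_*(K(S_{k^n});\Q)$ as an $O_n(k)$-module. By the equivariant version of Corollary~\ref{cor:KSA}, namely Corollary~\ref{cor:Gsusp}, there is an $O_n(k)$-equivariant equivalence $K(S_{k^n}) \simeq \Sigma^\infty \Sigma' N\Decomp^\circ_{k^n}$, and by Lemma~\ref{lem:innsphere} the space $N\Decomp^\circ_{k^n}$ is a wedge of $(n-2)$-spheres. Hence $\Sigma' N\Decomp^\circ_{k^n}$ is a wedge of $(n-1)$-spheres, and $H_q(K(S_{k^n});\Q)$ is concentrated in the single degree $q = n-1$, where it equals $DO_n$ as a $\Q[O_n(k)]$-module essentially by definition.

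The second step is to feed this into the homotopy-orbit (group hyperhomology) spectral sequence
\[E^2_{p,q} = H_p\bigl(O_n(k); H_q(K(S_{k^n});\Q)\bigr) \Longrightarrow H_{p+q}\bigl(K(S_{k^n})_{hO_n(k)};\Q\bigr).\]
Because the $E_2$-page is concentrated in the single row $q = n-1$, the spectral sequence collapses with no extension issues, and reading off the $(p,q)=(i-n+1,\,n-1)$ entry gives $H_i(K(S_{k^n})_{hO(n)};\Q) \cong H_{i-n+1}(O_n(k); DO_n)$, as required. The only subtle point in this argument is verifying that the identification $H_{n-1}(K(S_{k^n});\Q) \cong DO_n$ really respects the $O_n(k)$-action, so that the spectral sequence sees the correct coefficient module; this will follow by chasing the equivariance through the statements of Corollary~\ref{cor:Gsusp} and Lemma~\ref{lem:innsphere}, each of which is already formulated equivariantly, so no genuine obstacle is expected beyond this bookkeeping.
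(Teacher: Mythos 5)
Your proposal is correct and takes essentially the same route as the paper: the $O_n(k)$-equivariant identification $K(S_{k^n}) \simeq \Sigma^\infty \Sigma' N\Decomp^\circ_{k^n}$ from Corollary~\ref{cor:Gsusp} together with Lemma~\ref{lem:innsphere}, concentrating the coefficients in the single degree $n-1$ where they give $DO_n$, followed by the collapsing homotopy-orbit spectral sequence. The only (inessential) difference is that you get the first isomorphism from the general fact $\pi_*(Y)\otimes\Q \cong H_*(Y;\Q)$ for any spectrum $Y$, while the paper instead observes that the stable-homotopy and rational-homology homotopy-orbit spectral sequences have the same $E^2$-page and both collapse; your treatment of the $n=1$ case via $\Sigma'\emptyset = S^0$ should be made explicit, as the paper does, but this is only bookkeeping.
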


\begin{proof}
We prove this by showing that the same spectral sequence converges to both.  

The homotopy orbit spectral sequence states that there is a spectral sequence
\[E^2_{s,t} = H_s(O(n), \pi_t K(S_{k^n})) \Rto \pi_{s+t}K(S_{k^n})_{hO(n)}.\]
Thus to prove the claim it suffices to show that 
\[\pi_t K(S_{k^n}) \cong \begin{cases}
    \Q \caseif n=1 \hbox{ and } t = 0, \\
    DO_n \caseif t = n-1, \\
    0 \caseotherwise.
    \end{cases}\]
By Corollary~\ref{cor:Gsusp}, there is an $O_n(k)$-equivariant equivalence $K(S_{k^n}) \simeq \Sigma^\infty \Sigma' N\Decomp^\circ_{k^n}$.    By Lemma~\ref{lem:innsphere}, $N\Decomp^\circ_{k^n}$ is homotopy equivalent to a wedge of $n-2$-spheres, so the spectrum $\Sigma^\infty \Sigma' N\Decomp^\circ_{k^n}$ has rational homotopy groups which are $0$ everywhere except in degree $n-1$, where it is a rational vector space isomorphic to  $DO_n$, as desired.  (We need to be a bit careful for the case $n=1$. Here $\Decomp^\circ_{k^1}$ is empty, so the spectrum $\Sigma^\infty \Sigma' N \Decomp^\circ_{k^n}$ is the sphere spectrum, giving the desired special case.)

The same argument shows that there is a spectral sequence
\[E^2_{s,t} = H_s(O(n), H_t (K(S_{k^n});\Q) \Rto H_{s+t}((K(S_{k^n})_{hO(n)};\Q),\]
since all that matters for the proof of the spectral sequence is that a homology theory is being applied (and both rational stable homotopy and rational homology are homology theories).  But because $N\Decomp^\circ_{k^n}$ is a wedge of sphere spectra, its rational stable homotopy and its homology are isomorphic.  Thus this spectral sequence is isomorphic at the $E^2$-page to the previous one. Since it collapses at the $E^2$ page the groups are isomorphic, as desired.
\end{proof}

In particular, this implies that the stable rank filtration  on $K(\Inn_k)$ produces a spectral sequence converging to $K_{s+t}(\Inn_k)\otimes\Q$ with $E^1$-page isomorphic to
\begin{center}
\begin{diagram}
  {3 & 0 & H_3(O(2); DO_2) & H_3(O(3); DO_3) & \cdots \\
   2 & 0 & H_2(O(2); DO_2) & H_2(O(3); DO_3) & \cdots \\
   1 & 0 & H_1(O(2); DO_2) & H_1(O(3); DO_3) & \cdots \\
   0 & \Q & H_0(O(2);DO_2)  & H_0(O(3); DO_3) & \cdots \\
   \phantom{*}  & 0                       & 1                       & 2
  \\};
  \node (A) at (m-4-2) {\phantom{$H_0(GL1_1)$}};
  \draw[->] (A.south west) -- +(0,5);
  \draw[->] (A.south west) -- +(10,0); 
  \to{1-3}{1-2} \to{1-4}{1-3}
  \to{2-4}{2-3} \to{2-3}{2-2}
  \to{3-3}{3-2} \to{3-4}{3-3}
  \to{1-5}{1-4} \to{2-5}{2-4} \to{3-5}{3-4}
  \to{4-5}{4-4} \to{4-4}{4-3} \to{4-3}{4-2}
\end{diagram}
\end{center}
Here the entries in the first column are $H_t(O(1),DO_1)$, but $DO_1 \cong \Q$
and $O(1) \cong \Z/2$, so these will all be zero after the $t=0$ case.  The
spectral sequence is all $0$ below the line $s=t$.  It is important to keep in
mind that all groups in this spectral sequences are \textbf{discrete}, because
the category $\Inn_k$ is not a topological category, and the groups $O(n)$ are
arising as automorphism groups of objects in $\Inn_k$.

\subsection{Rings and fields}

Let $k$ be a field, or a Dedekind domain.  Let $\Free_k$ be the category of finitely generated free $k$-modules and monomorphisms.  This is a convenient addition category with ranks (given by the rank of a module). By Theorem~\ref{thm:graded-part-SA}, the dimension
valuation on $K(\Free_k)$ has
\[F_n/F_{n-1} K(\Free_k) \simeq K(S_{k^n})_{h GL_n(k)}.\] By
Corollary~\ref{cor:KSA}
\[K(S_{k^n}) \simeq \Sigma^\infty \Sigma' N(\Decomp^\circ_{k^n}).\]
 
\begin{proposition} 
$N\Decomp^\circ_{k^n}$ is homotopy equivalent to a wedge of $n-2$-spheres.
\end{proposition}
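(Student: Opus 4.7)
The plan is to mirror the proof of Lemma~\ref{lem:innsphere} almost verbatim, swapping the orthogonal story for the story of decompositions of free modules. First I would invoke Theorem~\ref{thm:deorder} from Appendix~\ref{app:kupers}: it reduces the question of whether $N\Decomp^\circ_{k^n}$ is $(n-3)$-connected (and hence, being $(n-2)$-dimensional, a wedge of $(n-2)$-spheres) to the same question for the ordered decomposition poset $(\Decomp^{ord}_{k^n})^\circ$.

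Next I would identify $N(\Decomp^{ord}_{k^n})^\circ$ with the barycentric subdivision of the split Tits building $T^{spl}(k^n)$, whose simplices are the nontrivial flags $0\subsetneq M_1 \subsetneq \cdots \subsetneq M_{m-1} \subsetneq k^n$ of direct summands. The correspondence sends an ordered nontrivial decomposition $(U_1,\ldots,U_m)$ to the flag of partial sums $M_i = U_1\oplus\cdots\oplus U_i$; conversely, a flag of direct summands defines an ordered decomposition by iteratively choosing complements. For a field every subspace is a summand, so $T^{spl}(k^n)$ is the usual Tits building; for a Dedekind domain each inclusion $M_{i-1}\hookrightarrow M_i$ is a split inclusion of projective modules, and one recovers an ordered decomposition by a choice of splittings.

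Finally I would quote the appropriate homotopy-type theorem for $T^{spl}(k^n)$. When $k$ is a field this is the Solomon--Tits theorem \cite[Theorem 2]{quillen73-fingen} already used in the inner-product case. When $k$ is a Dedekind domain this is a theorem of Charney, who proved that the split building of $k^n$ is Cohen--Macaulay of dimension $n-2$ and hence homotopy equivalent to a wedge of $(n-2)$-spheres. Combining these with the identification above and with Theorem~\ref{thm:deorder} yields the claim.

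The main obstacle will be the Dedekind-domain case: I need to make the translation between ordered decompositions of $k^n$ into projective summands and the complex studied by Charney completely rigorous, since unlike the field case there is no canonical choice of complement. The key point is that a flag of direct summands $M_\bullet$ admits compatible splittings because each short exact sequence $0\rto M_{i-1}\rto M_i \rto M_i/M_{i-1}\rto 0$ splits (projectivity of the quotient, guaranteed by $M_i$ being a direct summand of $k^n$). Once this bijection on flags/decompositions is established, the rest of the argument is formal.
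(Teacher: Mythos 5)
Your overall strategy is exactly the paper's: reduce to the ordered decomposition poset via Theorem~\ref{thm:deorder}, identify that poset with (the barycentric subdivision of) a split Tits building, and quote Charney's theorem \cite[Theorem 1.1]{charney80} for its sphericity. However, there is a genuine gap in your step 2, and it is precisely the point you flag as "the main obstacle." The complex you describe --- flags of direct summands $0 \subsetneq M_1 \subsetneq \cdots \subsetneq M_{m-1} \subsetneq k^n$ --- is \emph{not} the split building that Charney's theorem is about, and it is \emph{not} isomorphic to the barycentric subdivision of $(\Decomp^{ord}_{k^n})^\circ$. Your map sending $(U_1,\ldots,U_m)$ to its partial sums is many-to-one: recovering the $U_i$ from the flag $M_\bullet$ requires choosing complements, and different choices give different ordered decompositions. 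Showing that compatible splittings \emph{exist} (your proposed fix) only proves this forgetful map is surjective on simplices; it does not produce the isomorphism you need, nor does it show the flag-of-summands complex has the right homotopy type.

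The correct identification, which is the one the paper uses, is with the split building whose simplices are flags of \emph{splittings} $(P_0,Q_0) \subseteq \cdots \subseteq (P_m,Q_m)$, i.e.\ pairs with $P_i \oplus Q_i = k^n$. With the complements recorded as part of the data, the correspondence with ordered decompositions becomes a genuine bijection: $(U_1,\ldots,U_m)$ goes to the chain of splittings given by the partial sums $P_i = U_1\oplus\cdots\oplus U_i$ together with $Q_i = U_{i+1}\oplus\cdots\oplus U_m$, and conversely one recovers $U_i$ canonically as $P_i \cap Q_{i-1}$. Charney's theorem is stated for exactly this complex (the $E_1$-splitting complex of \cite[Definition 17.9]{galatius_kupers_randal-williams18}), over fields and Dedekind domains alike, so once you use the split building rather than the building of bare summands your argument closes up. Note that in the inner-product case (Lemma~\ref{lem:innsphere}) your version of the identification does work, because orthogonal complements are canonical there; that is exactly the feature that fails for $\Free_k$ and forces the passage to splittings.
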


For finite fields, this is a theorem of Welker; see \cite{welker95}.

\begin{proof}
This is an application of Theorem~\ref{thm:deorder}.  By Theorem~\ref{thm:deorder}, $\Decomp^\circ_{k^n}$ is homotopy equivalent to a wedge of $n-2$-spheres if and only if $(\Decomp_{k^n}^{ord})^\circ$ is.  But $(N\Decomp_{k^n}^{ord})^\circ$ is isomorphic to the barycentric subdivision of the split Tits building of $k^n$ (also referred to as the \emph{$E_1$-splitting complex} in \cite[Definition 17.9]{galatius_kupers_randal-williams18}) where a simplex $(P_0,Q_0) \subseteq \cdots \subseteq (P_n,Q_n)$ of the split Tits building is associated to the decomposition $(P_0,P_1\cap Q_0,\ldots,P_{n-1}\cap Q_{n-2},Q_n)$.  Since the split Tits building is homotopy equivalent to a wedge of $n-2$-spheres \cite[Theorem 1.1]{charney80}, so is $N(\Decomp_{k^n}^{ord})^\circ$. Thus $N(\Decomp^\circ_{k^n})$ is a wedge of $n-2$-spheres, as desired.
\end{proof}

\begin{definition}
    Let 
    \[\Dec_n \defeq \tilde H_{n-2}(N \Decomp^\circ_{k^n};\Q). \]
\end{definition}

By an analogous argument to the previous part, we see that
\[\pi_i\left(K(S_{k^n})_{hGL_n(k)}\right) \otimes \Q  \cong H_i\left(
    K(S_{k^n})_{hGL_n(k)}; \Q\right) \cong H_{i-n+1}(GL_n(k), \Dec_n).\]
This gives a spectral sequence
\begin{center}
\begin{diagram}
  {3 & H_3(k^\times;\Q) & H_3(GL_2(k); \Dec_2) & H_3(GL_3(k); \Dec_3) & \cdots \\
   2 & H_2(k^\times;\Q) & H_2(GL_2(k); \Dec_2) & H_2(GL_3(k); \Dec_3) & \cdots \\
   1 & H_1(k^\times;\Q) & H_1(GL_3(k); \Dec_2) & H_1(GL_3(k); \Dec_3) & \cdots \\
   0 & \Q & H_0(GL_k(k);\Dec_2)  & H_0(GL_3(k); \Dec_3) & \cdots \\
   \phantom{*}  & 0                       & 1                       & 2
  \\};
  \node (A) at (m-4-2) {\phantom{$H_0(GL1_1)$}};
  \draw[->] (A.south west) -- +(0,5);
  \draw[->] (A.south west) -- +(10,0); 
  \to{1-3}{1-2} \to{1-4}{1-3}
  \to{2-4}{2-3} \to{2-3}{2-2}
  \to{3-3}{3-2} \to{3-4}{3-3}
  \to{1-5}{1-4} \to{2-5}{2-4} \to{3-5}{3-4}
  \to{4-5}{4-4} \to{4-4}{4-3} \to{4-3}{4-2}
\end{diagram}
\end{center}
converging to $K_{p+q}(k)\otimes \Q$ (above $p+q=0$, where it may be different
because we are looking at $K(\Free_k)$ and not $K(\mathbf{Proj}_k)$
\cite[Theorem 4.11(b)]{weibel_kbook}).  (See also \cite[Remark 13.30]{galatius_kupers_randal-williams18}, where a closely-related spectral sequence is discussed.)

If $k$ is an infinite field,
\cite[Theorem 8.1]{galatius_kupers_randal-williams} states that when $q < p$ this is $0$. This simplifies
this spectral sequence to the following form:
\begin{center}
\begin{diagram}
  {3 & H_3(GL_1) & H_3(GL_2, \tilde H_0(\Dec_2)) & H_3(GL_3, \tilde H_1(\Dec_3)) \\
   2 & H_2(GL_1) & H_2(GL_2, \tilde H_0(\Dec_2)) & H_2(GL_3, \tilde H_1(\Dec_3)) \\
   1 & H_1(GL_1) & H_1(GL_2, \tilde H_0(\Dec_2)) & 0 \\
   0 & \Z & 0 & 0 \\
   \phantom{*}  & 0                       & 1                       & 2
  \\};
  \node (A) at (m-4-2) {\phantom{$H_0(GL1_1)$}};
  \draw[->] (A.south west) -- +(0,5);
  \draw[->] (A.south west) -- +(10,0); 
  \draw[color=green!70!black] (m-3-3.south west) rectangle (m-3-3.north east);
  \draw[color=green!70!black] (m-2-4.south west) rectangle (m-2-4.north east);
  \to{1-3}{1-2} \to{1-4}{1-3}
  \to{2-4}{2-3} \to{2-3}{2-2}
  \to{3-3}{3-2}
\end{diagram}
\end{center}
The groups in green boxes are torsion if $k$ is an infinite field.  In this case, this gives an alternate form of the spectral sequence discussed by Rognes in \cite[Section 12]{rognes21}, which could be compared to the chain complexes desired by Beilinson and Lichtenbaum.  

\begin{question}
    How do the rows in this spectral sequence compare to the rows in Rognes's?  How do they compare to motivic complexes?
\end{question}

\subsection{Rognes's poset filtration} \label{sec:rogneswald}

Consider Rognes's spectrum $D(R^k)$ \cite[Definition 3.9]{rognes} for a commutative ring $R$.  We can
rephrase his poset filtration in terms of a valuation in the following way.

\begin{definition} \label{def:spanning}
  Let $R$ be a commutative ring.  Consider the convenient addition category of free $R$-modules and monomorphisms with free cokernel;  let $\Sub_{R^k}$ be a submodule structure
  on $R^k$.

  A \emph{spanning poset} in $\Sub_{R^k}$ is a full subposet $\mathcal{S}$ of
  $\Sub_{R^k}$ such that the following conditions hold:
  \begin{itemize}
  \item[(SP1)] $\mathcal{S}$ contains $0$ and is closed under intersections in
    $\Sub_{R^k}$ 
  \item[(SP2)] The colimit of $\mathcal{S}$ in $\Sub_{R^k}$ is $R^k$. 
  \end{itemize}
  A spanning poset is \emph{minimal} if it contains no proper subposet which is
  also a spanning poset; it is \emph{trivial} if it is the poset $0 \rto R^k$.  A spanning poset $\mathcal{S}$ is equipped with a
  canonical inclusion $\iota_{\mathcal{S}}: \mathcal{S} \rto \Sub_{R^k}$.

  Let $\P$ be the poset with
  \begin{description}
  \item[objects] minimal spanning posets in $\Sub_{R^k}$,
  \item[relation] $\mathcal{S} \leq \mathcal{S}'$ if there exists a functor $F:
    \mathcal{S}' \rto \mathcal{S}$ such that there is a natural transformation
    $\iota_{\mathcal{S}'} \Rto \iota_{\mathcal{S}}F$. 
  \end{description}
  Then $\P$ contains an initial object, given by the trivial spanning poset; we define $\P^\circ$ to be the full subposet containing all nontrivial spanning posets.
\end{definition}
The poset $\P$ has depth $2k-2$, which we can see from the following analysis.

Define
\[\mathrm{size}(\mathcal{S}) = 2|\ob \mathcal{S}| - |\mathrm{atomic\ morphisms\
    in\ }\mathcal{S}|.\]
For any relation $\mathcal{S} < \mathcal{S}'$ we have
$\mathrm{size}(\mathcal{S}) < \mathrm{size}(\mathcal{S}')$.  The minimal size of
a minimal spanning poset is $2$, and the maximal size is $2k$.  Thus $N\P^\circ$
is a $2k-3$-dimensional simplcial set.

An $m$-simplex $\sigma\in D^n(R^k)_m$ (the $n$-th space in $D(R^k)$) can be
thought of as a functor $F_\sigma: [m]^n \rto \Sub_{R^k}$ satisfying certain
conditions (see \cite[Definition 3.9]{rognes}).  A \emph{pick site} (see
\cite[Definition 5.6]{rognes}) is an object $i\in [m]^n$ such that
\[\colim_{j<i} F_\sigma(j) \not\cong F_\sigma(i).\]
The \emph{submodule configuration} is the full subposet of $[m]^n$ given by the
initial object together with the pick sites.  The submodule configuration is
always a minimal spanning poset, by the definition of the
$S_\dotp$-construction.

Since we are working with free modules and ranks are well-defined, we can see
that submodule configurations are always finite of cardinality at most $k$.

\begin{definition}
  We can define a $\P$-valuation $\nu^{sm}$ on $D(R^k)$ by sending a simplex
  $\sigma$ to its submodule configuration.
\end{definition}

\begin{lemma}
  $\nu^{sm}$ is a well-defined $\P$-valuation on $D(R^k)$.  Moreover, it is
  equivariant with respect to the natural action of $GL_k(R)$ on $\Sub_{R^k}$ and
  on $D(R^k)$. 
\end{lemma}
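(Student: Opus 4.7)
The plan is to verify the defining conditions of a $\P$-valuation on a symmetric spectrum one by one. First I would show that for any simplex $\sigma \in D(R^k)^n_m$, corresponding to a functor $F_\sigma: [m]^n \rto \Sub_{R^k}$, the submodule configuration $\nu^{sm}(\sigma)$ lies in $\P_+$. By the definition of a pick site, every non-pick-site $i$ satisfies $F_\sigma(i) = \colim_{j<i} F_\sigma(j)$, so $F_\sigma$ factors (up to canonical isomorphism) through the subposet of $[m]^n$ spanned by pick sites and the initial object. Hence the image is closed under intersections (inherited from $\Sub_{R^k}$) and has $R^k$ as its colimit, giving (SP1) and (SP2) of Definition~\ref{def:spanning}. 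Minimality is forced because each pick site's image cannot be recovered as the colimit of images of pick sites strictly below it. Basepoint simplices are sent to $+\in\P_+$.

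Second, I would verify that $\nu^{sm}$ extends to a functor $\El D(R^k)_n \rto \P_+$ compatibly with the spectrum structure maps. A morphism in $\El D(R^k)_n$ is represented by some $\phi$ in $\Delta$ satisfying $F_{\phi^*\sigma} = F_\sigma \circ \phi^n$. A pick site $i'$ of $\phi^*\sigma$ maps to some $\phi^n(i') \in [m]^n$ which is either a pick site of $\sigma$ or a colimit of such; in either case postcomposition with $F_\sigma$ gives a functor $\nu^{sm}(\sigma) \rto \nu^{sm}(\phi^*\sigma)$ equipped with a natural transformation of inclusions into $\Sub_{R^k}$, witnessing $\nu^{sm}(\phi^*\sigma) \leq \nu^{sm}(\sigma)$ in $\P$. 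For compatibility with the structure maps $D(R^k)_n \sma S^1 \rto D(R^k)_{n+1}$, one adjoins a trivial new coordinate: the set of pick sites and their images under $F$ are unchanged up to reindexing, producing equality of submodule configurations.

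Finally, the action of $g\in GL_k(R)$ on $D(R^k)$ comes from the induced automorphism $g_*:\Sub_{R^k} \rto \Sub_{R^k}$; by functoriality $F_{g\cdot \sigma} = g_* \circ F_\sigma$. Since $g_*$ preserves containment, intersection, and colimits in $\Sub_{R^k}$, the set of pick sites of $g\cdot\sigma$ coincides with that of $\sigma$, and their images transform by $g_*$; hence $\nu^{sm}(g\cdot\sigma) = g_*\cdot \nu^{sm}(\sigma)$, giving $GL_k(R)$-equivariance. The main obstacle will be the face-map verification in the second step: when $\phi^n$ identifies two pick sites, one has to check that the resulting coarser subposet is again \emph{minimal} (not just spanning) and that the induced functor and natural transformation precisely match the order on $\P$ given in Definition~\ref{def:spanning}. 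A clean way to handle this is to reduce it to Rognes's bookkeeping of pick sites under simplicial operations in \cite[Section 5]{rognes}, rather than re-deriving it from scratch.
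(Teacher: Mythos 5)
The paper states this lemma with no proof at all, so there is no argument of record to compare yours against; the surrounding text merely asserts that ``the submodule configuration is always a minimal spanning poset, by the definition of the $S_\dotp$-construction.'' Your outline identifies the right checklist (configurations land in $\P$, functoriality on $\El D(R^k)_n$, compatibility with the structure maps, equivariance), and the equivariance step is fine as written: $g_*$ is an automorphism of $\Sub_{R^k}$ preserving inclusions, intersections and colimits, so it permutes pick sites and conjugates configurations.

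Two steps, however, are genuinely incomplete. First, (SP1) and minimality are asserted rather than derived: the image of a subposet under $F_\sigma$ is not automatically closed under intersections, and ``no pick site is the colimit of the pick sites below it'' does not by itself exclude a proper spanning subposet --- an element can be redundant for (SP2) without being a colimit of elements \emph{below} it (think of three lines spanning a plane). Both facts have to be extracted from the conditions Rognes imposes on the diagrams in \cite[Definition 3.9]{rognes} together with the rank bound on the number of pick sites. Second, and more seriously, your witness for $\nu^{sm}(\phi^*\sigma) \leq \nu^{sm}(\sigma)$ is built in the wrong direction: Definition~\ref{def:spanning} requires a functor $\nu^{sm}(\sigma) \rto \nu^{sm}(\phi^*\sigma)$ together with a natural transformation of the inclusions into $\Sub_{R^k}$, i.e.\ each pick-site image of $\sigma$ must be exhibited inside a chosen pick-site image of $\phi^*\sigma$; but the data you describe --- where the pick sites of $\phi^*\sigma$ land under $\phi^n$ --- naturally gives an assignment \emph{out of} $\nu^{sm}(\phi^*\sigma)$, not into it. One must instead assign to each pick site $i$ of $\sigma$ a pick site of $\phi^*\sigma$ whose value contains $F_\sigma(i)$, and verify that such a choice exists, is order-preserving, and is compatible with composition. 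Deferring this bookkeeping to \cite[Section 5]{rognes} is a legitimate strategy (and evidently the authors' own intent), but as written the key inequality is not established.
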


From the above analysis we conclude the following:
\[D(R^k) \simeq \Sigma^\infty \Sigma N\P^\circ,\] and this equivalence is
$GL_k(R)$-equivariant.  We know that $\Sigma N \P^\circ$ is $2k-2$-dimensional.
Moreover, due to Rognes's analysis showing that
$D(R^k) \simeq \Sigma^\infty \Sigma (\Sigma^{-1}D^V(R^k))$, where
$\Sigma^{-1}D^V(R^k)$ is the common basis complex (\cite[Definition
14.5']{rognes}), we get the following conjecture:
\begin{conjecture}
$GL_k(R)$-equivariantly,
  \[\Sigma^{-1}D^V(R^k) \simeq \Sigma N\P^\circ.\]
\end{conjecture}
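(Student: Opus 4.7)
The plan is to upgrade the stable equivalence already implicit in the text---combining Rognes's identification $D(R^k)\simeq \Sigma^\infty\Sigma(\Sigma^{-1}D^V(R^k))$ with the new equivalence $D(R^k)\simeq \Sigma^\infty\Sigma N\P^\circ$ obtained from Corollary~\ref{cor:filtered-spectrum-susp} applied to the valuation $\nu^{sm}$---to an unstable $GL_k(R)$-equivariant equivalence of spaces. The content of the conjecture is precisely this desuspension.

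First I would construct an explicit equivariant comparison map. Given a chain $\mathcal{S}_0<\cdots<\mathcal{S}_m$ of nontrivial minimal spanning posets, the atoms of $\mathcal{S}_0$ are free summands in direct sum that together span $R^k$ and so admit a common basis; choosing such a basis coherently compatibly with the refinements in the chain picks out a simplex of $D^V(R^k)$. Suitably extended over the cone point, this gives an equivariant simplicial map
\[\phi : \Sigma N\P^\circ \longrightarrow \Sigma^{-1}D^V(R^k).\]
Equivariance is automatic since both source and target are defined intrinsically in terms of the $GL_k(R)$-set $\Sub_{R^k}$.

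Next I would verify that $\phi$ is a homotopy equivalence. Both spaces are simply connected, and a Tits-building argument in the spirit of Lemma~\ref{lem:innsphere} (combined with the Kupers appendix results invoked throughout Section~\ref{sec:applications}) identifies $\Sigma N\P^\circ$ with a wedge of $(k-1)$-spheres; by the cited input results of \cite{galatius_kupers_randal-williams, miller_patzt_wilson}, $\Sigma^{-1}D^V(R^k)$ is also a wedge of $(k-1)$-spheres. Since $\Sigma^\infty\phi$ recovers the stable equivalence already in hand, $\phi$ induces an isomorphism on stable homology; for simply-connected wedges of spheres concentrated in a single degree the stable Hurewicz map is an isomorphism, so this is an unstable homology isomorphism, and Whitehead's theorem promotes it to a homotopy equivalence.

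The main obstacle is the simpliciality and canonicality of $\phi$: a refinement $\mathcal{S}<\mathcal{S}'$ splits atoms of $\mathcal{S}$ into atoms of $\mathcal{S}'$ in a way that is not literally subset inclusion, so ordering atoms coherently across a chain of refinements requires care. A cleaner route may be to pass through an ordered variant of $\P^\circ$, in which each minimal spanning poset carries a linear ordering of its atoms, and invoke a deorder functor analogous to Theorem~\ref{thm:deorder}; the unordered equivalence would then follow via a Quillen Theorem~A comparison against an ordered form of the common basis complex. A secondary concern is that establishing the wedge-of-spheres conclusion for both spaces likely requires hypotheses on $R$ (field, Dedekind domain, or similar), so a fully general formulation may have to either impose such hypotheses or replace ``homotopy equivalence'' by ``equivalence on a suitable range of connectivity'' in the spirit of Theorem~\ref{thm:nconn}.
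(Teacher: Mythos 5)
This statement is not proved in the paper: it is posed there as a conjecture, and the authors only remark that, since both sides are simply connected for $k\geq 2$ and agree after applying $\Sigma^\infty$, it would suffice to construct a $GL_k(R)$-equivariant map inducing an isomorphism on homology. Your proposal follows exactly that outline but does not supply the missing content, and the steps you defer are precisely the open ones. The construction of $\phi$ is the crux: a simplex of $N\P^\circ$ is a chain of minimal spanning posets, while a simplex of the common basis complex $\Sigma^{-1}D^V(R^k)$ is a set of submodules admitting a common basis, and the combinatorics do not match in any obvious simplicial way ($N\P^\circ$ is $(2k-3)$-dimensional, the common basis complex is $(2^k-3)$-dimensional), so ``choosing a common basis coherently compatibly with the refinements'' is exactly the unconstructed datum. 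Moreover, any construction that proceeds by choosing bases is not intrinsic to the $GL_k(R)$-set $\Sub_{R^k}$, so equivariance is not ``automatic''; and even granting a map, the claim that $\Sigma^\infty\phi$ ``recovers the stable equivalence already in hand'' is unjustified --- the known stable equivalence arises from the valuation machinery (Corollary~\ref{cor:filtered-spectrum-susp} applied to $\nu^{sm}$) together with Rognes's identification $D(R^k)\simeq\Sigma^\infty\Sigma(\Sigma^{-1}D^V(R^k))$, and nothing forces an independently built $\phi$ to realize that composite, which is what your homology isomorphism (the input to Whitehead) requires.

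The wedge-of-spheres input you invoke is also not available and is not needed. Lemma~\ref{lem:innsphere} and Theorem~\ref{thm:deorder} concern the decomposition posets $\Decomp_A^\circ$, not the poset $\P^\circ$ of minimal spanning posets; the assertion that $N\P^\circ$ is a wedge of spheres is itself the second conjecture of that section (in dimension $2k-3$, so your ``$(k-1)$-spheres'' for $\Sigma N\P^\circ$ is off --- the relevant degree is $2k-2$, matching Rognes's range $[k-1,2k-2]$ for the common basis complex); and the concentration results of \cite{galatius_kupers_randal-williams, miller_patzt_wilson} are known for fields and similarly nice rings, not for a general commutative ring $R$. Fortunately the detour through Hurewicz and wedges of spheres is unnecessary: homology is a stable invariant, so if $\Sigma^\infty\phi$ were known to be an equivalence then $\phi$ would already induce a homology isomorphism between simply connected spaces (for $k\geq 2$) and Whitehead's theorem would finish the argument. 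Thus the genuine gaps are the two steps you acknowledge but do not carry out: an explicit equivariant simplicial construction of $\phi$, and the verification that it induces a homology isomorphism (equivalently, realizes the known stable equivalence). Without them the proposal does not go beyond the paper's own remark accompanying the conjecture.
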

We know that this is true after applying $\Sigma^\infty$, and thus that these spaces have the same homology.  Moreover, when $k \geq 2$ these are simply-connected, so to show that these are weakly equivalent it should be enough to construct a $GL_k(R)$-equivariant map between them that induces an isomorphism on homoloty.

Moreover, given the examples above, we can conjecture the following:
\begin{conjecture}
  $\P^\circ$ is a wedge of $2k-3$-spheres. 
\end{conjecture}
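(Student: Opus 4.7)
The plan is to attack this conjecture by relating $N\P^\circ$ to the common basis complex and then importing known wedge-of-spheres results. The first step would be to prove the immediately preceding conjecture, that $\Sigma^{-1}D^V(R^k) \simeq \Sigma N\P^\circ$ holds $GL_k(R)$-equivariantly at the space level. We already know the two sides become equivalent after $\Sigma^\infty$ by combining Rognes's $D(R^k) \simeq \Sigma^\infty \Sigma(\Sigma^{-1}D^V(R^k))$ with the equivalence $D(R^k) \simeq \Sigma^\infty \Sigma N\P^\circ$ derived in this paper. Since both spaces are simply-connected for $k \geq 2$, a $GL_k(R)$-equivariant map inducing an isomorphism on homology would suffice, as the authors themselves note. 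A natural candidate map sends a simplex of the common basis complex---a set $\{M_1,\ldots,M_r\}$ of submodules admitting a common basis---to the minimal spanning poset obtained by closing $\{0, M_1,\ldots,M_r\}$ under intersection and then truncating to the minimal spanning subposet in the sense of Definition~\ref{def:spanning}. One would then verify that nondegenerate simplices go to strict chains, and that the induced map on homology is an isomorphism using the stable equivalence already in hand.

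Given the preceding conjecture, the wedge-of-spheres statement for $\P^\circ$ would follow from the analogous statement for the common basis complex. For $R$ a field or other sufficiently nice ring, the results of \cite{galatius_kupers_randal-williams, miller_patzt_wilson} show that $\Sigma^{-1}D^V(R^k)$ has reduced homology concentrated in degree $2k-3$. Under the conjectured equivalence, $\Sigma N\P^\circ$ would then be a simply-connected $(2k-2)$-dimensional CW complex with free homology concentrated in a single degree, and a standard Hurewicz-plus-obstruction argument produces a weak equivalence to a wedge of $(2k-2)$-spheres; desuspending recovers the claim.

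An alternate route, avoiding the first conjecture entirely, would imitate the arguments for $\Inn_k$ and $\Free_k$ in Section~\ref{sec:applications}. Apply Theorem~\ref{thm:deorder} to pass from $\P^\circ$ to an ordered variant $(\P^{ord})^\circ$ in which the atoms of each minimal spanning poset are totally ordered, and then identify $N(\P^{ord})^\circ$ with (the barycentric subdivision of) a suitable split building or splitting complex on $R^k$. Charney's theorem \cite[Theorem 1.1]{charney80} would then yield the wedge-of-$(2k-3)$-spheres decomposition for the ordered version, and transferring back through Theorem~\ref{thm:deorder} would finish the proof, in direct analogy with Lemma~\ref{lem:innsphere}.

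The main obstacle is the combinatorics of minimal spanning posets in the presence of non-free cokernels or beyond the PID setting. For general commutative rings $R$, submodules of $R^k$ need not be free, intersections of nice subposets can behave unexpectedly, and the correspondence between spanning posets and the usual building-theoretic objects becomes murky. In this generality one may need to replace Charney's theorem by a more refined connectivity estimate, or to work with partition-style data in the style of \cite{galatius_kupers_randal-williams18}. The dimension $2k-3$ itself, emerging from the $\mathrm{size}$ function argument, already signals the combinatorial subtlety: it exceeds the dimension of the usual split Tits building, so the ``larger'' minimal spanning posets must be handled carefully in any shellability or discrete Morse argument.
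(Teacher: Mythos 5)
The statement you are addressing is stated in the paper only as a conjecture---no proof is given there---so your proposal must stand on its own, and as written it is a research program with two genuine gaps rather than a proof. The most serious one is the desuspension step. Even granting everything in your first two paragraphs, knowing that $\Sigma N\P^\circ$ is a wedge of $(2k-2)$-spheres does not imply that $N\P^\circ$ is a wedge of $(2k-3)$-spheres: suspension does not reflect homotopy type, and the Hurewicz--Whitehead argument must be run on $N\P^\circ$ itself, which for $k\geq 3$ requires $N\P^\circ$ to be simply connected. Neither the stable equivalence nor any homology computation supplies this; establishing the (simple) connectivity of the poset of minimal spanning posets is precisely the hard content of the conjecture, and your proposal never addresses it. Note also that the homological input is available without the space-level comparison map of the preceding conjecture: the stable equivalence $\Sigma^\infty\Sigma' C_k \simeq \Sigma^\infty\Sigma' N\P^\circ$ already identifies $\tilde H_*(N\P^\circ)$ with the homology of the common basis complex, and since $N\P^\circ$ is $(2k-3)$-dimensional its top homology is automatically free. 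So the effort spent constructing and verifying an explicit equivariant map (itself delicate: closing under intersections and ``truncating to a minimal spanning subposet'' is not obviously well defined or order-preserving, and checking that a hand-built map realizes an equivalence obtained from a zig-zag of filtration comparisons is nontrivial) buys you nothing for this particular conjecture, while the step you actually need is missing. Moreover the concentration results you cite hold only for fields and certain nice rings, so at best this route reduces the conjecture, for those rings, to the unaddressed connectivity question; for general commutative $R$ it reduces to Rognes's still-open conjecture.

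The alternate route via Theorem~\ref{thm:deorder} would fail for a concrete reason: that theorem and Charney's theorem concern the decomposition poset $\Decomp_{R^k}$, whose nerve is $(k-2)$-dimensional and whose ordered variant is identified with the $(k-2)$-dimensional split building. The poset $\P^\circ$ of Definition~\ref{def:spanning} is a different object---its elements are posets of submodules rather than direct-sum decompositions, its nerve is $(2k-3)$-dimensional, and there is no evident ``ordered variant'' fitting the Mirzaii--van der Kallen framework nor a known building whose barycentric subdivision it matches. Charney's theorem produces wedges of $(k-2)$-spheres, not $(2k-3)$-spheres, so the analogy with Lemma~\ref{lem:innsphere} does not transfer; as you yourself observe, the dimension $2k-3$ signals that genuinely new combinatorial input (a shellability, Cohen--Macaulayness, or discrete Morse argument for $\P^\circ$ itself) would be required.
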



\appendix

\section{On homotopy types of decomposition posets (by Alexander
  Kupers)} \label{app:kupers}

The purpose of this appendix is to provide a result relating the homotopy types of decomposition posets of objects in convenient addition categories (see Definition~\ref{def:convenient}) to that of their ordered variants, which are more commonly studied in the literature. We will also remark on the relationship between this result and some notions from \cite{galatius_kupers_randal-williams18}. 

\subsection{Comparison} Before stating the main theorem of this appendix, we require some definitions.

\begin{definition}
  A convenient addition category $\A$ \emph{has ranks} if there exists a monoid homomorphism
  \[\rk: (\ob \A/\mathrm{iso},\oplus) \rto \Z_{\geq 0},\]
  which we call \emph{rank}, such that every nonzero object in $\A$ has a nonzero rank.
\end{definition}

\begin{example}The convenient addition categories described in
Section~\ref{sec:applications} have ranks. In the case of $\FinSet$ the rank is
the size of the set, in the case of orthogonal decompositions we take the
dimensions of the vector space, and in the case of finitely-generated free modules over a Dedekind domain we take the rank of a module.\end{example}

\begin{definition} A \emph{chain of length $n$} in a poset $\P$ is a sequence $p_0 < \cdots < p_n$. A \emph{longest chain} is a chain $p_0 < \cdots < p_n$ is a chain such that there does not exist a chain $q_0 < \cdots < q_m$ with $m > n$. A poset is \emph{bounded} if a longest chain exists.
\end{definition}



The main theorem of this appendix is the following:

\begin{theorem} \label{thm:deorder} Let $\A$ be a convenient decomposition
  category which has ranks, and moreover assume that for all nonzero $A\in \A$,
  the length of the longest chain in $\Decomp_A$ is $\rk(A)-1$.  Then
  $(\Decomp_A^{ord})^\circ$ is homotopy equivalent to a wedge of
  $(\rk(A)-2)$-spheres if and only if $\Decomp^\circ_A$ is homotopy equivalent to a
  wedge of $(\rk(A)-2)$-spheres.
\end{theorem}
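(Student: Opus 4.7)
The plan is to reduce the wedge-of-spheres statement to a connectivity statement, then to compare the two posets via a combination of local structural analysis and induction on $\rk(A)$.

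First I would check that both nerves have dimension at most $\rk(A)-2$. A strictly increasing chain $\tilde p_0 < \tilde p_1 < \cdots < \tilde p_k$ in $\Decomp_A^{ord}$ projects to a strictly increasing chain of the same length in $\Decomp_A$: if two consecutive ordered decompositions had the same underlying unordered decomposition, the unique order-compatible refinement morphism between them would force the orderings to coincide. Thus the chain-length hypothesis transfers from the unordered poset to the ordered one, and $N\Decomp_A^\circ$ and $N(\Decomp_A^{ord})^\circ$ are both bounded in dimension by $\rk(A)-2$. By the standard Hurewicz argument for CW complexes of bounded dimension, a space of dimension $\leq d$ is homotopy equivalent to a wedge of $d$-spheres if and only if it is $(d-1)$-connected. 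So the theorem reduces to showing that $\Decomp_A^\circ$ is $(\rk(A)-3)$-connected if and only if $(\Decomp_A^{ord})^\circ$ is.

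Next I would proceed by induction on $\rk(A)$. The base cases $\rk(A) \leq 2$ are immediate: the posets are empty, a point, or a finite discrete set, and these trivially match in the wedge-of-spheres sense. For the inductive step, the key structural observation is that any open interval $(p_0,p_1)$ in $\Decomp_A$, with $p_1$ a refinement of $p_0 = \{B_1,\ldots,B_m\}$ into $\bigoplus_{i \in I_j} A_i = B_j$, is isomorphic to a product $\prod_j \bar\Pi_{|I_j|}$ of proper partition lattices, while the corresponding interval in $\Decomp_A^{ord}$ is a product of proper ordered partition lattices. Both types of factors are classically Cohen--Macaulay and homotopy equivalent to wedges of spheres of matching top dimension (by Bj\"orner in the unordered case and Bj\"orner--Wachs in the ordered case), so both decomposition posets are locally Cohen--Macaulay of the expected dimension $\rk(A)-2$.

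To transfer the global connectivity across the forgetful functor $\pi\colon (\Decomp_A^{ord})^\circ \to \Decomp_A^\circ$, I would filter $(\Decomp_A^{ord})^\circ$ by the number of pieces in the underlying unordered decomposition and examine the associated spectral sequence. The resulting $E^1$-page is assembled from products of ordered decomposition posets of the constituent subobjects, to which the inductive hypothesis applies. Comparing this with the analogous filtration on $\Decomp_A^\circ$ identifies the two spectral sequences up to $\Sigma_n$-actions on orderings; because these actions act freely on the relevant top-dimensional cells, they do not perturb the wedge-of-spheres property, and the equivalence of $(\rk(A)-3)$-connectivity follows.

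The main obstacle is making this spectral sequence comparison precise and showing cleanly that the $\Sigma_n$-actions on orderings at different strata do not break the connectivity bookkeeping. A cleaner alternative, which may be what Kupers actually employs, is to work directly within the framework of $E_k$-cellular algebras and their splitting complexes from \cite{galatius_kupers_randal-williams18}, where both ordered and unordered decomposition complexes appear naturally as comparison objects; the matching homotopy types in that framework give the theorem more or less by construction. Either way the heart of the argument is that the $\Sigma_n$-symmetry acting on orderings of a fixed decomposition induces the classical comparison between proper ordered and unordered partition lattices, and induction on rank then propagates this local comparison to the global statement.
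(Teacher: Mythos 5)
Your reduction to a connectivity statement and your induction on $\rk(A)$ match the paper, and your observation that closed intervals in $\Decomp_A$ are products of partition lattices is correct; but the heart of the argument --- the mechanism that transfers $(\rk(A)-3)$-connectivity \emph{in both directions} across the forgetful map $(\Decomp_A^{ord})^\circ \rto \Decomp_A^\circ$ --- is missing. The step you flag as the ``main obstacle'' is exactly where the proposal fails: filtering by the number of pieces and arguing that the $\Sigma_n$-actions on orderings ``act freely on the relevant top-dimensional cells'' and hence ``do not perturb the wedge-of-spheres property'' is not a valid inference. Quotients (or homotopy orbits) by free actions of finite groups do not preserve connectivity or the wedge-of-spheres property (the antipodal $\Sigma_2$-action on $S^m$ is already a counterexample), and in any case the unordered nerve is not obtained from the ordered one by such a quotient. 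Moreover a one-directional spectral-sequence comparison cannot by itself yield the ``if and only if''; some fiberwise two-way criterion is needed. Your fallback appeal to the splitting complexes of Galatius--Kupers--Randal-Williams is also too vague: the comparison there is not ``by construction'' but requires the vanishing-line transfer machinery (their Theorem 14.4), as the appendix itself notes.

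What the paper actually does at this step is apply the Mirzaii--van der Kallen poset fiber theorem (Lemma~\ref{lem:LvdK}) to the forgetful map $F:(\Decomp_A^{ord})^\circ \rto ((\Decomp_A)^\op)^\circ$ with $t(p)=\rk(A)-|p|$, which requires two fiber computations you do not supply: (a) the upward link $(\Decomp_A^\circ)_{>p}$ is the join $\Decomp^\circ_{A_1} * \cdots * \Decomp^\circ_{A_{|p|}}$, to which the inductive hypothesis applies, giving $(t(p)-2)$-connectivity; and (b) the fiber $(\Decomp_A^{ord})^\circ_{\leq p} = \{a \mid p \leq F(a)\}$ is the poset of ordered partitions of $\{1,\ldots,|p|\}$ into at least two nonempty blocks, i.e.\ the boundary of the permutahedron, a sphere of dimension $|p|-2$, giving the complementary connectivity bound. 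It is this theorem, with these two identifications, that produces the two-way transfer of connectivity; without it (or an equivalent substitute such as the vanishing-line argument carried out in detail), your proposal does not prove the statement.
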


In many situations $(\Decomp_A^{ord})^\circ$ has been studied before, and this theorem allows us to obtain information about $\Decomp^\circ_A$. The main tool for its proof is the following result due to Mirzaii and van der Kallen \cite[Theorem 3.8]{mirzaii_vanderkallen}:

\begin{lemma}[Mirzaii--van der Kallen] \label{lem:LvdK} Suppose that $\bA$ and $\bB$ are bounded posets and we have a map of posets
  \[F:\bA \rto \bB^\op.\] 
Suppose that in addition we have $n\in \Z$ and a strictly increasing function $t: \bB \rto \Z$ such that $\bB_{<b}$ is $(t(b)-2)$-connected and $\bA_{\leq b} = \{a\in \bA \,|\, b \leq F(a)\}$ is $(n - t(b)-1)$-connected. Then $\bA$ is $(n-1)$-connected if and only if $\bB$ is $(n-1)$-connected.
\end{lemma}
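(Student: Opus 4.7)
The plan is to prove the biconditional by showing that, under the hypotheses, the natural map on nerves $NF: N\bA \rto N\bB^\op$ induces isomorphisms on $\pi_i$ for all $i \leq n-1$; this gives both implications at once.

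First, I would replace $N\bA$ with a homotopy colimit presentation over $\bB^\op$. Consider the Grothendieck-style poset $\bG$ whose elements are pairs $(a,b)$ with $b \leq F(a)$ in $\bB^\op$, ordered componentwise. The forgetful functor $\bG \rto \bA$ has fiber over $a$ equal to $\{b \in \bB^\op \mid b \leq F(a)\}$, which has $F(a)$ as a terminal element and is therefore contractible. By Quillen's Theorem A, $N\bG \simeq N\bA$. The other projection $\bG \rto \bB^\op$ exhibits $N\bG$ as the homotopy colimit $\hocolim_{b \in \bB^\op} N(\bA_{\leq b})$, yielding a canonical comparison map
\[ N\bA \;\simeq\; \hocolim_{b \in \bB^\op} N(\bA_{\leq b}) \;\rto\; N\bB^\op. \]

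Next, I would filter $N\bB^\op$ by the sublevel sets of $t$: set $\bB^{(p)} \defeq t^{-1}(\Z_{\leq p})$. Since $t$ is strictly increasing, the passage from $N\bB^{(p-1)}$ to $N\bB^{(p)}$ attaches, for each $b$ with $t(b)=p$, a cone on $N\bB_{<b}$, which is $(p-2)$-connected by hypothesis. The corresponding filtration on the homotopy colimit glues in, at stage $p$ and above each such $b$, cells whose shape is a join $N\bB_{<b} * N(\bA_{\leq b})$. By the standard join connectivity formula, these joins are $((p-2)+(n-p-1)+2) = (n-1)$-connected, so the attaching data lives at or above degree $n-1$ and does not affect $\pi_i$ for $i \leq n-1$.

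Comparing the two filtrations in parallel, an induction on $p$ then shows that $NF$ is an isomorphism on $\pi_i$ for $i \leq n-1$, which gives the desired biconditional. The main obstacle will be making the cell-attachment bookkeeping rigorous: one must carefully verify that the filtration on $\hocolim N(\bA_{\leq b})$ really attaches join-cells of the stated form, handle the base case at the minimal $t$-value (where there is no base link but the fiber is by hypothesis highly connected), and ensure the filtered comparison with $N\bB^\op$ is compatible level by level. The numerical identity $(t(b)-2) + (n-t(b)-1) + 2 = n-1$ is the heart of the argument, exhibiting the strict monotonicity of $t$ as precisely the feature that balances the connectivities of base links against fibers.
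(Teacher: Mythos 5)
Two preliminary remarks. First, the paper does not prove this lemma: it is quoted from Mirzaii and van der Kallen \cite[Theorem 3.8]{mirzaii_vanderkallen} (with a strengthening from Looijenga--van der Kallen mentioned in a remark), so there is no internal proof to compare against; you are attempting to reprove a cited external result. Second, your overall route --- replace $N\bA$ by the Grothendieck poset $\bG$, identify $N\bG \simeq \hocolim_{b\in\bB^\op} N(\bA_{\leq b})$, filter by the strictly increasing function $t$, and play the connectivity of $\bB_{<b}$ off against that of $\bA_{\leq b}$ --- is the standard Quillen--Theorem~9.1-style strategy for statements of this shape, and the numerical identity $(t(b)-2)+(n-t(b)-1)+2=n-1$ is indeed the heart of the matter.

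The genuine gap is the central claim that the filtration on the homotopy colimit ``glues in, at stage $p$ and above each such $b$, cells whose shape is a join $N\bB_{<b}*N(\bA_{\leq b})$,'' together with the conclusion that the attachments therefore do not affect $\pi_i$ for $i\leq n-1$. This is not what happens, in either model. In $N\bG$, the closed star of the fiber $V_b=\{(a,b):a\in\bA_{\leq b}\}$ is the nerve of $V_b\cup U_b$, where $U_b$ consists of elements lying over $\bB_{<b}$ and above \emph{some} element of $V_b$; this nerve is a join only if every element of $V_b$ were comparable to every element of $U_b$, which fails because the poset of elements over $\bB_{<b}$ sitting above $(a,b)$ genuinely depends on $a$ (already for $\bB=\{b_0<b\}$ and $\bA$ a two-element antichain mapping to $b$ the star is two disjoint edges, while the join is connected). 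In the bar-construction model the new piece at $b$ is $N(\bA_{\leq b})\times CN\bB_{<b}$ attached along $N(\bA_{\leq b})\times N\bB_{<b}\rto\hocolim_{(\bB^\op)_{>b}}$, again not a join, and such an attachment certainly \emph{can} change $\pi_i$ in low degrees --- it must, since the corresponding attachment on the $\bB$-side (a cone on $N\bB_{<b}$) does. What is true, and what your argument needs, is a \emph{relative} statement: the map of pairs $\bigl(\hocolim_{(\bB^\op)_{\geq b}},\hocolim_{(\bB^\op)_{>b}}\bigr)\rto\bigl(CN\bB_{<b},N\bB_{<b}\bigr)$ is highly connected, the relative term on the source being $(N\bA_{\leq b}\times CN\bB_{<b})/(N\bA_{\leq b}\times N\bB_{<b})$; the join you wrote down appears only as the cofiber of this comparison, i.e.\ it measures the difference between the two filtrations rather than describing the attached cells. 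Establishing that relative connectivity, and then upgrading the resulting homological statement to the claimed isomorphisms on $\pi_i$ for $i\leq n-1$ (which needs van Kampen/Blakers--Massey or relative Hurewicz input, with care about empty or disconnected fibers, basepoints, and the degenerate range of $n$), is precisely the content of the cited theorem, so as written the proposal assumes the hard step. A smaller, fixable slip: Quillen's Theorem~A requires the comma posets of $\bG\rto\bA$ (not the literal fibers) to be contractible; here this does hold, e.g.\ via the section $a\mapsto(a,F(a))$, but the justification should be stated that way.
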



\begin{proof}[Proof of Theorem~\ref{thm:deorder}]
  First, we claim that the length of a
  longest chain in $\Decomp_A$ is the same as the length of a longest chain in
  $\Decomp_A^{ord}$. In one direction, note that any chain in $\Decomp_A^{ord}$ maps to a chain in
  $\Decomp_A$, so the length of a longest chain in $\Decomp_A$ is at least the length of a longest chain in $\Decomp_A^{ord}$. In the other direction, the elements in any chain
  in $\Decomp_A$ can be given an ordering such that all maps in the
  chain are order-preserving, so there is a chain in $\Decomp_A^{ord}$
  which maps to this chain under the forgetful functor. 

  Since the nerves of $\Decomp_A$ and $\Decomp_A^{ord}$ are $(\rk(A)-2)$-dimensional,
  they are both homotopy equivalent to a wedge of $(\rk(A)-2)$-spheres if and only if
  they are $(\rk(A)-3)$-connected.

  We proceed by induction on $\rk(A)$. For the initial cases, if $\rk(A)<2$ then $\Decomp^\circ_A$ and $(\Decomp_A^{ord})^\circ$ are empty since $A$ can not be decomposed into two non-initial objects. For the induction step, we will apply Lemma~\ref{lem:LvdK} with $n=\rk(A)-2$, $\bA = (\Decomp_A^{ord})^\circ$, $\bB = (\Decomp_A^\op)^\circ$ and $F$ being the forgetful functor
  $\Decomp_A^{ord} \rto \Decomp_A$. 
  For $p\in \Decomp^\circ_A$ we define $t(p) = \rk(A)-|p|$, with $|p|$ the number of elements in $p$. On the one hand, then $(\Decomp^\circ_A)^\op_{< p} = (\Decomp^\circ_A)_{> p}$ is given by
  all refinements of $p$ as decompositions of $A$.  If
  $p = \{A_1,\ldots,A_{|p|}\}$ then this is isomorphic to the join
  $\Decomp^\circ_{A_1} * \cdots * \Decomp^\circ_{A_n}$.  Since
  $\rk(A_j) < \rk(A)$ for all $j$, the induction hypothesis applies to each term.
  By induction, $(\Decomp^\circ_A)_{>p}$ is therefore a wedge of spheres of dimension
  \[-1 + \sum_{i=1}^{|p|} (\rk(A_i)-2+1) = \rk(A)-|p|-1 = t(p)-1,\] and is
  therefore $(t(p)-2)$-connected.  On the other hand, the poset $(\Decomp_A^{ord})^\circ_{\leq p}$
  is the poset of ordered partitions of the set $\{1,\ldots,|p|\}$ into at least
  two non-empty subsets, ordered by refinement.  This is the boundary of a
  permutahedron of order $|p|$, which is a $(|p|-2)$-sphere so is
  \[|p|-3 = (\rk(A)-2)- (\rk(A)-|p|)-1 = (\rk(A)-2) - t(p)-1\]
  connected.  Applying Lemma~\ref{lem:LvdK} gives the desired result.
\end{proof}

\begin{remark}Replacing \cite[Theorem 3.8]{mirzaii_vanderkallen} with the slightly stronger \cite[Corollary 2.2]{looijenga_vanderkallen}, we obtain further that $\widetilde{H}_{\rk(A)-2}(\Decomp^\circ_A) \rto \widetilde{H}_{\rk(A)-2}((\Decomp^{ord}_A)^\circ)$ is surjective.
\end{remark}

\subsection{Relation to \cite{galatius_kupers_randal-williams18}} Let us remark on the relationship of the previous result to \cite{galatius_kupers_randal-williams18} (see also \cite{rw-mo}); all references refer to this paper. If $\A$ is a convenient addition category with ranks then its groupoid core $i\A$ is a symmetric monoidal groupoid with rank functor that satisfies Assumptions 17.1 and 17.2. Section 17.1 associates to it a non-unital $E_\infty$-algebra $\mathbf{T}$ in the functor category $\mathrm{\Fun}(i\A,\mathrm{sSet})$ with Day convolution symmetric monoidal structure, so that $\mathbf{T}(\A) = \varnothing$ if $A \cong \varnothing$ and $\mathbf{T}(A) = \ast$ otherwise. This has derived $E_k$-indecomposables $Q^{E_k}_\mathbb{L}(\mathbf{T}) \in \mathrm{\Fun}(i\A,\mathrm{sSet}_*)$ for $k=1,2,\ldots,\infty$. By Section 17.5 and Section 17.4 these can be computed as 
\[Q^{E_1}_\mathbb{L}(\mathbf{T})(A) \simeq \Sigma' S^{E_1}(A) \qquad \text{and} \qquad Q^{E_\infty}_\mathbb{L}(\mathbf{T})(A) \simeq \Sigma' S^{E_\infty}(A),\] where $S^{E_1}(A)$ is the $E_1$-splitting complex of Definition 17.9 and $S^{E_\infty}(A)$ is the $E_\infty$-splitting complex of Definition 17.18. As $S^{E_1}(A)$ is equivalent to the geometric realisation of $(\Decomp^{ord}_A)^\circ$  and $S^{E_\infty}(A)$ is equivalent to the geometric realisation of $\Decomp_A^\circ$, interpreted in this language, Theorem~\ref{thm:deorder} above says that under the stated hypotheses $Q^{E_\infty}_\mathbb{L}(\mathbf{T})(A)$ is $(\rk(A)-2)$-connected if and only if $Q^{E_1}_\mathbb{L}(\mathbf{T})(A)$ is. This can also be deduced by ``transferring vanishing lines up'' using Theorem 14.4.

\bibliographystyle{alpha}
\bibliography{CZ}

\end{document}